\title{Analysis of a Time Multigrid Algorithm for DG-Discretizations in Time} 
\author{Martin J. Gander
\thanks{Section de Math\'{e}matiques
	2-4 rue du Li\`{e}vre, CP 64
	CH-1211 Gen\`{e}ve
(\email{martin.gander@unige.ch})}
\and
Martin Neum\"uller
\thanks{Inst. of Comp. Mathematics
        Altenbergerstr.~69
        4040 Linz
        Austria
(\email{martin.neumueller@jku.at})}
}
\newcommand{\IC}{\mathbb{C}}
\newcommand{\IN}{\mathbb{N}}
\newcommand{\IP}{\mathbb{P}}
\newcommand{\IR}{\mathbb{R}}
\newenvironment{variational}
{\begin{list}{}
             {\setlength{\leftmargin}{0.5cm}}
             \item[]
}
{\end{list}}
\newtheorem{remark}[theorem]{Remark}
\newcommand{\abs}[1]{\left\lvert{#1}\right\rvert}
\newcommand{\norm}[1]{{\left\lVert{#1}\right\rVert}} 
\newcommand{\spv}[2]{{\left({#1},{#2}\right)}}
\renewcommand{\i}{\mathbbm{i}}
\renewcommand{\vec}[1]{\bm{#1}}
\DeclareMathOperator*{\argsup}{argsup}
\DeclareMathOperator*{\arginf}{arginf}
\tikzset{external/system call={latex \tikzexternalcheckshellescape -halt-on-error
-interaction=batchmode -jobname "\image" "\texsource";
dvips -o "\image".ps "\image".dvi;
ps2eps "\image.ps"}}
\newcommand{\thickhline}{%
    \noalign {\ifnum 0=`}\fi \hrule height 1pt
    \futurelet \reserved@a \@xhline
}
\newcolumntype{"}{@{\hskip\tabcolsep\vrule width 1pt\hskip\tabcolsep}}
\begin{document}
\maketitle
\slugger{mms}{sinum}{xx}{x}{x--x}%slugger should be set to mms, siap, sicomp, sicon, sidma, sima, simax, sinum, siopt, sisc, or sirev

\begin{abstract}
  We present and analyze for a scalar linear evolution model problem a
  time multigrid algorithm for DG-discretizations in time. We derive
  asymptotically optimized parameters for the smoother, and also an
  asymptotically sharp convergence estimate for the two grid cycle.
  Our results hold for any A-stable time stepping scheme and represent
  the core component for space-time multigrid methods for parabolic
  partial differential equations.  Our time multigrid method has
  excellent strong and weak scaling properties for parallelization in
  time, which we show with numerical experiments.
\end{abstract}

\begin{keywords}
Time parallel methods, multigrid in time, DG-discretizations, RADAU IA
\end{keywords}

\begin{AMS}
65N55, 65L60, 65F10
\end{AMS}

\pagestyle{myheadings}
\thispagestyle{plain}
\markboth{}{}

\section{Introduction}

The parallelization of algorithms for evolution problems in the time
direction is currently an active area of research, because today's
supercomputers with their millions of cores can not be effectively
used any more when only parallelizing the spatial directions.  In
addition to multiple shooting and parareal
  \cite{Lions2001, gander2007analysis, gander:2008:nca}, domain
decomposition and waveform relaxation \cite{Gander:1998:STC,
  Gander:2003:OSWW, Gander:2007:OSW}, and
direct time parallel methods \cite{maday2008parallelization, gander2013paraexp,
  Gander:2014:ADS}, multigrid methods in time are the fourth main
approach that can be used to this effect, see the overview
\cite{Gander:2014:50Y} and references therein. The
parabolic multigrid method proposed by Hackbusch in
\cite{Hackbusch:1984:PMG} was the first multigrid method in
space-time. It uses a smoothing iteration (e.g. Gauss-Seidel) over
many time levels, but coarsening is in general only possible in space,
since time coarsening might lead to divergence of the algorithm. The
multigrid waveform relaxation method proposed by Lubich and Ostermann
in \cite{Lubich:1987:MGD} is defined by applying a standard multigrid
method to the Laplace transform in time of the evolution problem. This
leads after backtransform to smoothers of waveform relaxation
type. The first complete space-time multigrid method that also allowed
coarsening in time was proposed by Horten and Vandewalle in
\cite{horton1995space}. It uses adaptive semi-coarsening in space or
time and special prolongation operators only forward in time.  The
analysis is based on Fourier techniques, and fully mesh independent
convergence can be obtained for F-cycles.  More recently, Emmett and
Minion proposed the Parallel Full Approximation Scheme in Space-Time
(PFASST), which is a non-linear multigrid method using a spectral
deferred correction iteration as the smoother, see
\cite{emmett2012toward}. This method has been successfully tested on
real problems, see for example
\cite{speck2012massively,speck2013multi}, but there is no convergence
analysis so far. A further time multigrid method can be found in
\cite{Falgout:2014:PTI}.

We present and analyze in this paper a new multigrid method in time
based on a block Jacobi smoother and standard restriction and
prolongation operators in time. This algorithm appeared for the first
time in the PhD thesis \cite{NeumuellerThesis2013}. To focus only on
the time direction, we consider for $T>0$ the one-dimensional model
problem
\begin{equation}\label{chap4_ordinaryDGL}
\begin{aligned}
 	\partial_t u(t) + u(t) &=  f(t) &\quad &\text{for } t \in (0,T), \\
	u(0) &= u_0,
\end{aligned}
\end{equation}
where $u_0 \in \IR$ and $f : (0,T) \rightarrow \IR$ are some given
data. In Section \ref{DiscretizationSec}, we present a general
Discontinuous Galerkin (DG) time stepping scheme for
(\ref{chap4_ordinaryDGL}), originally introduced by Reed and Hill
\cite{Reed1973} for neutron transport, see also \cite{Lasaint1974} for
ODEs and \cite{Delfour1981}. We also show their relation to classical
A-stable time stepping methods. In Section \ref{MGSec}, we then
present our time multigrid method for the DG time stepping scheme.
Section \ref{FourierSec} contains a Fourier mode analysis, and
we determine asymptotically the best choice of the smoothing
parameter, and the associated contraction estimate for the two grid
method. We give in Section \ref{NumericalSec} numerical
results which show both the strong and weak scalability of our time
multigrid method. We give an outlook on the applicability of our time
multigrid method to parabolic PDEs in Section \ref{ConclusionSec}.

\section{Discretization}\label{DiscretizationSec}

We divide the time interval $[0,T]$ into $N \in \IN$ uniform
subintervals $0 = t_0 < t_1 < \cdots < t_{N-1} < t_N = T$, $t_n = n
\,\tau$, with time step $\tau = \frac{T}{N}$, see Figure
\ref{chap4_timeSteppingScheme}. 
\begin{figure}%[htpb]
\includegraphics{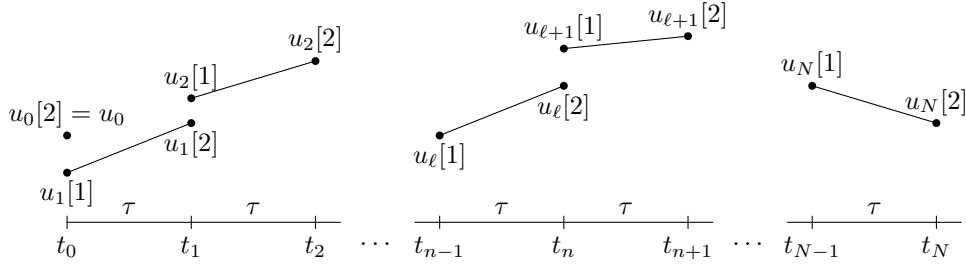}
\caption{DG time stepping scheme for $p_t=1$.}\label{chap4_timeSteppingScheme}
\end{figure}
By introducing the continuity condition $u_\tau^n(t_{n-1}) =
u_\tau^{n-1}(t_{n-1})$ in a weak sense, we obtain for $(t_{n-1},t_n)$
the discrete variational problem
\begin{variational}
  Find $u_\tau^n \in \IP^{p_t}(t_{n-1},t_n)$ such that for all 
  $v_\tau^n \in \IP^{p_t}(t_{n-1},t_n)$
  \begin{equation}\label{chap4_discreteVarFormODE}
    \begin{aligned}
     &-\int_{t_{n-1}}^{t_{n}} u_\tau^n(t) \partial_t v_\tau^n(t) \mathrm{d}t
  + u_\tau^n({t_{n}}) v_\tau^n({t_{n}}) + \int_{t_{n-1}}^{t_{n}} u_\tau^n(t) v_\tau^n(t)
  \mathrm{d}t \\
  &\qquad= \int_{t_{n-1}}^{t_{n}} f(t) v_\tau^n(t) \mathrm{d}t + 
  u_\tau^{n-1}({t_{n-1}}) v_\tau^n({t_{n-1}}).
    \end{aligned}
  \end{equation}
\end{variational}

%%%%%%%%%%%%%%%%%%%%%%%%%%%%%%%%%%%%%%%%%%%%%%%%%%%%%%%%%%%%%%%%%%%%%%%%%%%%%%%%
\subsection{Linear system}
%%%%%%%%%%%%%%%%%%%%%%%%%%%%%%%%%%%%%%%%%%%%%%%%%%%%%%%%%%%%%%%%%%%%%%%%%%%%%%%%

Using the basis functions
\begin{align}\label{chap4_labelBasisFunctions}
\IP^{p_t}(t_{n-1},t_n) = \mathrm{span}\{ \psi_\ell^n \}_{\ell=1}^{N_t}, \qquad
N_t = p_t+1,
\end{align}
the discrete variational problem (\ref{chap4_discreteVarFormODE}) is equivalent
to the linear system
\begin{align*}
   \left[ K_\tau + M_\tau\right] \vec{u}_n = \vec{f}_n + N_{\tau} \vec{u}_{n-1},
\end{align*}
with the matrices
\begin{align*}
 K_\tau[k,\ell] &:=  -\int_{t_{n-1}}^{t_{n}} \psi_\ell^n(t) \partial_t \psi_k^n(t) \mathrm{d}t
  + \psi_\ell^n({t_{n}}) \psi_k^n({t_{n}}),\\
 M_\tau[k,\ell] &:= \int_{t_{n-1}}^{t_{n}} \psi_\ell^n(t) \psi_k^n(t)
  \mathrm{d}t, \qquad N_\tau[k,\ell] := \psi_\ell^{n-1}({t_{n-1}}) \psi_k^n({t_{n-1}})
\end{align*}
for $k, \ell = 1,\ldots,N_t$. The right hand sides are given by
\begin{align*}
  \vec{f}_{n}[\ell] := \int_{t_{n-1}}^{t_{n}} f(t) \psi_\ell^n(t) \mathrm{d}t,
  \qquad \ell = 1,\ldots,N_t.
\end{align*}
On the time interval  $[t_{n-1},t_n]$ we then have the approximation
$u_\tau^n(t) = \sum_{\ell = 1}^{N_t} u_n[\ell] \psi_\ell^n(t)$, and
for the coefficients, we have to  solve the block triangular linear system
\begin{equation}\label{chap4_generalLinearSystemODE}
  \begin{pmatrix}
    K_\tau + M_\tau & & & \\
    -N_{\tau} & K_\tau + M_\tau & & \\
     &\ddots & \ddots & & \\
    & & -N_{\tau} & K_\tau + M_\tau
  \end{pmatrix}
  \begin{pmatrix}
    \vec{u}_1 \\
    \vec{u}_2 \\
    \vdots \\
    \vec{u}_N \\
  \end{pmatrix} = 
  \begin{pmatrix}
    \vec{f}_1 \\
    \vec{f}_2 \\
    \vdots \\
    \vec{f}_N \\
  \end{pmatrix}.
\end{equation}
Using for example constant polynomials, we simply obtain the backward
Euler scheme.

\subsection{General properties}

To study the properties of the discontinuous Galerkin discretization
(\ref{chap4_discreteVarFormODE}), we consider for a function $f:
(t_{n-1},t_n) \rightarrow \IR$ the Radau quadrature rule of order $2s-1$,
\begin{align*}
\int_{t_{n-1}}^{t_n} f(t) \mathrm{d}t \approx \tau \sum_{k=1}^s b_k f(t_{n-1} + c_k \tau),
\end{align*}
with the weights $b_k \in \IR_+$ and the integration points $c_1 = 0$ and
$c_2,\ldots,c_s \in [0,1]$, see~\cite{Hairer2010}.
\begin{theorem}\label{chap4_theorem1a}
  The discontinuous Galerkin approximation
  (\ref{chap4_discreteVarFormODE}) of the model problem
  (\ref{chap4_ordinaryDGL}) is equivalent to the $(p_t+1)$-stage
  implicit Runge-Kutta scheme RADAU IA, if the integral of the right
  hand side is approximated by the Radau quadrature of order $2
  p_t+1$.
\end{theorem}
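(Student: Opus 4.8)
The plan is to rewrite the variational identity (\ref{chap4_discreteVarFormODE}) as a perturbed collocation condition and to read off the resulting Butcher tableau. First I would integrate the term $-\int_{t_{n-1}}^{t_n} u_\tau^n \partial_t v_\tau^n\,\mathrm{d}t$ by parts: the boundary contribution $-u_\tau^n(t_n)v_\tau^n(t_n) + u_\tau^n(t_{n-1})v_\tau^n(t_{n-1})$ cancels the outflow term $u_\tau^n(t_n)v_\tau^n(t_n)$ already present, leaving the strong-plus-jump form
\begin{equation*}
\int_{t_{n-1}}^{t_n}\bigl(\partial_t u_\tau^n + u_\tau^n\bigr)v_\tau^n\,\mathrm{d}t + \bigl[u_\tau^n(t_{n-1}) - u_\tau^{n-1}(t_{n-1})\bigr]v_\tau^n(t_{n-1}) = \int_{t_{n-1}}^{t_n} f\,v_\tau^n\,\mathrm{d}t .
\end{equation*}
Every integrand on the left is a polynomial of degree at most $2p_t$, so the Radau rule of order $2p_t+1$ is exact on it; replacing \emph{all} integrals by that quadrature therefore alters only the right-hand side, which is exactly the approximation assumed in the statement.

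Next I would test with the nodal Lagrange basis associated with the quadrature nodes $\tau_k = t_{n-1}+c_k\tau$, $k=1,\dots,s$, where $s=p_t+1$. Because $c_1=0$, the first node $\tau_1$ coincides with the left endpoint $t_{n-1}$ carrying the jump. For the basis functions vanishing at $t_{n-1}$ (indices $k=2,\dots,s$) the positive weights $b_k$ cancel and one recovers the exact collocation conditions $\partial_t u_\tau^n(\tau_k)+u_\tau^n(\tau_k)=f(\tau_k)$; for the remaining basis function one obtains a single condition at $\tau_1$ in which the jump $u_\tau^n(t_{n-1})-u_\tau^{n-1}(t_{n-1})$ enters with factor $1/(\tau b_1)$. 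Writing $U_i:=u_\tau^n(\tau_i)$ and stage derivatives $k_i:=f(\tau_i)-U_i=\partial_t u_\tau^n(\tau_i)$ (for $i\ge2$), these are precisely the internal stages of a Runge--Kutta step with $u_{n-1}=u_\tau^{n-1}(t_{n-1})$.

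Since $\partial_t u_\tau^n$ has degree $p_t-1=s-2$, it is determined by its $s-1$ values at the nonzero nodes $c_2,\dots,c_s$; integrating it and fixing the integration constant $U_1$ through the modified condition at $\tau_1$ expresses each stage as $U_i = u_{n-1}+\tau\sum_{j=1}^s a_{ij}k_j$. A short computation yields a constant first column $a_{i1}=b_1$ (originating entirely from the jump at $c_1=0$) and $a_{ij}=\int_0^{c_i}\tilde L_j(\theta)\,\mathrm{d}\theta - b_1\tilde L_j(0)$ for $j\ge2$, with $\tilde L_j$ the degree-$(s-2)$ Lagrange polynomials on $c_2,\dots,c_s$; evaluating at $t_n$ gives the update $u_n=u_{n-1}+\tau\sum_j b_j k_j$. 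By construction the $c_j,b_j$ are the nodes and weights of the left Radau rule.

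It then remains to identify this tableau as \textsc{Radau IA}. I would verify the simplifying conditions $B(2p_t+1)$ and $D(s)$ that characterize \textsc{Radau IA} among the $s$-stage schemes with left Radau nodes \cite{Hairer2010}: $B(2p_t+1)$ is simply the exactness of the Radau quadrature, while $D(s)$ has to be read off from the explicit $a_{ij}$ above. The main obstacle is this last verification — equivalently, confirming the output relation $\int_0^1\tilde L_j-b_1\tilde L_j(0)=b_j$ together with $D(s)$ — which reduces to polynomial-integration identities linking the Radau weights with the Lagrange basis on the nonzero nodes. The structural reason the scheme is \textsc{Radau IA} rather than \textsc{Radau IIA} is precisely the constant first column $a_{i1}=b_1$ forced by the jump sitting at the first node $c_1=0$; a direct match against the tabulated coefficients (for instance $c=(0,\tfrac23)$, $b=(\tfrac14,\tfrac34)$, $a_{i1}=\tfrac14$ at $s=2$) confirms the identification in low order and guides the general computation.
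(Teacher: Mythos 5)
Your reduction to the jump--collocation form is exactly the paper's: integrate by parts, observe that the left-hand integrands have degree at most $2p_t$ so the Radau quadrature of order $2p_t+1$ is exact on them, and test with the nodal Lagrange basis at the quadrature points to obtain the collocation conditions $\partial_t u_\tau^n(\tau_k)+u_\tau^n(\tau_k)=f(\tau_k)$ for $k\ge 2$ together with the jump relation $u_\tau^n(t_{n-1})-u_\tau^{n-1}(t_{n-1})=\tau b_1\bigl[f-\partial_t u_\tau^n-u_\tau^n\bigr](t_{n-1})$. Up to that point the two arguments coincide; you should also note, as the paper does, that the equivalence claim requires the converse direction, which follows by reversing the steps since the Lagrange polynomials span the test space. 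Where you genuinely diverge is the last step: the paper stops at the jump--collocation form, recognizes it as the \emph{discontinuous collocation method} of Hairer and Wanner, and cites their theorem that this method with left Radau nodes and jump weight $b_1$ \emph{is} the $(p_t+1)$-stage RADAU~IA scheme; you instead try to derive the Butcher tableau explicitly and identify it via the simplifying conditions. Your tableau is in fact correct --- since the quadrature is exact for degree $\le 2p_t$, one has $\int_0^1\tilde L_j = b_1\tilde L_j(0)+b_j$, which gives the output weights $b_j$ for free, and $a_{i1}=b_1$, $a_{ij}=\int_0^{c_i}\tilde L_j - b_1\tilde L_j(0)$ reproduces the RADAU~IA matrix (as your $s=2$ check confirms). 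But the decisive verification that this $A$ satisfies $D(s)$, which together with $B(2s-1)$ pins down RADAU~IA uniquely, is exactly the step you flag as ``the main obstacle'' and leave undone; it is a standard but nontrivial polynomial-integration computation, and it is precisely the content of the Hairer--Wanner result the paper outsources to a citation. So your route buys a self-contained proof at the cost of this one unfinished identity, while the paper's route buys brevity at the cost of relying on the literature; to make your version complete you must either carry out the $D(s)$ verification or, equivalently, fall back on citing the known equivalence of discontinuous collocation with RADAU~IA.
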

\begin{proof}
 With this approximation, and using integration by parts, we obtain
 from (\ref{chap4_discreteVarFormODE}) the variational problem
\begin{variational}
  Find $u_\tau^n \in \IP^{p_t}(t_{n-1},t_n)$ such that for all 
  $v_\tau^n \in \IP^{p_t}(t_{n-1},t_n)$
  \begin{equation}\label{chap4_discreteVarForm2}
    \begin{aligned}
     &\int_{t_{n-1}}^{t_{n}} \partial_t u_\tau^n(t) v_\tau^n(t) \mathrm{d}t
  +  u_\tau^n({t_{n-1}}) v_\tau^n({t_{n-1}}) + \int_{t_{n-1}}^{t_{n}} u_\tau^n(t) v_\tau^n(t)
  \mathrm{d}t \\
  &\qquad= \tau \sum_{k=1}^{p_t+1} b_k f(t_{n-1} +c_k\tau) v_\tau^n(t_{n-1}+c_k
   \tau) +  u_\tau^{n-1}({t_{n-1}}) v_\tau^n({t_{n-1}}).
    \end{aligned}
  \end{equation}
\end{variational}
 The idea of the proof is to apply the discontinuous collocation
 method introduced in \cite{Hairer2010a} to the model problem
 (\ref{chap4_ordinaryDGL}). Let $c_1 = 0$ and $c_2,\ldots,c_{p_t+1}
 \in [0,1]$ be the integration points of the Radau quadrature of order
 $2 p_t+1$ with the weights $b_1,\ldots,b_{p_t+1} \in \IR \setminus
 \{0\}$. Then the discontinuous collocation method is given by
 \begin{variational}
   Find $w_\tau^n \in \IP^{p_t}(t_{n-1},t_n)$ such that for all $k =
   2,\ldots,p_t+1$
   \begin{equation}\label{chap4_discontinuousCollocation}
     \begin{aligned}
        w_\tau^n(t_{n-1}) - w_\tau^{n-1}(t_{n-1}) &=\tau b_1
        \left[ f(t_{n-1}) - \partial_t w_\tau^n(t_{n-1}) - w_\tau^n(t_{n-1}) \right],\\
        \partial_t w_\tau^n(t_{n-1}+c_k \tau) + w_\tau^n(t_{n-1}+c_k \tau) &= f(t_{n-1}+c_k\tau).
      \end{aligned}
   \end{equation}
 \end{variational}
 In \cite{Hairer2010a} it was shown that the discontinuous collocation
 method (\ref{chap4_discontinuousCollocation}) is equivalent to the
 $(p_t+1)$-stage implicit Runge-Kutta scheme RADAU IA. Hence it
 remains to show the equivalence of the discontinuous Galerkin method
 (\ref{chap4_discreteVarForm2}) to the discontinuous collocation
 method (\ref{chap4_discontinuousCollocation}). First, we observe that
 $\partial_t u_\tau^n v_\tau^n$ and $u_\tau^n v_\tau^n$ are
 polynomials of degree at most $2 p_t$. Therefore we can replace the
 integrals on the left hand side of (\ref{chap4_discreteVarForm2})
 with the Radau quadrature of order $2 p_t+1$, and obtain
 \begin{equation}\label{chap4_equationWithQuadrature}
 \begin{aligned}
   \tau \sum_{k=1}^{p_t+1} b_k \partial_t & u_\tau^n(t_{n-1}+c_k \tau)
   v_\tau^n(t_{n-1}+c_k \tau) + u_\tau^n(t_{n-1}) v_\tau^n(t_{n-1})\\
   &\qquad\qquad\qquad+ \tau \sum_{k=1}^{p_t+1} b_k u_\tau^n(t_{n-1}+c_k \tau)
   v_\tau^n(t_{n-1}+c_k \tau)\\
   &= \tau \sum_{k=1}^{p_t+1} b_k f(t_{n-1} +c_k\tau) v_\tau^n(t_{n-1}+c_k
   \tau) + u_\tau^{n-1}(t_{n-1}) v_\tau^n(t_{n-1}),
 \end{aligned}
 \end{equation}
 with $v_\tau^n \in \IP^{p_t}(t_{n-1},t_n)$. As test functions $v_\tau^n$ we
 consider the Lagrange polynomials
 \[ \ell_i^n(t) = \prod_{\stackrel{j=1}{j \neq i}}^{p_t+1} \frac{t-(t_{n-1}+c_j
   \tau)}{\tau(c_i-c_j)} \qquad \text{for } i = 1,\ldots,p_t+1. \]
 Hence we have $\ell_i^n(t_{n-1}+c_j\tau) = 0$ for $i \neq j$ and $
 \ell_i^n(t_{n-1}+c_i\tau) = 1$ for $i=1,\ldots,p_t+1$. First we use the test
 function $v_\tau^n = \ell_1^n$ in (\ref{chap4_equationWithQuadrature}) and
 obtain
  \begin{align*}
   \tau b_1 \partial_t u_\tau^n(t_{n-1}) + u_\tau^n(t_{n-1}) + \tau b_1
   u_\tau^n(t_{n-1}) = \tau b_1 f(t_{n-1}) + u_\tau^{n-1}(t_{n-1}).
 \end{align*}
 This implies that the solution $u_\tau^n$ of (\ref{chap4_discreteVarForm2})
 satisfies the first equation of (\ref{chap4_discontinuousCollocation}). For
 the test function  $v_\tau^n = \ell_k^n$, $k = 2,\ldots,p_t+1$ we further get
 \begin{align*}
   \tau b_k \partial_t u_\tau^n(t_{n-1}+c_k \tau) + \tau b_k
   u_\tau^n(t_{n-1}+c_k \tau) = \tau b_k f(t_{n-1} +c_k\tau).
 \end{align*}
 Dividing this equation by $\tau b_k \neq 0$ we see that the solution
 $u_\tau^n$ of the discontinuous Galerkin scheme
 (\ref{chap4_discreteVarForm2}) also satisfies the second equation of
 the discontinuous collocation method
 (\ref{chap4_discontinuousCollocation}). Hence the solution $u_\tau^n$
 of the discontinuous Galerkin scheme (\ref{chap4_discreteVarForm2})
 is a solution of the discontinuous collocation method
 (\ref{chap4_discontinuousCollocation}). The converse is proved by
 reverting the arguments.
\end{proof}

The RADAU IA scheme has been introduced in the PhD thesis
\cite{Ehle1969} in 1969, see also \cite{Chipman1971}.  From the proof
of Theorem \ref{chap4_theorem1a} we see that the jump of the discrete
solution at time $t_{n-1}$ is equal to the point wise error multiplied
by the time step size $\tau$ and the weight $b_1$, see
(\ref{chap4_discontinuousCollocation}). Hence the height of the jump
can be used as a simple error estimator for adaptive time stepping.

\begin{theorem}\label{chap4_theorem2a}
  For $s \in \IN$, the $s$-stage RADAU IA scheme is of order $2s-1$ and the
  stability function $R(z)$ is given by the $(s-1,s)$ subdiagonal Pad\'{e}
  approximation of the exponential function $e^z$. Furthermore the method is
  A-stable, i.e.
  \[ \abs{R(z)} < 1 \qquad \text{for } z \in \IC \text{ with }\Re(z) < 0. \]
\end{theorem}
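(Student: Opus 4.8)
The plan is to establish the three assertions — order $2s-1$, the subdiagonal $(s-1,s)$ Padé form of $R(z)$, and A-stability — by combining the classical simplifying order conditions for Runge--Kutta methods with the uniqueness of Padé approximation, and then treating the stability estimate separately. First I would recall the simplifying assumptions $B(p)$, $C(\eta)$, $D(\zeta)$ of Butcher. The RADAU IA scheme is built on the Radau quadrature with nodes $c_1 = 0, c_2,\ldots,c_s$, which has order $2s-1$, so that $B(2s-1)$ holds; its defining structure moreover yields $C(s-1)$ and $D(s)$. Applying the order theorem from \cite{Hairer2010} with $\eta = s-1$, $\zeta = s$, $p = 2s-1$, and checking the compatibility conditions $p \le \eta + \zeta + 1 = 2s$ and $p \le 2\eta + 2 = 2s$, gives order $2s-1$.

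Second, for the stability function I would write $R(z) = 1 + z\,\vec{b}^\top (I - zA)^{-1}\vec{1} = \det(I - zA + z\vec{1}\vec{b}^\top)/\det(I - zA)$, so that $R$ is rational with denominator degree at most $s$. The order $2s-1$ translates directly into $R(z) - e^z = O(z^{2s})$. To pin the numerator degree down to $s-1$, I would show $R(\infty) = 1 - \vec{b}^\top A^{-1}\vec{1} = 0$; for RADAU IA the first column of $A$ equals $b_1\vec{1}$, whence $A^{-1}\vec{1} = b_1^{-1}\vec{e}_1$ and $\vec{b}^\top A^{-1}\vec{1} = 1$. A rational function with numerator degree $\le s-1$, denominator degree $\le s$, agreeing with $e^z$ to order $z^{2s}$ is, by the uniqueness of the Padé approximant, precisely the $(s-1,s)$ entry of the Padé table.

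Third, for A-stability I would introduce the E-polynomial $E(y) := \abs{Q(\i y)}^2 - \abs{P(\i y)}^2$, where $R = P/Q$. Since the poles of the subdiagonal Padé approximant of $e^z$ all lie in the right half-plane, $R$ is analytic for $\Re(z) \le 0$, so by the maximum modulus principle it suffices to verify $\abs{R(\i y)} \le 1$ on the imaginary axis, i.e.\ $E(y) \ge 0$ for real $y$. Using $R(z) - e^z = O(z^{2s})$ together with $\overline{R(\i y)} = R(-\i y)$, one finds $\abs{R(\i y)}^2 = 1 + O(y^{2s})$, so that $E$ is an even polynomial vanishing to order $2s$ at the origin; computing the sign of its leading coefficient then yields $E(y) \ge 0$ for all real $y$. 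The strict inequality $\abs{R(z)} < 1$ for $\Re(z) < 0$ follows since $R$ is nonconstant and the maximum of $\abs{R}$ over the closed left half-plane is attained on the boundary.

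The main obstacle will be this last step: verifying nonnegativity of the E-polynomial for every $s$ is the delicate, classical part of the argument (the result of Birkhoff--Varga and Ehle on the A-stability of subdiagonal Padé approximants of $e^z$, compare \cite{Ehle1969}), whereas the order count and the degree bounds on $R$ are essentially mechanical once the simplifying conditions and the invertibility of $A$ are in hand.
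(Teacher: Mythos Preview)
The paper does not actually prove this theorem: its entire proof is the sentence ``The proof can be found in \cite{Hairer2010}.'' Your outline is essentially the standard argument one finds in that reference (Hairer--Wanner), and it is correct. In fact your E-polynomial step is cleaner than you give it credit for: since $\deg P \le s-1$ and $\deg Q = s$, the polynomial $E(y)=\abs{Q(\i y)}^2-\abs{P(\i y)}^2$ has degree exactly $2s$ with strictly positive leading coefficient, and your observation $\abs{R(\i y)}^2 = 1 + O(y^{2s})$ forces $E$ to vanish to order $2s$ at the origin, so $E(y)$ is a positive multiple of $y^{2s}$ --- no further sign analysis is needed. The only genuinely nontrivial ingredient you invoke without proof is that the poles of the $(s-1,s)$ Pad\'e approximant lie in the open right half-plane (needed for the maximum-modulus reduction to the imaginary axis); this is the classical result you attribute to Birkhoff--Varga and Ehle, and citing it is appropriate here given that the paper itself defers the whole theorem to the literature.
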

\begin{proof}
  The proof can be found in \cite{Hairer2010}.
\end{proof}

%\begin{remark}
%  For $s \in \IN$, the first $(s-1,s)$ subdiagonal Pad\'{e} approximations
%  $P_{s-1,s}(z)$ of the exponential function $e^z$ are given by
%  \begin{alignat*}{2}
%     P_{0,1}(z) &= \frac{1}{1-z}, &\qquad  P_{1,2}(z) &= \frac{6 + 2 z}{6 -4z +
%       z^2}, \\ 
%     P_{2,3}(z) &= \frac{60+24z+3z^2}{60-36z+9z^2-z^3}, &\quad  P_{3,4}(z) &=
%     \frac{840+360z+60z^2+4z^3}{840 -480z + 120z^2-16z^3+z^4}. 
%  \end{alignat*}
%\end{remark}

\begin{corollary}\label{chap4_corollary1}
  The stability function $R(z)$ of the discontinuous Galerkin
  approximation with polynomial degree $p_t \in \IN$ is given by the
  $(p_t,p_t+1)$ subdiagonal Pad\'{e} approximation of the exponential function
  $e^z$. Furthermore the method is A-stable, i.e.
  \[ \abs{R(z)} < 1 \qquad \text{for } z \in \IC \text{ with }\Re(z) < 0. \]
\end{corollary}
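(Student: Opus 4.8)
The plan is to obtain the corollary directly by combining Theorem \ref{chap4_theorem1a} with Theorem \ref{chap4_theorem2a} through the index substitution $s = p_t + 1$. First I would invoke Theorem \ref{chap4_theorem1a} to identify the discontinuous Galerkin discretization of polynomial degree $p_t$ with the $(p_t+1)$-stage RADAU IA scheme. Setting $s = p_t + 1$, this is exactly the $s$-stage RADAU IA method to which Theorem \ref{chap4_theorem2a} applies, so all that remains is to transport its conclusions back through the equivalence.

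The one point that genuinely needs care is that the stability function is an invariant of this equivalence. Recall that $R(z)$ is defined by applying the one-step method to the scalar Dahlquist test equation $u'(t) = \lambda u(t)$ and reading off the propagation factor $z = \lambda\tau$. The statement of Theorem \ref{chap4_theorem1a} is phrased for the fixed model problem (\ref{chap4_ordinaryDGL}) with reaction coefficient $+1$, whereas $R$ is defined for a general coefficient $\lambda \in \IC$. I would therefore observe that the proof of Theorem \ref{chap4_theorem1a} is purely structural: it uses only that the Radau quadrature of order $2p_t+1$ integrates the arising polynomials of degree at most $2p_t$ exactly, together with testing against the nodal Lagrange basis. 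Neither ingredient uses the specific value of the zeroth-order coefficient, so the argument carries over verbatim when the term $+u$ is replaced by $-\lambda u$. Consequently the DG scheme and the $(p_t+1)$-stage RADAU IA scheme produce identical approximations on the test equation and hence share the same stability function $R(z)$.

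With this in hand, the conclusions follow by bookkeeping. By Theorem \ref{chap4_theorem2a} the $s$-stage RADAU IA stability function is the $(s-1,s)$ subdiagonal Pad\'e approximation of $e^z$; substituting $s = p_t+1$ yields the $(p_t, p_t+1)$ subdiagonal Pad\'e approximation, which is the first assertion. The A-stability estimate $\abs{R(z)} < 1$ for $z \in \IC$ with $\Re(z) < 0$ transfers in precisely the same manner, since it is a property of the (now identified) stability function alone.

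The main obstacle, and indeed the only step beyond routine substitution, is the second one: justifying that $R(z)$ defined through the Dahlquist test equation is preserved under an equivalence whose statement is tied to the particular model problem (\ref{chap4_ordinaryDGL}). Once one notes that the equivalence proof is insensitive to the reaction coefficient, the remainder is an immediate translation of stage count into polynomial degree.
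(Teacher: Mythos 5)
Your proposal is correct and follows essentially the same route as the paper: the paper's proof likewise applies Theorem \ref{chap4_theorem1a} to the Dahlquist test equation to conclude that the DG scheme and the $(p_t+1)$-stage RADAU IA method share the same stability function, and then invokes Theorem \ref{chap4_theorem2a}. Your explicit remark that the equivalence argument is insensitive to the reaction coefficient is a point the paper leaves implicit, but it is the same argument.
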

\begin{proof}
  For the Dahlquist test equation $\partial_t u = \lambda u, \lambda \in \IC$
  we obtain by Theorem \ref{chap4_theorem1a} that the discontinuous Galerkin
  scheme is equivalent to the RADAU IA method. Hence the two methods have the
  same stability function $R(z)$. Applying Theorem \ref{chap4_theorem2a}
  completes the proof.
\end{proof}

%%%%%%%%%%%%%%%%%%%%%%%%%%%%%%%%%%%%%%%%%%%%%%%%%%%%%%%%%%%%%%%%%%%%%%%%%%%%%%%%
%%%%%%%%%%%%%%%%%%%%%%%%%%%%%%%%%%%%%%%%%%%%%%%%%%%%%%%%%%%%%%%%%%%%%%%%%%%%%%%%
\section{Multigrid method}\label{MGSec}
%%%%%%%%%%%%%%%%%%%%%%%%%%%%%%%%%%%%%%%%%%%%%%%%%%%%%%%%%%%%%%%%%%%%%%%%%%%%%%%%
%%%%%%%%%%%%%%%%%%%%%%%%%%%%%%%%%%%%%%%%%%%%%%%%%%%%%%%%%%%%%%%%%%%%%%%%%%%%%%%%

To apply multigrid to (\ref{chap4_generalLinearSystemODE}), we write
the linear system (\ref{chap4_generalLinearSystemODE}) using Kronecker
products,
\begin{align}\label{chap4_equationLinearSystemODE}
\left[ I_N \otimes (K_\tau + M_\tau) + U_N \otimes N_\tau \right] \vec{u} =:
\mathcal{L}_\tau \, \vec{u} = \vec{f},
\end{align}
with the matrix
\begin{align}
  U_N := \begin{pmatrix}
    0 & & & & \\
    -1 & 0 & & & \\
     & \ddots & \ddots & \\
     & & -1 & 0
  \end{pmatrix} \in \IR^{N \times N}.
\end{align}
% \marginpar{Ich stelle einfach die groben Freiheitsgrade \"{u}ber die feinen dar
% und diese Transformation ergibt dann die Prolongation bzw. Restriktion - ist das auch
% mit linearer Interpolation gemeint, wenn man h\"{0}here Polynomgrade verwendet? Ich habe
% deshalb die Referenzen der exakten Definitionen angegeben.}
We assume a nested sequence of decompositions
$\mathcal{T}_{N_L}$ with time step $\tau_L$ for $L
=0,\ldots,M_L$. We use standard restriction and
prolongation operators $\mathcal{R}^L$ and $\mathcal{P}^L$, see
(\ref{chap4_restrictionTime}) and (\ref{chap4_prolongationTime}) and 
$\nu \in \IN$ steps of a damped block Jacobi smoother 
\begin{align}\label{chap4_ODESmoother}
\quad\vec{u}^{\nu+1} = \vec{u}^\nu + \omega_t D_{\tau_L}^{-1} \left[
  \vec{f} -  \mathcal{L}_{\tau_L}\, \vec{u}^{\nu} \right],\quad \omega_t \in (0,2)
%=:\mathcal{S}_{\tau_L}^{1}(\vec{x}^k, \vec{f}),
\end{align}
with block diagonal matrix $D_{\tau_L} := \mathrm{diag}\{ K_{\tau_L} +
M_{\tau_L} \}_{n=1}^{N_L}$. For a given time step size $\tau_L$,
the error of the $\nu+1$st Jacobi iteration for $\nu \in \IN_0$ is
given by 
\begin{align}\label{chap4_errorSmoother}
  \vec{u} - \vec{u}^{\nu+1} =: \vec{e}^{\nu+1} = \left[I - \omega_t
    D_{\tau_L}^{-1} \mathcal{L}_{\tau_L} \right]^\nu \vec{e}^{\nu} =:
  \mathcal{S}_{\tau_L}^\nu \vec{e}^{\nu}.
\end{align}
The $k+1$st error of the two-grid cycle is given by
\begin{align}\label{chap4_errorTwoGridCycle}
  \vec{u} - \vec{u}^{k+1} =: \vec{e}^{k+1} = \mathcal{S}_{\tau_L}^{\nu_2}
  \left[ I - \mathcal{P}^L \mathcal{L}_{2 \tau_L}^{-1} \mathcal{R}^L
    \mathcal{L}_{\tau_L} \right] \mathcal{S}_{\tau_L}^{\nu_1} \vec{e}^k =:
  \mathcal{M}_{\tau_L} \vec{e}^k,
\end{align}
where we use the same symbol $\vec{e}$ also for this error to keep the notation simple.
To ensure asymptotically mesh independent convergence of the two-grid
cycle we need that the spectral radius of the iteration matrix
$\mathcal{M}_{\tau_L}$ is smaller than one, i.e.
\begin{align*}
  \varrho(\mathcal{M}_{\tau_L}) \leq q < 1,
\end{align*}
with a constant $q$ independent of the time step size. The computation
of the spectral radius for arbitrary two-grid iteration matrices is in
general not trivial, because the inverse of the coarse grid operator
$\mathcal{L}_{2\tau_L}$ is involved. We therefore transform the
equation $(\ref{chap4_equationLinearSystemODE})$ into the frequency
domain, where we apply the analysis based on exponential Fourier
modes. This type of analysis was introduced in \cite{Brandt1977}, and
can be made rigorous for model problems with periodic boundary
conditions, see \cite{Brandt1994}, and also \cite{Stueben1985,
  Wesseling2004, Trottenberg2001}. For general boundary conditions,
one can in general only get some insight into the local behavior of the
two-grid algorithm, and the method is called Fourier mode
analysis. In our case, we will see however that the Fourier mode analysis
gives parameter and contraction estimates of excellent quality.

For periodic boundary conditions the problem (\ref{chap4_ordinaryDGL})
changes to
\begin{align}\label{chap4_periodicBC}
  \partial_t u(t) + u(t) = f(t) \quad \text{for } t \in (0,t), \qquad u(0) =
  u(T).
\end{align}
For the discretization of the problem (\ref{chap4_periodicBC}) with a
discontinuous Galerkin time stepping method we therefore
have to solve a modified linear system
(\ref{chap4_equationLinearSystemODE}), i.e.
\begin{align}\label{chap4_equationLinearSystemODEPeriodic}
 \left[ I_N \otimes (K_\tau + M_\tau) + \tilde{U}_N \otimes N_\tau \right]
\vec{u} =: \tilde{\mathcal{L}}_\tau \, \vec{u} = \vec{f},
\end{align}
where the matrix $\tilde{U}_N$ is given by the circulant matrix
\begin{align}\label{chap4_circulantMatrix}
 \tilde{U}_N := \begin{pmatrix}
    0 & & & & -1 \\
    -1 & 0 & & & \\
     & \ddots & \ddots & \\
     & & -1 & 0
  \end{pmatrix} \in \IR^{N \times N}.
\end{align}

%%%%%%%%%%%%%%%%%%%%%%%%%%%%%%%%%%%%%%%%%%%%%%%%%%%%%%%%%%%%%%%%%%%%%%%%%%%%%%%%
%%%%%%%%%%%%%%%%%%%%%%%%%%%%%%%%%%%%%%%%%%%%%%%%%%%%%%%%%%%%%%%%%%%%%%%%%%%%%%%%
\section{Fourier mode analysis}\label{FourierSec}
%%%%%%%%%%%%%%%%%%%%%%%%%%%%%%%%%%%%%%%%%%%%%%%%%%%%%%%%%%%%%%%%%%%%%%%%%%%%%%%%
%%%%%%%%%%%%%%%%%%%%%%%%%%%%%%%%%%%%%%%%%%%%%%%%%%%%%%%%%%%%%%%%%%%%%%%%%%%%%%%%
We now use Fourier mode analysis to study the behavior of the
block Jacobi smoother and the two-grid cycle.
\begin{theorem}[Discrete Fourier transform]\label{chap4_theorem1}
  For $m \in \IN$ and $\vec{u} \in \IR^{2 m}$ we have
  \begin{align*}
    \vec{u} = \sum_{k = 1-m}^{m} \hat{u}_k \vec{\varphi}(\theta_k), \qquad
    \vec{\varphi}(\theta_k)[\ell]:= e^{\i \ell \theta_k},\quad \ell = 1,\ldots,
    2m,  \qquad \theta_k := \frac{k \pi}{m},
  \end{align*}
  with the coefficients
  \begin{align*}
    \hat{u}_k := \frac{1}{2m} \spv{\vec{u}}{\vec{\varphi}(-\theta_k)}_{\ell^2} =
    \frac{1}{2m} \sum_{\ell = 1}^{2m} \vec{u}[\ell]
    \vec{\varphi}(-\theta_k)[\ell], \qquad \text{for } k = 1-m,\ldots,m.
  \end{align*}
\end{theorem}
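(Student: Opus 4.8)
The plan is to show that the $2m$ Fourier vectors $\{\vec{\varphi}(\theta_k)\}_{k=1-m}^{m}$ form an orthogonal basis of $\IC^{2m}$ with respect to the bilinear pairing $\spv{\vec{a}}{\vec{b}}_{\ell^2} = \sum_{\ell=1}^{2m} \vec{a}[\ell]\,\vec{b}[\ell]$, and then to read off the expansion coefficients by pairing $\vec{u}$ against the appropriate mode. Since there are exactly $2m$ such vectors in the $2m$-dimensional space $\IC^{2m}$ (and $\IR^{2m}\subset\IC^{2m}$), orthogonality alone will give both linear independence and the spanning property, so no dimension-counting subtlety remains.

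First I would establish the orthogonality relation
\[
  \spv{\vec{\varphi}(\theta_j)}{\vec{\varphi}(-\theta_k)}_{\ell^2}
  = \sum_{\ell=1}^{2m} e^{\i \ell (\theta_j - \theta_k)}
  = 2m\,\delta_{jk}, \qquad j,k \in \{1-m,\ldots,m\}.
\]
Writing $d := j-k$ and $q := e^{\i d \pi / m}$, the sum on the left is the geometric sum $\sum_{\ell=1}^{2m} q^\ell$. When $d=0$ we have $q=1$ and the sum equals $2m$. When $d\neq 0$, note that $q^{2m} = e^{\i 2\pi d} = 1$ while $q\neq 1$: indeed $j,k\in\{1-m,\ldots,m\}$ forces $d\in\{1-2m,\ldots,2m-1\}$, and the only multiple of $2m$ in this range is $d=0$, so $q$ is a genuine nontrivial $2m$-th root of unity. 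Hence the geometric sum telescopes to $q\,(q^{2m}-1)/(q-1)=0$. This is the technical heart of the argument and the only place where care is needed: one must check that the index range of $k$ is exactly wide enough that distinct modes never coincide, yet narrow enough that $q^{2m}=1$ still holds for all admissible $d$.

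Given the orthogonality relation, I would finish as follows. Since the $2m$ vectors $\vec{\varphi}(\theta_k)$ are mutually orthogonal and nonzero, they are linearly independent and therefore form a basis of $\IC^{2m}$. Thus every $\vec{u}\in\IR^{2m}\subset\IC^{2m}$ admits a unique expansion $\vec{u} = \sum_{j=1-m}^{m} c_j\,\vec{\varphi}(\theta_j)$. Pairing both sides with $\vec{\varphi}(-\theta_k)$ and invoking the orthogonality relation yields
\[
  \spv{\vec{u}}{\vec{\varphi}(-\theta_k)}_{\ell^2}
  = \sum_{j=1-m}^{m} c_j\, \spv{\vec{\varphi}(\theta_j)}{\vec{\varphi}(-\theta_k)}_{\ell^2}
  = 2m\, c_k,
\]
so that $c_k = \frac{1}{2m}\spv{\vec{u}}{\vec{\varphi}(-\theta_k)}_{\ell^2} = \hat u_k$, which is precisely the claimed coefficient formula. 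This establishes the inversion formula and the representation of $\vec{u}$ simultaneously, completing the proof.
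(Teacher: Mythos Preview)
Your argument is correct and is the standard orthogonality proof of the discrete Fourier inversion formula. The paper does not actually prove this theorem but defers to a textbook reference (Wesseling, Theorem~7.3.1); your proposal supplies exactly the argument one finds there, so there is nothing to compare.
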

\begin{proof}
  The proof can be found for example in \cite[Theorem 7.3.1]{Wesseling2004}.
\end{proof}
\begin{definition}[Fourier modes, Fourier frequencies]
  Let $N_L \in \IN$. Then the vector valued function
  $\vec{\varphi}(\theta_k)[\ell]:= e^{\i \ell \theta_k}$, $\ell = 1,\ldots,N_L$
  is called Fourier mode with frequency
  \begin{align*}
  \theta_k \in \Theta_L := \left\{ \frac{2 k \pi}{N_L} : k = 1 -
    \frac{N_L}{2},\ldots,\frac{N_L}{2} \right\} \subset (-\pi,\pi].
  \end{align*}
  The frequencies $\Theta_L$ are further separated into low and high
  frequencies
  \begin{align*}
     \Theta_L^{\mathrm{low}} &:= \Theta_L \cap (-\frac{\pi}{2},\frac{\pi}{2}],\\
     \Theta_L^{\mathrm{high}} &:= \Theta_L \cap \left( (-\pi,-\frac{\pi}{2}]
       \cup (\frac{\pi}{2},\pi] \right) = \Theta_L \setminus
     \Theta_L^{\mathrm{low}}.
  \end{align*}
\end{definition}
We denote by $N_L \in \IN$ the number of time steps for the level
$L\in \IN_0$, and by $N_t = p_t +1 \in \IN$ the degrees of freedom with
respect to one time step, see also
(\ref{chap4_labelBasisFunctions}). The next lemma permits the
transform of a given vector corresponding to problem
(\ref{chap4_equationLinearSystemODE}) into the frequency domain.
\begin{lemma}\label{chap4_lemma1}
  The vector $\vec{u} = (\vec{u}_1, \vec{u}_2,\ldots,\vec{u}_{N_L})^\top \in
  \IR^{N_L\,N_t}$ for $N_{L-1}, N_t \in \IN$ and $N_L = 2 N_{L-1}$ can be written
  as
  \[ \vec{u} = \sum_{k=-N_{L-1}+1}^{N_{L-1}} \vec{\psi}^L(\theta_k, U) =
  \sum_{\theta_k \in \Theta_L} \vec{\psi}^L(\theta_k,U), \]
  with the vectors
  $\vec{\psi}_n^L(\theta_k, U) := U \vec{\Phi}_n^L(\theta_k)$ and
  $\vec{\Phi}_n^L(\theta_k)[\ell] := \vec{\varphi}(\theta_k)[n]$
  for $n=1,\ldots,N_L$ and $\ell = 1,\ldots,N_t$,
  and the coefficient matrix
  $U = \mathrm{diag}(\hat{u}_k[1],\ldots,\hat{u}_k[N_t]) \in \IC^{N_t \times N_t}$
  with the coefficients
  $\hat{u}_k[\ell] := \frac{1}{N_{L}} \sum_{i=1}^{N_L} u_{i}[\ell]
  \vec{\varphi}(-\theta_k)[i]$ for $k = 1 -N_{L-1},\ldots,N_{L-1}$.
\end{lemma}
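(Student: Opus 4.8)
The plan is to verify the claimed decomposition by a direct computation that reduces the multi-level vector statement to the scalar discrete Fourier transform of Theorem~\ref{chap4_theorem1}. First I would observe that the object being decomposed, $\vec{u} \in \IR^{N_L N_t}$, is naturally indexed by a pair $(n,\ell)$ where $n = 1,\ldots,N_L$ labels the time step and $\ell = 1,\ldots,N_t$ labels the local degree of freedom. For each fixed local index $\ell$, the slice $(u_1[\ell], u_2[\ell], \ldots, u_{N_L}[\ell])^\top \in \IR^{N_L}$ is an ordinary vector of length $N_L = 2 N_{L-1}$, so Theorem~\ref{chap4_theorem1} applies to it with $m = N_{L-1}$. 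This gives the scalar expansion
\begin{align*}
u_n[\ell] = \sum_{k=1-N_{L-1}}^{N_{L-1}} \hat{u}_k[\ell]\, \vec{\varphi}(\theta_k)[n],
\end{align*}
with exactly the coefficients $\hat{u}_k[\ell] = \frac{1}{N_L}\sum_{i=1}^{N_L} u_i[\ell]\,\vec{\varphi}(-\theta_k)[i]$ stated in the lemma, and with $\theta_k = k\pi/N_{L-1} = 2k\pi/N_L$ matching the frequency set $\Theta_L$.

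The second step is bookkeeping: I would repackage this scalar identity back into the block form of the lemma. Since the coefficient matrix $U = \mathrm{diag}(\hat{u}_k[1],\ldots,\hat{u}_k[N_t])$ is diagonal, the vector $\vec{\psi}_n^L(\theta_k, U) = U\,\vec{\Phi}_n^L(\theta_k)$ has $\ell$-th entry equal to $\hat{u}_k[\ell]\,\vec{\Phi}_n^L(\theta_k)[\ell] = \hat{u}_k[\ell]\,\vec{\varphi}(\theta_k)[n]$, using the definition $\vec{\Phi}_n^L(\theta_k)[\ell] = \vec{\varphi}(\theta_k)[n]$. Summing over $k$ and reading off the $(n,\ell)$ component therefore reproduces exactly the scalar expansion above for every $\ell$, so the block identity $\vec{u}_n = \sum_k \vec{\psi}_n^L(\theta_k,U)$ holds componentwise, hence as vectors for each $n$, hence for the full stacked vector $\vec{u}$. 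The final equality $\sum_{k=-N_{L-1}+1}^{N_{L-1}} = \sum_{\theta_k \in \Theta_L}$ is just a relabeling of the summation index once one checks that $k \mapsto \theta_k = 2k\pi/N_L$ is a bijection from $\{1-N_{L-1},\ldots,N_{L-1}\}$ onto $\Theta_L$.

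I do not anticipate a genuine obstacle here, since the result is essentially a reindexed vector-valued version of the standard DFT; the only point requiring care is notational consistency. The subtlety worth stating explicitly is that the role of $\vec{\Phi}_n^L(\theta_k)$ is to broadcast the scalar Fourier mode $\vec{\varphi}(\theta_k)[n]$, which depends only on the time-step index $n$, identically across all $N_t$ local components, while all the dependence on $\ell$ is carried by the diagonal coefficient matrix $U$. Once this separation of the time index $n$ from the local index $\ell$ is made precise, the proof is an immediate application of Theorem~\ref{chap4_theorem1} to each of the $N_t$ scalar slices, followed by verifying that the index transformation $m = N_{L-1}$, $\theta_k = k\pi/N_{L-1}$, matches the frequency normalization in the definition of $\Theta_L$.
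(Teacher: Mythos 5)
Your proposal is correct and follows essentially the same route as the paper: fix the local index $\ell$, apply the scalar discrete Fourier transform of Theorem \ref{chap4_theorem1} to the slice $(\vec{u}_1[\ell],\ldots,\vec{u}_{N_L}[\ell])$ with $m=N_{L-1}$, and then repackage the resulting expansion using the diagonality of $U$ and the definition of $\vec{\Phi}_n^L(\theta_k)$. The frequency-normalization check $\theta_k = k\pi/N_{L-1} = 2k\pi/N_L$ is a nice explicit touch that the paper leaves implicit, but the argument is otherwise identical.
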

\begin{proof}
  For a fixed index $\ell \in \{ 1,\ldots,N_t\}$ we apply Theorem
  \ref{chap4_theorem1} to the vector $\tilde{\vec{u}}_\ell \in
  \IR^{N_L}$ with $\tilde{\vec{u}}_\ell[n] := \vec{u}_n[\ell]$, $n =
  1,\ldots,N_L$. Now by using the definition of the coefficient $
  \hat{u}_k[\ell]$ and the definition of the vector
  $\vec{\psi}_n^L(\theta_k)$, the statement of the lemma follows with
  \begin{align*}
    u_n[\ell] &= \tilde{\vec{u}}_\ell[n] =\sum_{k=-N_{L-1}+1}^{N_{L-1}}
    \hat{u}_k[\ell] \vec{\varphi}(\theta_k)[n]
     = \sum_{k=-N_{L-1}+1}^{N_{L-1}} \hat{u}_k[\ell]\vec{\Phi}_n^L(\theta_k)[\ell]\\
     &= \sum_{k=-N_{L-1}+1}^{N_{L-1}} U[\ell,\ell]\vec{\Phi}_n^L(\theta_k)[\ell]
     = \sum_{k=-N_{L-1}+1}^{N_{L-1}} \vec{\psi}_n^L(\theta_k, U)[\ell] =
     \sum_{\theta_k \in \Theta_L} \vec{\psi}_n^L(\theta_k, U)[\ell].
  \end{align*}
\end{proof}

Note that in Lemma \ref{chap4_lemma1} the vector $\vec{\psi}^L =
\vec{\psi}^L (\theta_k, U)$ depends on the frequency $\theta_k \in
\Theta_L$ and on the coefficient matrix $U \in \IC^{N_t \times N_t}$,
where the coefficient matrix $U$ can be computed via the given vector
$\vec{u} = (\vec{u}_1, \vec{u}_2,\ldots,\vec{u}_{N_L})^\top$. In the
following we will study the mapping properties of the system matrix
$\mathcal{L}_{\tau_L}$ and the smoother $\mathcal{S}_{\tau_L}^\nu$
with respect to the vector $\vec{\psi}^L = \vec{\psi}^L (\theta_k,
U)$. Since the coefficient matrix $U$ will be fixed and since we have
to study the mapping properties of $\mathcal{L}_{\tau_L}$ and
$\mathcal{S}_{\tau_L}^\nu$ with respect to the frequencies $\theta_k
\in \Theta_L$, we will use the simpler notation $\vec{\psi}^L =
\vec{\psi}^L(\theta_k)$. The dependence of the vector $\vec{\psi}^L$
on the coefficient matrix $U$ is given in 
%Lemma \ref{chap4_lemma1} motivates the definition of the following space.
\begin{definition}[Fourier space]
  For $N_{L}, N_t \in \IN$ let the vector $\vec{\Phi}^L(\theta_k) \in \IC^{N_t
    N_L}$ be defined as in Lemma \ref{chap4_lemma1} with frequency $\theta_k
  \in \Theta_L$. Then we define the linear space of Fourier modes with
  frequency $\theta_k$ as
  \begin{align*}
    \Psi_L(\theta_k) &:= \mathrm{span}\left\{ \vec{\Phi}^L(\theta_k) \right\} \\
    &\phantom{:}= \left\{ \vec{\psi}^L(\theta_k) \in \IC^{N_t N_L} :
      \vec{\psi}_n^L(\theta_k) =
      U \vec{\Phi}_n^L(\theta_k),\; n = 1,\ldots,N_L \text{ and } U \in \IC^{N_t
        \times N_t} \right\}.
\end{align*}
\end{definition}

\subsection{Smoothing analysis}

To study the mapping properties of the system matrix
$\mathcal{L}_{\tau_L}$ and the smoother $\mathcal{S}_{\tau_L}^\nu$, we need
the following
\begin{lemma}\label{chap4_lemma2}
   For $N_L, N_t \in \IN$ let $\vec{\psi}^L(\theta_k) \in \Psi_L(\theta_k)$.
   Then we have for $n = 2,\ldots,N_L$ the shifting equality 
   $\vec{\psi}_{n-1}^L(\theta_k) = e^{-\i \theta_k} \vec{\psi}_n^L(\theta_k)$.
\end{lemma}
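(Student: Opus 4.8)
The plan is to reduce the shifting identity for the block vectors $\vec{\psi}_n^L(\theta_k)$ to an elementary one-step shift of the underlying scalar Fourier mode $\vec{\varphi}(\theta_k)$. First I would unwind the membership $\vec{\psi}^L(\theta_k) \in \Psi_L(\theta_k)$: by the definition of the Fourier space there exists a coefficient matrix $U \in \IC^{N_t \times N_t}$ with $\vec{\psi}_n^L(\theta_k) = U \vec{\Phi}_n^L(\theta_k)$ for every $n = 1,\ldots,N_L$. Since $U$ is a fixed linear map that commutes with scalar multiplication, it suffices to establish the shift at the level of the generating vectors $\vec{\Phi}_n^L(\theta_k)$ and then apply $U$.

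Next I would invoke the defining relation $\vec{\Phi}_n^L(\theta_k)[\ell] = \vec{\varphi}(\theta_k)[n] = e^{\i n \theta_k}$ from Lemma \ref{chap4_lemma1}, which holds for all $\ell = 1,\ldots,N_t$ and is independent of $\ell$. The key observation is the shift in the exponent: for $n = 2,\ldots,N_L$,
\[
  \vec{\varphi}(\theta_k)[n-1] = e^{\i (n-1)\theta_k}
  = e^{-\i \theta_k}\, e^{\i n \theta_k}
  = e^{-\i \theta_k}\, \vec{\varphi}(\theta_k)[n].
\]
Because this holds componentwise and uniformly in $\ell$, it lifts immediately to the block vectors, yielding $\vec{\Phi}_{n-1}^L(\theta_k) = e^{-\i \theta_k}\, \vec{\Phi}_n^L(\theta_k)$.

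Finally I would apply $U$ to both sides and pull the scalar $e^{-\i \theta_k}$ through the matrix product,
\[
  \vec{\psi}_{n-1}^L(\theta_k) = U \vec{\Phi}_{n-1}^L(\theta_k)
  = e^{-\i \theta_k}\, U \vec{\Phi}_n^L(\theta_k)
  = e^{-\i \theta_k}\, \vec{\psi}_n^L(\theta_k),
\]
which is the claimed identity. I do not anticipate any genuine obstacle: the statement is a direct consequence of the geometric structure $e^{\i n \theta_k}$ of the modes. The only points requiring care are to restrict to $n \geq 2$ so that the shifted index $n-1$ remains in the valid block range $\{1,\ldots,N_L\}$, and to note that the argument uses an arbitrary fixed coefficient matrix $U$, so that the conclusion holds for every element of $\Psi_L(\theta_k)$.
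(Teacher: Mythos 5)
Your proposal is correct and follows essentially the same route as the paper: both rest on the exponent shift $e^{\i(n-1)\theta_k} = e^{-\i\theta_k}e^{\i n\theta_k}$ for the scalar mode and then transfer it to $\vec{\psi}_n^L$ via the linearity of $U$ (the paper simply carries this out componentwise in one chain of equalities rather than isolating the shift of $\vec{\Phi}_n^L$ first). Your remark about restricting to $n\ge 2$ so that $n-1$ stays in range is the right point of care and matches the lemma's hypothesis.
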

\begin{proof}
  Using the definition of the blockwise Fourier mode $\vec{\psi}^L(\theta_k)
  \in \Psi_L(\theta_k)$, we get the statement of the lemma for $n = 2,\ldots,
  N_L$ and $\ell = 1,\ldots,N_t$ with
  \begin{align*}
    \vec{\psi}_{n-1}^L(\theta_k)[\ell] &=  \sum_{i=1}^{N_t} U[\ell,i]
    \vec{\Phi}_{n-1}^L(\theta_k)[i] = \sum_{i=1}^{N_t} U[\ell,i]
    \vec{\varphi}(\theta_k)[n-1] = \sum_{i=1}^{N_t} U[\ell,i]
    e^{\i (n-1) \theta_k}\\ 
    &= e^{-\i \theta_k} \sum_{i=1}^{N_t} U[\ell,i] e^{\i n \theta_k} = e^{-\i
      \theta_k} \sum_{i=1}^{N_t} U[\ell,i]\vec{\varphi}(\theta_k)[n]\\
    &= e^{-\i \theta_k} \sum_{i=1}^{N_t} U[\ell,i]
    \vec{\Phi}_{n}^L(\theta_k)[i] = e^{-\i \theta_k}
    \vec{\psi}_{n}^L(\theta_k)[\ell].
  \end{align*}
\end{proof}

We can now obtain the Fourier symbol of the periodic system matrix $\tilde{\mathcal{L}}_{\tau_L}$.
\begin{lemma}\label{chap4_lemma3}
   For $N_L, N_t \in \IN$ let $\vec{\psi}^L(\theta_k) \in
   \Psi_L(\theta_k)$. Then for the system matrix $\tilde{\mathcal{L}}_{\tau_L}$ as
   defined in (\ref{chap4_equationLinearSystemODEPeriodic}) the Fourier symbol is
  \begin{align*}
    \big( \tilde{\mathcal{L}}_{\tau_L} \vec{\psi}^L(\theta_k) \big)_n = \left(
      K_{\tau_L} + M_{\tau_L} - e^{-\i \theta_k}
    N_{\tau_L}\right) \vec{\psi}_n^L(\theta_k) \quad \text{for } n=1,\ldots,N_L.
  \end{align*}
\end{lemma}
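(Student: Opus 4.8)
The plan is to evaluate $\tilde{\mathcal{L}}_{\tau_L}\vec{\psi}^L(\theta_k)$ block by block, exploiting the Kronecker product structure of (\ref{chap4_equationLinearSystemODEPeriodic}) together with the shifting equality of Lemma \ref{chap4_lemma2}. Writing $\vec{\psi}^L = \vec{\psi}^L(\theta_k)$ for brevity, the block-diagonal term $I_N\otimes(K_{\tau_L}+M_{\tau_L})$ contributes $(K_{\tau_L}+M_{\tau_L})\vec{\psi}_n^L$ to the $n$-th block, while the coupling term $\tilde{U}_N\otimes N_{\tau_L}$ contributes $\sum_{m=1}^{N_L}\tilde{U}_N[n,m]\,N_{\tau_L}\vec{\psi}_m^L$. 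Since each row of the circulant matrix $\tilde{U}_N$ in (\ref{chap4_circulantMatrix}) has a single nonzero entry equal to $-1$, this second contribution always reduces to a single block, so the entire computation amounts to identifying which block appears in row $n$ and simplifying it with the shift relation.

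First I would treat the interior rows $n = 2,\ldots,N_L$. There the only nonzero entry in row $n$ of $\tilde{U}_N$ is $\tilde{U}_N[n,n-1]=-1$, so the $n$-th block becomes $(K_{\tau_L}+M_{\tau_L})\vec{\psi}_n^L - N_{\tau_L}\vec{\psi}_{n-1}^L$. Invoking the shifting equality $\vec{\psi}_{n-1}^L = e^{-\i\theta_k}\vec{\psi}_n^L$ from Lemma \ref{chap4_lemma2}, the two terms combine into $(K_{\tau_L}+M_{\tau_L}-e^{-\i\theta_k}N_{\tau_L})\vec{\psi}_n^L$, which is precisely the claimed symbol.

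The remaining and only delicate case is the first block row $n=1$, where the circulant structure produces the wrap-around entry $\tilde{U}_N[1,N_L]=-1$ instead, giving $(K_{\tau_L}+M_{\tau_L})\vec{\psi}_1^L - N_{\tau_L}\vec{\psi}_{N_L}^L$. Lemma \ref{chap4_lemma2} does not cover this case, so here I would establish the periodic shift $\vec{\psi}_{N_L}^L = e^{-\i\theta_k}\vec{\psi}_1^L$ directly. This is where the explicit choice of frequencies enters: since $\theta_k = \frac{2k\pi}{N_L}$, we have $e^{\i N_L\theta_k}=e^{\i 2k\pi}=1$, whence $\vec{\varphi}(\theta_k)[N_L]=1=e^{-\i\theta_k}e^{\i\theta_k}=e^{-\i\theta_k}\vec{\varphi}(\theta_k)[1]$. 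Because each component satisfies $\vec{\psi}_n^L(\theta_k)[\ell]=\big(\sum_{i=1}^{N_t} U[\ell,i]\big)\vec{\varphi}(\theta_k)[n]$, this periodicity of the underlying Fourier mode transfers immediately to $\vec{\psi}_{N_L}^L = e^{-\i\theta_k}\vec{\psi}_1^L$, and the first block collapses to the same symbol as the interior blocks. I expect this wrap-around step to be the main (if mild) obstacle, since it is the one place where the nonperiodic shift of Lemma \ref{chap4_lemma2} must be replaced by the periodicity built into the frequencies $\theta_k\in\Theta_L$; once it is in place the identity holds uniformly in $n=1,\ldots,N_L$.
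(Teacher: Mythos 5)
Your proposal is correct and follows essentially the same route as the paper's proof: decompose via the Kronecker structure, apply the shifting equality of Lemma \ref{chap4_lemma2} for $n=2,\ldots,N_L$, and treat the wrap-around row $n=1$ separately using $\vec{\psi}_{N_L}^L(\theta_k)=e^{-\i\theta_k}\vec{\psi}_1^L(\theta_k)$. You in fact spell out the justification of this last identity from $\theta_k\in\Theta_L$ (i.e.\ $e^{\i N_L\theta_k}=1$) more explicitly than the paper does.
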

\begin{proof}
  Using the representation (\ref{chap4_equationLinearSystemODEPeriodic}) of
  the matrix $\tilde{\mathcal{L}}_{\tau_L}$, we get for a fixed but arbitrary
  $j = 1,\ldots,N_t$
  \begin{align*}
    \big( \tilde{\mathcal{L}}_{\tau_L}  \vec{\psi}^L(\theta_k) \big)_n[j] &=
    \sum_{m=1}^{N_L}
    \sum_{i=1}^{N_t} \big( I_{N_L}[n,m] (K_{\tau_L} + M_{\tau_L})[j,i] +
      \tilde{U}_{N_L}[n,m]N_{\tau_L}[j,i] \big)  \vec{\psi}_m^L(\theta_k)[i]\\
      &= \sum_{i=1}^{N_t}  (K_{\tau_L} + M_{\tau_L})[j,i]
      \vec{\psi}_n^L(\theta_k)[i] +
      \sum_{i=1}^{N_t} N_{\tau_L}[j,i] \sum_{m=1}^{N_L}
      \tilde{U}_{N_L}[n,m]\vec{\psi}_m^L(\theta_k)[i]\\
      &= \sum_{i=1}^{N_t}  (K_{\tau_L} + M_{\tau_L})[j,i]
      \vec{\psi}_n^L(\theta_k)[i] -
      \sum_{i=1}^{N_t} N_{\tau_L}[j,i] \vec{\psi}_{n-1}^L(\theta_k)[i]\\
      &= \sum_{i=1}^{N_t} \big( K_{\tau_L} + M_{\tau_L} - e^{-\i \theta_k}
      N_{\tau_L} \big) [j,i] \vec{\psi}_n^L(\theta_k)[i]\\
      &= \left( \big( K_{\tau_L} + M_{\tau_L} - e^{-\i \theta_k}
      N_{\tau_L} \big)  \vec{\psi}_n^L(\theta_k) \right) [j],
  \end{align*}
  where we used the definition of the matrix $\tilde{U}_{N_L}$ and Lemma
  \ref{chap4_lemma2}, assuming $n\neq 1$. For $n=1$, we observe that
  $$
    \sum_{m=1}^{N_L} \tilde{U}_{N_L}[n,m]\vec{\psi}_m^L(\theta_k)[i] =
    -N_{\tau_L}[j,i] \vec{\psi}_{N_L}^L(\theta_k)[i] =
    - e^{-\i \theta_k} N_{\tau_L} \vec{\psi}_n^L(\theta_k)[i],
  $$
 and hence we conclude that
  $
    \big( \tilde{\mathcal{L}}_{\tau_L} \vec{\psi}^L(\theta_k) \big)_n[j] = \left(\big( K_{\tau_L} +
    M_{\tau_L} - e^{-\i \theta_k} N_{\tau_L}\big) \vec{\psi}_n^L(\theta_k) \right)[j].
  $
\end{proof}

Lemma \ref{chap4_lemma3} shows that the periodic system matrix
$\tilde{\mathcal{L}}_{\tau_L}$ is a self-map on the Fourier space
$\Psi_L(\theta_k)$, i.e.  $\tilde{\mathcal{L}}_{\tau_L} : \Psi_L(\theta_k)
\rightarrow \Psi_L(\theta_k)$. 
This would not be the case for the system matrix
$\mathcal{L}_{\tau_L}$, but the two are closely related.

We next obtain the Fourier symbol of the periodic smoother
$\tilde{\mathcal{S}}_{\tau_L}^\nu := \left[I - \omega_t
    D_{\tau_L}^{-1} \tilde{\mathcal{L}}_{\tau_L} \right]^\nu$.
\begin{lemma}\label{chap4_lemma3a}
   For $N_L, N_t \in \IN$ let $\vec{\psi}^L(\theta_k) \in
   \Psi_L(\theta_k)$. Then for the smoother $\tilde{\mathcal{S}}_{\tau_L}^\nu$,
   we obtain for $\omega_t \in \IR$ the symbol
  \begin{align*}
    \big( \tilde{\mathcal{S}}_{\tau_L}^\nu \vec{\psi}^L(\theta_k) \big)_n = S_{\tau_L}(\theta_k,\omega_t)
    \vec{\psi}_n^L(\theta_k) \qquad \text{for } n = 1,\ldots, N_L,
  \end{align*}
  with the local iteration matrix
  \begin{align*}
    S_{\tau_L}(\theta_k,\omega_t) := (1-\omega_t) I_{N_t} + e^{-\i
          \theta_k} \omega_t (K_{\tau_L} + M_{\tau_L})^{-1}
      N_{\tau_L}.
  \end{align*}
\end{lemma}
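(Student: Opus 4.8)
The plan is to reduce the whole computation to the block-wise action on a single Fourier mode that Lemma~\ref{chap4_lemma3} already provides for $\tilde{\mathcal{L}}_{\tau_L}$. The smoother $\tilde{\mathcal{S}}_{\tau_L}^\nu=\big[I-\omega_t D_{\tau_L}^{-1}\tilde{\mathcal{L}}_{\tau_L}\big]^\nu$ is assembled from the block-diagonal matrix $D_{\tau_L}^{-1}$, the periodic system matrix $\tilde{\mathcal{L}}_{\tau_L}$ and the identity, so I would first verify that each of these maps the Fourier space $\Psi_L(\theta_k)$ into itself by left-multiplying every block with a fixed $N_t\times N_t$ matrix, and then simply multiply and iterate these block symbols.

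First I would dispose of $D_{\tau_L}^{-1}$. Since $D_{\tau_L}=\mathrm{diag}\{K_{\tau_L}+M_{\tau_L}\}_{n=1}^{N_L}$ has identical diagonal blocks, for $\vec{\psi}^L(\theta_k)\in\Psi_L(\theta_k)$ we get
\[
 \big(D_{\tau_L}\vec{\psi}^L(\theta_k)\big)_n=(K_{\tau_L}+M_{\tau_L})\,U\,\vec{\Phi}_n^L(\theta_k),
\]
which is again a mode in $\Psi_L(\theta_k)$, now with coefficient matrix $(K_{\tau_L}+M_{\tau_L})U$; the same reasoning applied to the inverse shows that $D_{\tau_L}^{-1}$ acts on each block by $(K_{\tau_L}+M_{\tau_L})^{-1}$. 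Combining this with Lemma~\ref{chap4_lemma3}, one step of the smoother acts on $\vec{\psi}^L(\theta_k)$ block-wise through
\[
 I_{N_t}-\omega_t\,(K_{\tau_L}+M_{\tau_L})^{-1}\big(K_{\tau_L}+M_{\tau_L}-e^{-\i\theta_k}N_{\tau_L}\big),
\]
and the short simplification $(K_{\tau_L}+M_{\tau_L})^{-1}(K_{\tau_L}+M_{\tau_L})=I_{N_t}$ collapses this expression to exactly the local iteration matrix $S_{\tau_L}(\theta_k,\omega_t)=(1-\omega_t)I_{N_t}+e^{-\i\theta_k}\omega_t(K_{\tau_L}+M_{\tau_L})^{-1}N_{\tau_L}$.

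The final step is to pass from one step to the $\nu$-fold iteration. Here the key structural observation is that a single smoothing step sends $\vec{\psi}^L(\theta_k)$ to another mode in $\Psi_L(\theta_k)$, namely the one with coefficient matrix $S_{\tau_L}(\theta_k,\omega_t)U$; thus $\Psi_L(\theta_k)$ is invariant under the smoother and applying the step $\nu$ times multiplies each block by the $\nu$-th power $\big[S_{\tau_L}(\theta_k,\omega_t)\big]^\nu$, which yields the claim. I do not expect a real obstacle: the only point that needs care is precisely this invariance, i.e. that $D_{\tau_L}^{-1}$ preserves $\Psi_L(\theta_k)$ and that all $N_L$ blocks carry the \emph{same} symbol matrix, since it is exactly this uniformity that lets the power be taken on the small $N_t\times N_t$ symbol rather than on the full $N_tN_L$-dimensional operator.
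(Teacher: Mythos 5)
Your argument is correct and follows essentially the same route as the paper: exploit the block-diagonal structure of $D_{\tau_L}^{-1}$ together with the Fourier symbol of $\tilde{\mathcal{L}}_{\tau_L}$ from Lemma~\ref{chap4_lemma3} to get the single-step symbol, then iterate (the paper does this by induction on $\nu$, you by noting the invariance of $\Psi_L(\theta_k)$ under the smoother, which is the same observation). Your explicit remark that all blocks carry the same symbol matrix is exactly the point that makes the induction work.
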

\begin{proof}
  Let $\vec{\psi}^L(\theta_k) \in \Psi_L(\theta_k)$ and $\nu = 1$. Then, for $n =
  1,\ldots,N_L$ we obtain, using that $D_{\tau_L}^{-1}$ is a block
  diagonal matrix and applying Lemma \ref{chap4_lemma3}
  \begin{align*}
    \big( \tilde{\mathcal{S}}_{\tau_L}^1  \vec{\psi}^L(\theta_k)  \big)_n &= \big(
    \left( I_{N_L N_t} - \omega_t D_{\tau_L}^{-1} \tilde{\mathcal{L}}_{\tau_L}\right)
    \vec{\psi}^L(\theta_k)  \big)_n\\
  &=   \vec{\psi}_n^L(\theta_k)  - \omega_t (K_{\tau_L} + M_{\tau_L})^{-1} \big(
  \tilde{\mathcal{L}}_{\tau_L}   \vec{\psi}^L(\theta_k)  \big)_n\\
  &=   \vec{\psi}_n^L(\theta_k)  - \omega_t (K_{\tau_L} + M_{\tau_L})^{-1} \left(
    K_{\tau_L} + M_{\tau_L} - e^{-\i \theta_k} N_{\tau_L}\right)
  \vec{\psi}_n^L(\theta_k) \\
  &= \left( (1-\omega_t)
      I_{N_t} + e^{-\i \theta_k} \omega_t (K_{\tau_L} + M_{\tau_L})^{-1}
      N_{\tau_L}\right)  \vec{\psi}_n^L(\theta_k).
  \end{align*}
  For $\nu>1$ the statement follows simply by induction.
\end{proof}

To analyze the smoothing behavior of the damped block Jacobi smoother
$\tilde{\mathcal{S}}_{\tau_L}^\nu$, we have to estimate the spectral radius of
the
local iteration matrix
\[ 
  S_{\tau_L}(\theta_k,\omega_t) =  (1-\omega_t) I_{N_t} + e^{-\i \theta_k} \omega_t
  (K_{\tau_L} + M_{\tau_L})^{-1} N_{\tau_L}\in \IC^{N_t \times N_t}.
\]
Hence, we have to compute the eigenvalues of the matrix
$(K_{\tau_L} + M_{\tau_L})^{-1} N_{\tau_L}$.

\begin{lemma}\label{chap4_theorem2}
  For $\lambda \in \IC$ the eigenvalues of the matrix $(K_{\tau_L} - \lambda M_{\tau_L})^{-1} N_{\tau_L} \in
  \IC^{N_t \times N_t}$ are given by
  \begin{align*}
    \sigma((K_{\tau_L} - \lambda M_{\tau_L})^{-1} N_{\tau_L}) = \{ 0, R(\lambda \tau_L) \},
  \end{align*}
  where $R(z)$ is the A-stability function of the given discontinuous Galerkin time stepping scheme.
\end{lemma}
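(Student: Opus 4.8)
The plan is to exploit the rank-one structure of the coupling matrix $N_{\tau_L}$. From its definition $N_{\tau_L}[k,\ell] = \psi_\ell^{n-1}(t_{n-1})\,\psi_k^n(t_{n-1})$ I would first observe that it factors as an outer product $N_{\tau_L} = \vec{a}\,\vec{b}^\top$, where $\vec{a}[k] := \psi_k^n(t_{n-1})$ collects the values of the basis functions on interval $n$ at its left endpoint and $\vec{b}[\ell] := \psi_\ell^{n-1}(t_{n-1})$ collects the values of the basis functions on interval $n-1$ at its right endpoint. By the translation invariance of the uniform mesh these vectors are independent of $n$, which is exactly what makes the blocks in (\ref{chap4_equationLinearSystemODE}) constant; consequently $N_{\tau_L}$ has rank one.

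Next I would pass to $A := (K_{\tau_L} - \lambda M_{\tau_L})^{-1} N_{\tau_L} = \big[(K_{\tau_L} - \lambda M_{\tau_L})^{-1}\vec{a}\big]\,\vec{b}^\top$, which is again of rank at most one. A rank-one matrix $\vec{u}\vec{v}^\top \in \IC^{N_t \times N_t}$ has the eigenvalue $0$ with multiplicity $N_t-1$, and its single remaining eigenvalue equals its trace $\vec{v}^\top\vec{u}$. Applying this with $\vec{u} = (K_{\tau_L} - \lambda M_{\tau_L})^{-1}\vec{a}$ and $\vec{v} = \vec{b}$ immediately yields
\[
\sigma(A) = \left\{\,0,\; \vec{b}^\top (K_{\tau_L} - \lambda M_{\tau_L})^{-1}\vec{a}\,\right\},
\]
so it only remains to identify the nonzero eigenvalue with $R(\lambda\tau_L)$.

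For this last step I would interpret the scalar $\vec{b}^\top (K_{\tau_L} - \lambda M_{\tau_L})^{-1}\vec{a}$ as the one-step transmission factor of the DG scheme applied to the Dahlquist test equation $\partial_t u = \lambda u$. For that equation a single DG step reads $(K_{\tau_L} - \lambda M_{\tau_L})\vec{u}_n = N_{\tau_L}\vec{u}_{n-1}$, and since $N_{\tau_L}\vec{u}_{n-1} = \vec{a}\,(\vec{b}^\top\vec{u}_{n-1})$ with $\vec{b}^\top\vec{u}_{n-1} = u_\tau^{n-1}(t_{n-1})$ the transmitted endpoint value, solving for $\vec{u}_n$ and reading off the new transmitted value gives
\[
u_\tau^n(t_n) = \vec{b}^\top\vec{u}_n = \big[\vec{b}^\top (K_{\tau_L} - \lambda M_{\tau_L})^{-1}\vec{a}\big]\, u_\tau^{n-1}(t_{n-1}).
\]
Thus the scalar is precisely the factor by which the transmitted value of the test equation is multiplied in one step, which is the defining property of the stability function. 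Since the DG scheme is equivalent to RADAU IA by Theorem \ref{chap4_theorem1a}, with stability function $R$ by Corollary \ref{chap4_corollary1}, this factor equals $R(\lambda\tau_L)$.

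The main obstacle I anticipate is exactly this last identification: one must check that the quantity propagated from interval to interval is the right endpoint value $u_\tau^n(t_n)$, and that the row vector $\vec{b}$ occurring in $N_{\tau_L}$ is the same evaluation vector that reads off this transmitted value on the subsequent interval, which again hinges on the translation invariance of the basis. Confirming that this scalar recursion is the one defining $R$, rather than one governing an internal stage value, is the only place in the argument that is more than bookkeeping.
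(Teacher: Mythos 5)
Your proof is correct and follows essentially the same route as the paper: exploit the rank-one structure of $N_{\tau_L}$ so that the only possibly nonzero eigenvalue of $(K_{\tau_L}-\lambda M_{\tau_L})^{-1}N_{\tau_L}$ is a single scalar, and identify that scalar with $R(\lambda\tau_L)$ by running the DG scheme on the Dahlquist test equation. The only cosmetic difference is that the paper first invokes basis-independence of the spectrum to pick a basis in which the endpoint-evaluation vector is the first unit vector (making the "trace" step immediate), whereas you keep a general basis, write $N_{\tau_L}=\vec{a}\,\vec{b}^{\top}$ explicitly, and handle the translation-invariance point that the paper leaves implicit.
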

\begin{proof}
  First we notice that the eigenvalues of the matrix $(K_{\tau_L} -
  \lambda M_{\tau_L})^{-1} N_{\tau_L}$ are independent of the basis $\{ \psi_k
  \}_{k=1}^{N_t}$ which is used to compute the matrices $K_{\tau_L},
  M_{\tau_L}$ and $N_{\tau_L}$. Hence we can use basis functions $\{
  \psi_k \}_{k=1}^{N_t}$ where the eigenvalues of the matrix $(K_{\tau_L} - \lambda
  M_{\tau_L})^{-1} N_{\tau_L}$ are easy to compute, i.e. 
  polynomials $\psi_k \in \IP^{p_t}(0,\tau_L)$ with the property
  \begin{align*}
    \psi_k(\tau_L) = \begin{cases} 1 & k=1,\\ 0 & k \neq 1 \end{cases} \qquad
    \text{for } k = 1, \ldots, N_t.
  \end{align*}
  To study the A-Stability of the discontinuous Galerkin
  discretization, we consider for $\lambda \in \IC$ the model problem
  \[ \partial_t u(t) = \lambda u(t), \quad t \in (0,\tau_L) \quad \text{and}
  \quad u(0) = u_0. \]
  This leads to the linear system
  \[ \left(K_{\tau_L} - \lambda M_{\tau_L}\right) \vec{u}_1 = u_0 N_{\tau_L}
  \vec{v}, \]
  with the vector $\vec{v}[1] = 1$ and $\vec{v}[k] = 0$ for $k =
  2,\ldots,N_t$ and with the solution vector $\vec{u}_1 \in \IR^{N_t}$ for the
  first step. Therefore the value at the endpoint $\tau_L$ of the discrete
  solution is given by
  \[   u_1 = u_0 \vec{v}^\top \left(K_{\tau_L} - \lambda
    M_{\tau_L}\right)^{-1} N_{\tau_L} \vec{v} \in \IR. \]
  Hence the stability function $R(z)$ with $z = \lambda \tau_L$ is given by
  \begin{align}\label{chap4_theorem2_equ2}
    R(z(\lambda,\tau_L)) = R(\lambda\tau_L) = \vec{v}^\top \left(K_{\tau_L} - \lambda
    M_{\tau_L}\right)^{-1} N_{\tau_L} \vec{v}.
  \end{align}
  Since the matrix $N_{\tau_L}$ has rank one, only one eigenvalue can
  be nonzero and with (\ref{chap4_theorem2_equ2}), it is easy to see that 
  this eigenvalue is given by $R(\lambda\tau_L)$.
\end{proof}

Lemma \ref{chap4_theorem2} holds for any one step method. Hence a
one step method is A-stable if and only if
\begin{align*}
    \abs{R(z(\lambda,\tau_L))}= \varrho(\left(K_{\tau_L} - \lambda
      M_{\tau_L}\right)^{-1} N_{\tau_L}) < 1 \qquad \text{for all } z \in \IC
    \text{ with }\Re(z) < 0.
\end{align*}
Now we are able to compute the spectral radius of the local iteration matrix
$S_{\tau_L}(\theta_k,\omega_t) = (1-\omega_t) I_{N_t} + e^{-\i \theta_k} \omega_t (K_{\tau_L} + M_{\tau_L})^{-1}
N_{\tau_L} \in \IC^{N_t \times N_t}$.

\begin{lemma}\label{chap4_lemma4}
  Let $p_t \in  \IN_0$. Then for the smoother $\tilde{\mathcal{S}}_{\tau_L}^\nu$, the
  spectral radius of the local iteration matrix $S_{\tau_L}(\theta_k,\omega_t) = (1-\omega_t) I_{N_t} +
  e^{-\i \theta_k} \omega_t (K_{\tau_L} + M_{\tau_L})^{-1} N_{\tau_L}$ is given by
  \begin{align*}
    \varrho\left( S_{\tau_L}(\theta_k,\omega_t)\right) = \max\left\{\abs{1-\omega_t},
    \hat{S}(\omega_t, \alpha(\tau_L),
      \theta_k)\right\},
  \end{align*}
  with
 \begin{align*}
   \left(\hat{S}(\omega_t, \alpha, \theta_k)\right)^2 := (1-\omega_t)^2 + 2
   \omega_t (1 - \omega_t) \alpha \cos(\theta_k) + \alpha^2 \omega_t^2,
 \end{align*}
  where $\alpha = \alpha(t)$ is the  $(p_t,p_t+1)$ subdiagonal Pad\'{e}
  approximation of the exponential function $e^{-t}$.
\end{lemma}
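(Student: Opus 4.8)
The plan is to reduce the computation of the spectral radius of the $N_t \times N_t$ matrix $S_{\tau_L}(\theta_k,\omega_t)$ to knowledge of the spectrum of the single matrix $A := (K_{\tau_L} + M_{\tau_L})^{-1} N_{\tau_L}$, exploiting that $S_{\tau_L}(\theta_k,\omega_t) = (1-\omega_t) I_{N_t} + e^{-\i\theta_k}\omega_t A$ is an affine function of $A$. Since the spectrum of a matrix maps through any polynomial, and in particular affine, function, the eigenvalues of $S_{\tau_L}(\theta_k,\omega_t)$ are exactly $(1-\omega_t) + e^{-\i\theta_k}\omega_t\mu$ as $\mu$ runs over the eigenvalues of $A$; no diagonalizability assumption is needed here.

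The first key step is to apply Lemma \ref{chap4_theorem2} with $\lambda = -1$, so that $K_{\tau_L} - \lambda M_{\tau_L} = K_{\tau_L} + M_{\tau_L}$, which gives that the spectrum of $A$ is $\{0, R(-\tau_L)\}$, with $R$ the stability function of the scheme and the eigenvalue $0$ carrying multiplicity $N_t - 1$ because $N_{\tau_L}$ has rank one. By Corollary \ref{chap4_corollary1}, $R(z)$ is the $(p_t,p_t+1)$ subdiagonal Pad\'e approximation of $e^z$, so $R(-\tau_L)$ equals the $(p_t,p_t+1)$ Pad\'e approximation of $e^{-t}$ evaluated at $t = \tau_L$, which is precisely the quantity $\alpha = \alpha(\tau_L)$ of the statement. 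Crucially, $\alpha$ is real, since $R$ is a rational function with real coefficients evaluated at the real point $-\tau_L$.

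Combining these facts, the spectrum of $S_{\tau_L}(\theta_k,\omega_t)$ is $\{\,1-\omega_t,\ (1-\omega_t) + e^{-\i\theta_k}\omega_t\alpha\,\}$, so that
\[
  \varrho\left(S_{\tau_L}(\theta_k,\omega_t)\right) = \max\left\{\abs{1-\omega_t},\ \abs{(1-\omega_t) + e^{-\i\theta_k}\omega_t\alpha}\right\}.
\]
It then remains only to evaluate the second modulus. Writing $e^{-\i\theta_k} = \cos\theta_k - \i\sin\theta_k$ and using that $\alpha$ is real, I separate real and imaginary parts to obtain
\[
  \abs{(1-\omega_t) + e^{-\i\theta_k}\omega_t\alpha}^2 = \left[(1-\omega_t) + \omega_t\alpha\cos\theta_k\right]^2 + \left[\omega_t\alpha\sin\theta_k\right]^2,
\]
which, after expanding and using $\cos^2\theta_k + \sin^2\theta_k = 1$, collapses to $(1-\omega_t)^2 + 2\omega_t(1-\omega_t)\alpha\cos\theta_k + \alpha^2\omega_t^2 = \left(\hat{S}(\omega_t,\alpha,\theta_k)\right)^2$. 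Substituting this back into the maximum yields the claimed formula.

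The only genuinely delicate point is the identification $R(-\tau_L) = \alpha(\tau_L)$: one must match the sign convention so that applying Lemma \ref{chap4_theorem2} at $\lambda = -1$, forced by the $+\,M_{\tau_L}$ in the definition of $S_{\tau_L}$, produces the Pad\'e approximant of $e^{-t}$ rather than of $e^{t}$, and one must confirm its reality so that the modulus separates cleanly into the stated real and imaginary parts. The concluding expansion is then routine algebra.
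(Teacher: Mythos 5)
Your proposal is correct and follows essentially the same route as the paper: eigenvalues of the affine matrix function via the spectrum of $(K_{\tau_L}+M_{\tau_L})^{-1}N_{\tau_L}$ from Lemma \ref{chap4_theorem2} at $\lambda=-1$, identification of $R(-\tau_L)$ with $\alpha(\tau_L)$, and a direct modulus computation. You merely spell out the "simple calculations" and the reality of $\alpha$ that the paper leaves implicit.
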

\begin{proof}
   Since $I_{N_t}$ is the identity matrix, the eigenvalues of the local iteration
   matrix $S_{\tau_L}(\theta_k,\omega_t)$ are given by
   \begin{align*}
     \sigma(S_{\tau_L}(\theta_k,\omega_t)) = 1 - \omega_t + e^{-\i \theta_k} \omega_t  \sigma((K_{\tau_L} +
     M_{\tau_L})^{-1} N_{\tau_L}).
   \end{align*}
   With Theorem \ref{chap4_theorem2} we are now able to compute the spectrum of
   the iteration matrix $S_{\tau_L}(\theta_k,\omega_t)$,
   \begin{align*}
     \sigma(S_{\tau_L}(\theta_k,\omega_t)) = \left\{1 - \omega_t, 1 - \omega_t + e^{-\i \theta_k} \omega_t
       \alpha(\tau_L) \right\}.
   \end{align*}
   Hence we obtain the spectral radius
   \[ \varrho(S_{\tau_L}(\theta_k,\omega_t)) = \max \left\{ \abs{1 - \omega_t}, \abs{1 - \omega_t +
       e^{-\i \theta_k} \omega_t \alpha(\tau_L)} \right\}. \]
   Simple calculations lead to
   \begin{align*}
     \abs{1 - \omega_t +
       e^{-\i \theta_k} \omega_t \alpha(\tau_L)}^2 = (1-\omega_t)^2 + 2
   \omega_t (1 - \omega_t) \alpha(\tau_L) \cos(\theta_k) + (\alpha(\tau_L))^2
   \omega_t^2,
   \end{align*}
   which completes the proof.
\end{proof}
 
To proof the convergence of the block Jacobi smoother introduced in
(\ref{chap4_ODESmoother}), we will estimate the spectral radius of the local
iteration matrix $S_{\tau_L}(\theta_k,\omega_t) \in \IC^{N_t \times N_t}$.
\begin{lemma}\label{chap4_lemma5}
   Let $p_t \in \IN_0$ and $\omega_t \in (0,1]$, then the
  spectral radius of the local iteration matrix $S_{\tau_L}(\theta_k,\omega_t)
  = (1-\omega_t) I_{N_t} + e^{-\i \theta_k} \omega_t (K_{\tau_L} + M_{\tau_L})^{-1} N_{\tau_L}$
  is strictly bounded by one, i.e.
   \begin{align*}
    \varrho\left( S_{\tau_L}(\theta_k,\omega_t) \right) < 1.
  \end{align*}
\end{lemma}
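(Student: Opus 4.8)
The plan is to invoke the closed-form expression for the spectral radius furnished by Lemma \ref{chap4_lemma4}, which reduces the claim to bounding two scalar quantities strictly below one, uniformly in the frequency $\theta_k$. By that lemma, $\varrho(S_{\tau_L}(\theta_k,\omega_t)) = \max\{|1-\omega_t|, \hat{S}(\omega_t,\alpha(\tau_L),\theta_k)\}$, so it suffices to control each argument of the maximum. The first is immediate: for $\omega_t \in (0,1]$ one has $1-\omega_t \in [0,1)$, hence $|1-\omega_t| = 1-\omega_t < 1$.

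The essential step is to bound the Pad\'{e} quantity $\alpha(\tau_L)$. The plan is to identify it with the stability function: since $R(z)$ is the $(p_t,p_t+1)$ subdiagonal Pad\'{e} approximant of $e^z$ by Corollary \ref{chap4_corollary1}, and $\alpha(t)$ is the same subdiagonal Pad\'{e} approximant of $e^{-t}$, uniqueness of Pad\'{e} approximants gives $\alpha(\tau_L) = R(-\tau_L) \in \IR$. Because $\tau_L > 0$ implies $\Re(-\tau_L) < 0$, the A-stability statement of Corollary \ref{chap4_corollary1} then yields the key bound $|\alpha(\tau_L)| = |R(-\tau_L)| < 1$.

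With $|\alpha(\tau_L)| < 1$ in hand, I would estimate $\hat{S}^2$ directly from the formula in Lemma \ref{chap4_lemma4}. Using $1-\omega_t \ge 0$, $\omega_t > 0$, and the elementary bound $\alpha(\tau_L)\cos(\theta_k) \le |\alpha(\tau_L)|$, the cross term is dominated by $2\omega_t(1-\omega_t)|\alpha(\tau_L)|$, which completes a square:
\[
\big(\hat{S}(\omega_t,\alpha(\tau_L),\theta_k)\big)^2 \le \big((1-\omega_t)+\omega_t|\alpha(\tau_L)|\big)^2.
\]
Since $0 \le (1-\omega_t)+\omega_t|\alpha(\tau_L)| < (1-\omega_t)+\omega_t = 1$ by the key bound, squaring preserves the inequality and gives $\hat{S} < 1$ for every $\theta_k$. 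Taking the maximum of the two estimates then keeps $\varrho(S_{\tau_L}(\theta_k,\omega_t))$ strictly below one.

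The only genuine obstacle is securing $|\alpha(\tau_L)| < 1$; everything else is elementary estimation. This step hinges on recognizing that $\alpha(\tau_L)$ is precisely the A-stability function evaluated at a point of negative real part, so that A-stability applies. Once that identification is made, the completion-of-square argument is routine, and the strictness of every inequality traces back to the hypotheses $\omega_t > 0$ and $|\alpha(\tau_L)| < 1$; the argument would degenerate at $\omega_t = 0$, consistent with the assumption $\omega_t \in (0,1]$.
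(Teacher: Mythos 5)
Your proposal is correct and follows essentially the same route as the paper's proof: reduce to the two terms of the maximum via Lemma \ref{chap4_lemma4}, identify $\alpha(\tau_L)=R(-\tau_L)$ so that A-stability (Corollary \ref{chap4_corollary1}) gives $\abs{\alpha(\tau_L)}<1$, and bound the cross term by $\abs{\alpha(\tau_L)}$ to complete the square $\left((1-\omega_t)+\omega_t\abs{\alpha(\tau_L)}\right)^2<1$. The paper organizes the final estimate by replacing $\abs{\alpha(\tau_L)}$ with $1$ directly rather than completing the square first, but this is the same computation.
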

\begin{proof}
  In view of Lemma \ref{chap4_lemma4} we have to estimate the function
  \[ \max\left\{\abs{1-\omega_t},\hat{S}(\omega_t, \tau_L,
    \theta_k)\right\}. \]
  For $\omega_t \in (0,1]$ we clearly have that $\abs{1-\omega_t} <
  1$. For $\hat{S}(\omega_t, \tau_L,  \theta_k)$ we estimate
  \begin{align*}
    \abs{\left(\hat{S}(\omega_t, \tau_L, \theta_k)\right)}^2 &= \abs{
      (1-\omega_t)^2 + 2
   \omega_t (1 - \omega_t) \alpha(\tau_L) \cos(\theta_k) + (\alpha(\tau_L))^2
   \omega_t^2}\\
   &\leq (1-\omega_t)^2 + 2 \omega_t (1 - \omega_t) \abs{\alpha(\tau_L)} +
   \abs{\alpha(\tau_L)}^2 \omega_t^2.
  \end{align*}
  Since $\alpha(\tau_L) = R(-\tau_L)$ is the A-stability function for
  $z = -\tau_L$, see Theorem \ref{chap4_theorem2}, and using the fact
  that the discontinuous Galerkin scheme is A-stable, see Corollary
  \ref{chap4_corollary1}, we have $\abs{\alpha(\tau_L)} < 1$ for
  $\tau_L > 0$. Hence we obtain the statement of this lemma with
  \begin{align*}
    \abs{\left(\hat{S}(\omega_t, \tau_L, \theta_k)\right)}^2 
   < (1-\omega_t)^2 + 2 \omega_t (1 - \omega_t) + \omega_t^2
   = \left( 1 -\omega_t + \omega_t \right)^2 = 1.
  \end{align*}
\end{proof}
\begin{theorem}\label{chap4_theorem3}
  For any damping parameter $\omega_t \in (0,1]$, the block Jacobi
    smoother introduced in (\ref{chap4_ODESmoother}) converges for any
    initial guess $\vec{u}^0$ to the exact solution of
    $\mathcal{L}_{\tau_L} \vec{u} = \vec{f}$.
\end{theorem}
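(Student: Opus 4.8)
The plan is to reduce the claim to the spectral-radius condition $\varrho(\mathcal{S}_{\tau_L})<1$ for the error propagation matrix $\mathcal{S}_{\tau_L}=I-\omega_t D_{\tau_L}^{-1}\mathcal{L}_{\tau_L}$ from (\ref{chap4_errorSmoother}). The iteration (\ref{chap4_ODESmoother}) is consistent, since the exact solution $\vec{u}$ of $\mathcal{L}_{\tau_L}\vec{u}=\vec{f}$ is a fixed point, and a stationary linear iteration converges to $\vec{u}$ for every initial guess $\vec{u}^0$ precisely when the spectral radius of its iteration matrix is strictly below one. Hence it suffices to show $\varrho(\mathcal{S}_{\tau_L})<1$.

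First I would make the Kronecker structure explicit. Because $D_{\tau_L}=I_{N_L}\otimes(K_{\tau_L}+M_{\tau_L})$ is exactly the block diagonal of $\mathcal{L}_{\tau_L}=I_{N_L}\otimes(K_{\tau_L}+M_{\tau_L})+U_{N_L}\otimes N_{\tau_L}$, the mixed-product rule gives $D_{\tau_L}^{-1}\mathcal{L}_{\tau_L}=I_{N_L N_t}+U_{N_L}\otimes[(K_{\tau_L}+M_{\tau_L})^{-1}N_{\tau_L}]$, and therefore
\[
\mathcal{S}_{\tau_L}=(1-\omega_t)\,I_{N_L N_t}-\omega_t\,U_{N_L}\otimes\big[(K_{\tau_L}+M_{\tau_L})^{-1}N_{\tau_L}\big].
\]
The key observation is then that $U_{N_L}$ is strictly lower triangular, hence nilpotent, so that $U_{N_L}\otimes[(K_{\tau_L}+M_{\tau_L})^{-1}N_{\tau_L}]$ is block strictly lower triangular and nilpotent. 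Consequently $\mathcal{S}_{\tau_L}$ is block lower triangular with every diagonal block equal to $(1-\omega_t)I_{N_t}$, and since the spectrum of a block triangular matrix is the union of the spectra of its diagonal blocks, I obtain $\sigma(\mathcal{S}_{\tau_L})=\{1-\omega_t\}$ and hence $\varrho(\mathcal{S}_{\tau_L})=\abs{1-\omega_t}$. For $\omega_t\in(0,1]$ we have $\abs{1-\omega_t}=1-\omega_t\in[0,1)$, which is strictly below one and finishes the argument.

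The point that needs care — and the reason this is not a direct corollary of Lemma \ref{chap4_lemma5} — is that Lemma \ref{chap4_lemma5} diagonalizes the \emph{periodic} smoother $\tilde{\mathcal{S}}_{\tau_L}$ built from the circulant $\tilde{U}_{N_L}$, whereas the theorem concerns the genuine, non-periodic operator $\mathcal{L}_{\tau_L}$ with the nilpotent $U_{N_L}$. For the periodic problem the frequencies $\theta_k$ genuinely enter and one really needs the A-stability bound $\abs{\alpha(\tau_L)}<1$ exploited in Lemma \ref{chap4_lemma5}; for the non-periodic problem the missing wrap-around corner makes $U_{N_L}$ nilpotent, which collapses the spectrum to the single value $1-\omega_t$ and renders the A-stability of the scheme unnecessary. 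Thus the main (and rather mild) obstacle is simply to resist carrying the Fourier symbol analysis over to $\mathcal{L}_{\tau_L}$, and instead to read off the spectrum from the triangular Kronecker form; in particular the eigenvalue computation $\sigma((K_{\tau_L}+M_{\tau_L})^{-1}N_{\tau_L})=\{0,\alpha(\tau_L)\}$ of Lemma \ref{chap4_theorem2} is not even required here, since nilpotency of $U_{N_L}$ already forces all eigenvalues of the tensor factor to vanish.
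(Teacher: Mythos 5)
Your proof is correct, but it follows a genuinely different route from the one the paper takes. The paper proves Theorem \ref{chap4_theorem3} by decomposing the initial error into Fourier modes via Lemma \ref{chap4_lemma1}, applying the local symbol of the smoother from Lemma \ref{chap4_lemma3a}, and invoking Lemma \ref{chap4_lemma5} to get $\varrho(S_{\tau_L}(\theta_k,\omega_t))<1$ for every frequency, so that each mode of the error decays; you instead exploit the Kronecker/triangular structure of the genuine (non-periodic) operator, observe that $U_{N_L}$ is nilpotent, and read off $\varrho(\mathcal{S}_{\tau_L})=\abs{1-\omega_t}<1$ directly. Your route is in fact exactly the ``simpler approach'' the authors describe in the remark immediately following the theorem, and it is arguably cleaner as a proof of the literal statement: it avoids the delicate point you correctly flag, namely that Lemma \ref{chap4_lemma3a} is proved for the periodic smoother $\tilde{\mathcal{S}}_{\tau_L}$ built from the circulant $\tilde{U}_{N_L}$, whereas the theorem concerns $\mathcal{S}_{\tau_L}$ (the paper's own proof quietly applies the periodic symbol to the non-periodic operator). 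Your argument also shows convergence for all $\omega_t\in(0,2)$, not just $(0,1]$. What the Fourier route buys in exchange is a quantitative, mode-by-mode (blockwise) decay estimate: because $\mathcal{S}_{\tau_L}$ is a nilpotent perturbation of $(1-\omega_t)I$ it is extremely non-normal, so the spectral radius alone says nothing about transient growth, and the restriction to $\omega_t\in(0,1]$ together with the per-frequency bound of Lemma \ref{chap4_lemma5} is precisely what the authors need later to define and control the smoothing factor $\mu_S$ in the two-grid analysis. So your proof establishes the stated theorem rigorously, but does not replace the frequency-resolved information the paper's proof is really after.
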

\begin{proof}
  For an arbitrary but fixed $n \in \{ 1, \ldots, N_L \}$, the $n$-th
  error component $\vec{e}_n^\nu$ of the $\nu$-th damped block Jacobi
  iteration is given by
  \begin{align*}
    \vec{e}_n^\nu = \left(\mathcal{S}_{\tau_L}^\nu \vec{e}^0\right)_n
    =\left(\mathcal{S}_{\tau_L}^\nu\left( \sum_{\theta_k \in \Theta_L}
         \vec{\psi}^L(\theta_k)\right) \right)_n,
  \end{align*}
  with the initial error  $\vec{e}^0 = \vec{u} - \vec{u}^0$, 
  which we transformed into the
      frequency domain by applying Lemma \ref{chap4_lemma1}.
  The Fourier vectors $\vec{\psi}^L(\theta_k)$, $\theta_k
      \in \Theta_L$ depend on the  constant coefficient matrix $U = U(\vec{e}_0)
      \in \IC^{N_t \times N_t}$ resulting from the initial vector $\vec{e}_0$. Since
      $\mathcal{S}_{\tau_L}^\nu$ is a linear operator, we
      have, using Lemma \ref{chap4_lemma3a},
  \begin{align*}
    \vec{e}_n^\nu = \sum_{\theta_k \in \Theta_L} \left(\mathcal{S}_{\tau_L}^\nu
      \vec{\psi}^L(\theta_k) \right)_n
    %&=  \sum_{\theta_k \in \Theta_L} \left(  (1-\omega_t)
    %  I_{N_t} + e^{-\i \theta_k} \omega_t (K_{\tau_L} + M_{\tau_L})^{-1}
    %  N_{\tau_L} \right) \vec{\psi}_n^L(\theta_k)
    = \sum_{\theta_k \in \Theta_L} (S_{\tau_L}(\theta_k,\omega_t))^\nu \vec{\psi}_n^L(\theta_k).
  \end{align*}
  Now the spectral radius $ \varrho\left(S_{\tau_L}(\theta_k,\omega_t)\right)$ is
  strictly smaller than one, see Lemma \ref{chap4_lemma5}, and we
  conclude that $(S_{\tau_L}(\theta_k,\omega_t))^\nu \rightarrow 0$ as $\nu \rightarrow \infty$.
  This implies that the $n$-th component $\vec{e}_n^\nu$ of the $\nu$-th
  Jacobi iteration converges to zero as $\nu$ tends to infinity, i.e.
  \begin{align*}
     \vec{e}_n^\nu \rightarrow \mathbf{0} \qquad \text{for } \nu \rightarrow
     \infty.
  \end{align*}
  Hence $\vec{u}^\nu \rightarrow \vec{u}$ as the number of iterations $\nu$
  tends to infinity.
\end{proof}

In Theorem \ref{chap4_theorem3} the convergence of the damped
block Jacobi smoother with respect to the blocks is proven for
$\omega_t \in (0,1]$. A simpler approach would be
  to directly compute the spectral radius of the iteration matrix
  \begin{align*}
    \mathcal{S}_{\tau_L} = \begin{pmatrix}
    (1 - \omega_t)I_{N_t} & & & \\
    \omega_t(K_{\tau_L} + M_{\tau_L})^{-1}N_{\tau_L} & (1 - \omega_t)I_{N_t} & & \\  
    & \ddots & \ddots & \\
    & & \omega_t(K_{\tau_L} + M_{\tau_L})^{-1}N_{\tau_L} & (1 - \omega_t)I_{N_t}
  \end{pmatrix},
  \end{align*}
  which simply is $\varrho(\mathcal{S}_{\tau_L}) = \abs{1 -
    \omega_t}$.  Hence the damped block Jacobi smoother converges also
  for a damping parameter $\omega_t \in (0,2)$.  Choosing a damping
  parameter $\omega_t \in (1,2)$ leads indeed also to a
  convergent smoother, but not to a uniformly convergent
  one. This means that the error can grow for some blocks if we use a
  damping parameter $\omega_t \in (1,2)$, and one has to
    be careful using the spectral radius as a criterion in these
    highly non-symmetric cases. For a good smoother, we have to
  use a damping parameter $\omega_t \in (0,1]$.

For a good multigrid scheme, we need that the smoother reduces
the error in the high frequencies
$\Theta^{\mathrm{high}}$ efficiently. Theorem \ref{chap4_theorem3}
motivates 
\begin{definition}[Asymptotic smoothing factor]
  For the damped block Jacobi iteration introduced in
  (\ref{chap4_ODESmoother}), we define the asymptotic smoothing factor as
  \begin{align*}
    \mu_S := \max\left\{ \varrho\left(S_{\tau_L}(\theta_k,\omega_t)\right) : \theta_k \in
      \Theta_L^{\mathrm{high}}
    \text{ and } n \in \{1,\ldots,N_L\} \right\}
  \end{align*}
  with
  \begin{align*}
     S_{\tau_L}(\theta_k,\omega_t) = (1-\omega_t) I_{N_t} + e^{-\i \theta_k} \omega_t (K_{\tau_L} +
     M_{\tau_L})^{-1} N_{\tau_L}.
  \end{align*}
\end{definition}
To analyze the smoothing behavior, we will need
\begin{lemma}\label{chap4_lemma6}
  Let $\alpha \in \IR$ with $\alpha \geq -1$. Then for the function
  \begin{align*}
   \left(\hat{S}(\omega_t, \alpha, \theta_k)\right)^2 = (1-\omega_t)^2 + 2
   \omega_t (1 - \omega_t) \alpha \cos(\theta_k) + \alpha^2 \omega_t^2,
 \end{align*}
 the min-max principle
  \begin{align*}
    \inf_{\omega_t \in (0,1]} \sup_{\theta_k \in [\frac{\pi}{2},\pi]}
    \hat{S}(\omega_t, \alpha, \theta_k) = \begin{cases}
      \frac{\alpha}{\sqrt{1+\alpha^2}} & \alpha \geq 0,\\ \abs{\alpha} &
     \alpha < 0, \end{cases} \in [0,1]
  \end{align*}
  holds with the asymptotically optimal parameter
  \begin{align*}
    \omega_t^\ast = \begin{cases} \frac{1}{1+\alpha^2} & \alpha \geq 0,\\ 1
      &\alpha < 0 \end{cases} \quad \text{and} \quad \theta^\ast
    = \begin{cases} \frac{\pi}{2} & \alpha \geq 0,\\ \pi & \alpha <
      0. \end{cases}
  \end{align*}
\end{lemma}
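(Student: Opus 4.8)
**The plan is to solve the min-max problem directly by analyzing the function $\hat S$ as a function of $\theta_k$ and $\omega_t$ separately.** The quantity $(\hat S)^2$ is a simple quadratic in $\cos(\theta_k)$, so the inner supremum over $\theta_k \in [\frac{\pi}{2}, \pi]$ reduces to optimizing over $\cos(\theta_k) \in [-1, 0]$. First I would treat the sign of the coefficient $2\omega_t(1-\omega_t)\alpha$ of $\cos(\theta_k)$. Since $\omega_t \in (0,1]$ gives $\omega_t > 0$ and $1-\omega_t \ge 0$, the sign of this coefficient is governed entirely by the sign of $\alpha$.

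\emph{The case $\alpha < 0$ is the easy one.} Here the coefficient of $\cos(\theta_k)$ is nonpositive, so $(\hat S)^2$ is maximized over $\cos(\theta_k) \in [-1,0]$ by taking $\cos(\theta_k)$ as negative as possible, i.e. $\cos(\theta_k) = -1$, which corresponds to $\theta_k = \pi =: \theta^\ast$. At this point $(\hat S)^2 = (1-\omega_t)^2 - 2\omega_t(1-\omega_t)|\alpha| + \alpha^2\omega_t^2 = (1 - \omega_t - \omega_t|\alpha|)^2$ after recognizing the perfect square (using $\alpha = -|\alpha|$). Taking the infimum over $\omega_t \in (0,1]$, I would note this is minimized by pushing $1-\omega_t-\omega_t|\alpha|$ toward zero; since $|\alpha| = -\alpha < 1$ is not guaranteed in general, but regardless the expression $(1-\omega_t(1+|\alpha|))$ can be driven down, and checking the boundary $\omega_t = 1$ gives value $|\alpha|$, yielding $\omega_t^\ast = 1$ and infimum $|\alpha|$.

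\emph{The case $\alpha \ge 0$ is the substantive one.} Now the coefficient of $\cos(\theta_k)$ is nonnegative, so over $\cos(\theta_k) \in [-1,0]$ the supremum is attained at $\cos(\theta_k) = 0$, i.e. $\theta_k = \frac{\pi}{2} =: \theta^\ast$, giving $\sup_\theta (\hat S)^2 = (1-\omega_t)^2 + \alpha^2\omega_t^2$. The remaining task is the outer minimization $\inf_{\omega_t \in (0,1]}\left[(1-\omega_t)^2 + \alpha^2\omega_t^2\right]$. This is a smooth convex quadratic in $\omega_t$; differentiating and setting the derivative to zero gives $-2(1-\omega_t) + 2\alpha^2\omega_t = 0$, hence the critical point $\omega_t^\ast = \frac{1}{1+\alpha^2}$. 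I would verify this lies in $(0,1]$ (it does, since $\alpha^2 \ge 0$), substitute back, and simplify to obtain the minimal value $\frac{\alpha^2}{1+\alpha^2}$, whose square root is $\frac{\alpha}{\sqrt{1+\alpha^2}}$ as claimed.

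\textbf{The main obstacle} is not any single hard computation but rather the care needed to justify interchanging or composing the sup and inf correctly and to confirm that the separate optimizations of $\theta_k$ and $\omega_t$ genuinely combine into the joint min-max value. Because the inner supremum over $\theta_k$ turns out to be attained at an endpoint ($\theta^\ast = \frac{\pi}{2}$ or $\pi$) independently of $\omega_t$ in each sign regime, the problem decouples cleanly and the min-max equals the value at the saddle point $(\omega_t^\ast, \theta^\ast)$; I would make this decoupling explicit so that no subtle order-of-optimization issue is glossed over. A minor secondary point is confirming that $\hat S \ge 0$ so that minimizing $\hat S$ is equivalent to minimizing $(\hat S)^2$, and checking the range inclusion in $[0,1]$ using $\alpha \ge -1$.
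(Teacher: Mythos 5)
Your overall strategy is exactly the paper's: for each fixed $\omega_t$ the inner supremum over $\theta_k$ is an optimization of a linear function of $\cos(\theta_k)$ over $[-1,0]$, attained at the endpoint $\theta^\ast=\frac{\pi}{2}$ or $\pi$ according to the sign of $\alpha$, after which a one-dimensional minimization in $\omega_t$ finishes the job. The $\alpha\geq 0$ branch is correct and complete, and since you evaluate $\sup_{\theta_k}$ pointwise in $\omega_t$ before taking $\inf_{\omega_t}$, no minimax-interchange issue arises; the worry you raise at the end is not a real obstacle.

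The $\alpha<0$ branch, however, contains a sign error that makes your conclusion not follow from your own formula. At $\theta_k=\pi$ the cross term is $2\omega_t(1-\omega_t)\alpha\cos(\pi)=-2\omega_t(1-\omega_t)\alpha=+2\omega_t(1-\omega_t)\abs{\alpha}$, so
\begin{align*}
\left(\hat S(\omega_t,\alpha,\pi)\right)^2=\bigl((1-\omega_t)+\omega_t\abs{\alpha}\bigr)^2=\bigl(1-\omega_t(1-\abs{\alpha})\bigr)^2,
\end{align*}
not $\bigl(1-\omega_t(1+\abs{\alpha})\bigr)^2$ as you wrote. The distinction matters: your expression vanishes at $\omega_t=\tfrac{1}{1+\abs{\alpha}}\in(0,1]$, so its infimum over $(0,1]$ would be $0$, contradicting the claimed value $\abs{\alpha}$; your remark that the expression "can be driven down" followed by "checking the boundary $\omega_t=1$" does not yield an infimum. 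With the correct expression, the hypothesis $\alpha\geq -1$ gives $\abs{\alpha}\leq 1$, hence $1-\abs{\alpha}\geq 0$, so $1-\omega_t(1-\abs{\alpha})\geq\abs{\alpha}\geq 0$ is nonincreasing on $(0,1]$ and the infimum $\abs{\alpha}$ is attained at $\omega_t^\ast=1$ --- this is also the only place the assumption $\alpha\geq -1$ is actually used, and your write-up never invokes it. (For what it is worth, the paper's printed proof displays the same erroneous intermediate identity $\bigl(1-\omega_t(1+\abs{\alpha})\bigr)^2$ while stating the correct final answer, so you are in good company, but the step still needs the fix above to be a proof.)
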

\begin{proof}
  Since $\hat{S}(\omega_t, \alpha, \theta_k) \geq 0$, we will study the function
  \[ \left(\hat{S}(\omega_t, \alpha, \theta_k)\right)^2 = (1-\omega_t)^2 + 2
   \omega_t (1 - \omega_t) \alpha \cos(\theta_k) + \alpha^2 \omega_t^2. \]
  For $\omega_t \in (0,1]$, only the terms with $\alpha$ and
  $\cos(\theta_k)$ can become negative. We thus consider first the case 
  $\alpha \geq 0$. We then simply have
  \[ 
    \argsup_{\theta_k \in [\frac{\pi}{2},\pi]} \hat{S}(\omega_t, \alpha,
    \theta_k) = \frac{\pi}{2} \qquad \text{for } \omega_t \in (0,1], 
   \]
  which leads to 
  \begin{align*}
   \inf_{\omega_t \in (0,1]} \sup_{\theta_k \in [\frac{\pi}{2},\pi]}
    \hat{S}(\omega_t, \alpha, \theta_k) = \inf_{\omega_t \in (0,1]}
    \hat{S}(\omega_t, \alpha, \frac{\pi}{2}).
  \end{align*}
  Since
   $\left(\hat{S}(\omega_t, \alpha, \frac{\pi}{2})\right)^2 = (1-\omega_t)^2 +
     \alpha^2 \omega_t^2$,
  we find that
  \[ \arginf_{\omega_t \in (0,1]} \hat{S}(\omega_t, \alpha,
  \frac{\pi}{2}) = \frac{1}{1+\alpha^2} \quad \text{and} \quad
  \hat{S}(\frac{1}{1+\alpha^2}, \alpha,
  \frac{\pi}{2}) = \frac{\alpha}{\sqrt{1+\alpha^2}}. \]
  For the case $\alpha < 0$ we have
   \[ \argsup_{\theta_k \in [\frac{\pi}{2},\pi]} \hat{S}(\omega_t, \alpha,
   \theta_k) = \pi \qquad \text{for } \omega_t \in (0,1]. \]
  Because of
  \begin{align*}
    \left(\hat{S}(\omega_t, \alpha, \pi)\right)^2 = (1-\omega_t)^2 - 2 \omega_t
    (1 - \omega_t) \abs{\alpha} + \abs{\alpha}^2 \omega_t^2 = \left( 1 -
      \omega_t (1 + \abs{\alpha}) \right)^2,
  \end{align*}
  we find that
  \begin{align*}
    \arginf_{\omega_t \in (0,1]} \hat{S}(\omega_t, \alpha,
  \pi) = 1 \quad \text{and} \quad \hat{S}(1, \alpha, \pi) = \abs{\alpha},
  \end{align*}
  which completes the proof.
\end{proof}

The next lemma shows that the asymptotic smoothing factor $\mu_S$ is
strictly bounded by $\frac{1}{\sqrt{2}}$, if we use the optimal
damping parameter $\omega_t^\ast = \omega_t^\ast(\tau_L)$.
\begin{lemma}\label{chap4_lemma7}
  For the optimal choice of the damping parameter
  \begin{align*}
    \omega_t^\ast(\tau_L) := \begin{cases}
      \frac{1}{1+\left(\alpha(\tau_L)\right)^2} &
      \alpha(\tau_L) \geq 0,\\ 1
      &\alpha(\tau_L) < 0 \end{cases}
  \end{align*}
  the smoothing factor $\mu_S$ of the damped block Jacobi iteration
  (\ref{chap4_ODESmoother}) satisfies
  $\mu_S \leq \frac{1}{\sqrt{2}}$. 
\end{lemma}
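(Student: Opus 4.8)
The plan is to reduce the statement to the min-max principle of Lemma~\ref{chap4_lemma6} and then to control the two quantities entering the spectral radius from Lemma~\ref{chap4_lemma4}. By definition, $\mu_S = \max\{ \varrho(S_{\tau_L}(\theta_k,\omega_t^\ast)) : \theta_k \in \Theta_L^{\mathrm{high}}\}$, and Lemma~\ref{chap4_lemma4} gives $\varrho(S_{\tau_L}(\theta_k,\omega_t^\ast)) = \max\{\abs{1-\omega_t^\ast}, \hat{S}(\omega_t^\ast,\alpha(\tau_L),\theta_k)\}$. Since $\hat{S}$ depends on the frequency only through $\cos(\theta_k)$, which is even, the discrete maximum over $\theta_k \in \Theta_L^{\mathrm{high}}$ is bounded by the continuous supremum over $\theta \in [\frac{\pi}{2},\pi]$, so that
\[ \mu_S \leq \max\Big\{ \abs{1-\omega_t^\ast}, \sup_{\theta \in [\frac{\pi}{2},\pi]} \hat{S}(\omega_t^\ast, \alpha(\tau_L), \theta) \Big\}. \]
First I would note that, by Corollary~\ref{chap4_corollary1} together with Lemma~\ref{chap4_theorem2}, $\alpha(\tau_L) = R(-\tau_L)$ satisfies $\abs{\alpha(\tau_L)} < 1$, hence in particular $\alpha(\tau_L) \geq -1$, so that Lemma~\ref{chap4_lemma6} applies.

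Next I would split into the two cases appearing in the definition of $\omega_t^\ast(\tau_L)$ and use Lemma~\ref{chap4_lemma6} to evaluate the supremum. In the case $\alpha := \alpha(\tau_L) \geq 0$, the optimal parameter is $\omega_t^\ast = \frac{1}{1+\alpha^2}$, so the supremum equals $\frac{\alpha}{\sqrt{1+\alpha^2}}$, while the first term is $\abs{1-\omega_t^\ast} = \frac{\alpha^2}{1+\alpha^2}$. A short computation shows $\frac{\alpha^2}{1+\alpha^2} \leq \frac{\alpha}{\sqrt{1+\alpha^2}}$ (equivalently $\alpha \leq \sqrt{1+\alpha^2}$), so the supremum term dominates and $\mu_S = \frac{\alpha}{\sqrt{1+\alpha^2}}$. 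Since $\xi \mapsto \frac{\xi}{\sqrt{1+\xi^2}}$ is increasing and equals $\frac{1}{\sqrt{2}}$ at $\xi = 1$, and since $\alpha < 1$ by A-stability, this yields the desired strict bound $\mu_S < \frac{1}{\sqrt{2}}$ in this case.

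In the case $\alpha < 0$, the optimal parameter is $\omega_t^\ast = 1$, whence $\abs{1-\omega_t^\ast}=0$ and $\hat{S}(1,\alpha,\theta) = \abs{\alpha}$ for every $\theta$, so that $\mu_S = \abs{\alpha}$. The bound then reduces to showing $\abs{\alpha(\tau_L)} \leq \frac{1}{\sqrt{2}}$, i.e.\ that the subdiagonal Pad\'e approximant never falls below $-\frac{1}{\sqrt{2}}$ on the negative real axis, $R(-\tau_L) \geq -\frac{1}{\sqrt{2}}$ for all $\tau_L > 0$. This last estimate is the main obstacle: A-stability alone only gives $\abs{\alpha} < 1$, which is not enough, so one must exploit the specific structure of the $(p_t,p_t+1)$ Pad\'e approximation of $e^{-t}$ to control how negative it can become (for instance, checking that its only negative values on $(0,\infty)$ are small in modulus, as is already the case for low $p_t$). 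I would establish this bound on $\min_{\tau > 0} R(-\tau)$ using the explicit rational form of the approximant together with its decay as $\tau \to \infty$, and combining the two cases then completes the proof.
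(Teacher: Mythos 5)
Your proposal follows essentially the same route as the paper's proof: reduce to the spectral radius formula of Lemma~\ref{chap4_lemma4}, use the evenness of $\hat S$ in $\theta_k$ to pass to the supremum over $[\frac{\pi}{2},\pi]$, and then invoke the min-max principle of Lemma~\ref{chap4_lemma6} in the two cases $\alpha\geq 0$ and $\alpha<0$. Your handling of the case $\alpha\geq 0$ is complete and correct, including the check that $\abs{1-\omega_t^\ast}=\frac{\alpha^2}{1+\alpha^2}\leq\frac{\alpha}{\sqrt{1+\alpha^2}}\leq\frac{1}{\sqrt 2}$ (the paper dispatches the comparison with $\abs{1-\omega_t^\ast}$ by a one-line remark, but the content is the same). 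Minor quibble: since the discrete maximum over $\Theta_L^{\mathrm{high}}$ is only bounded by the continuous supremum, your ``$\mu_S=$'' in both cases should be ``$\mu_S\leq$''; this does not affect the conclusion.

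The one genuine gap is exactly the point you flag yourself: in the case $\alpha<0$ you need $\abs{\alpha(\tau_L)}\leq\frac{1}{\sqrt 2}$, and you leave this as a to-do, sketching only that one should ``use the explicit rational form of the approximant together with its decay as $\tau\to\infty$.'' That sketch is not adequate as stated, because the approximant is a different rational function for every $p_t$ and the lemma claims the bound for all polynomial degrees, so a degree-by-degree computation with no uniformity argument does not close the case. The paper fills this hole by invoking the known uniform bound $\frac{5-3\sqrt 3}{2}\leq\alpha(\tau_L)\leq 1$ for all $\tau_L\geq 0$ and all $p_t$, i.e.\ the subdiagonal Pad\'e approximants of $e^{-t}$ never drop below $\frac{5-3\sqrt 3}{2}\approx-0.098$ on the positive real axis (this extremal value is attained by the $(1,2)$ approximant at $\tau=3+3\sqrt 3$), which makes the negative case immediate since $0.098\ll\frac{1}{\sqrt 2}$. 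So your structure is right and you correctly located the only non-routine ingredient, but the proof is incomplete until that uniform lower bound on the Pad\'e approximant is either proved or properly cited.
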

\begin{proof}
  In view of Lemma \ref{chap4_lemma4} we have to estimate
  \begin{align*}
    \max_{\theta_k \in \Theta_L^{\mathrm{high}}}\left\{\abs{1-\omega_t^\ast},
      \hat{S}(\omega_t^\ast, \alpha(\tau_L), \theta_k)\right\}
  \end{align*}
  with
 \begin{align*}
   \left(\hat{S}(\omega_t^\ast, \alpha, \theta_k)\right)^2 =
   (1-\omega_t^\ast)^2 + 2
   \omega_t^\ast (1 - \omega_t^\ast) \alpha \cos(\theta_k) + \alpha^2
   (\omega_t^\ast)^2.
 \end{align*}
 Since $\hat{S}(\omega_t^\ast, \alpha, \theta_k)$ is symmetric with respect to
 the frequencies $\theta_k$, we only have to estimate the function
 $\hat{S}(\omega_t^\ast, \alpha, \theta_k)$ for the frequencies $\theta_k \in
 \Theta_L^{\mathrm{high}} \cap [\frac{\pi}{2}, \pi].$ Applying Lemma
 \ref{chap4_lemma6} for $\alpha = \alpha(\tau_L)$ gives the estimate
 \begin{align}\label{chap4_proofLemma7Equ1}
   \max_{\theta_k \in  \Theta_L^{\mathrm{high}}} \hat{S}(\omega_t^\ast, \alpha(\tau_L),
   \theta_k) \leq \sup_{\theta_k \in [\frac{\pi}{2},\pi]}
    \hat{S}(\omega_t^\ast, \alpha(\tau_L), \theta_k) = \begin{cases}
      \frac{\alpha(\tau_L)}{\sqrt{1+\left(\alpha(\tau_L)\right)^2}} &
      \alpha(\tau_L) \geq 0,\\ \abs{\alpha(\tau_L)} &
     \alpha(\tau_L) < 0. \end{cases}
 \end{align}
 Since $\alpha(\tau_L)$ is the $(p_t,p_t+1)$ subdiagonal Pad\'{e}
 approximation of the exponential function, see Lemma \ref{chap4_lemma4}, we
 have
 \[ -0.0980762 \approx \frac{5-3 \sqrt{3}}{2} \leq \alpha(\tau_L) \leq 1 \qquad
 \text{for } \tau_L \geq 0. \]
Combining this estimate with the results of (\ref{chap4_proofLemma7Equ1}) yields
\begin{align*}
   \max_{\theta_k \in  \Theta_L^{\mathrm{high}}} \hat{S}(\omega_t^\ast, \alpha,
   \theta_k) \leq \begin{cases}
      \frac{1}{\sqrt{2}} & \alpha \geq 0,\\ \frac{3 \sqrt{3}-5}{2} &
     \alpha < 0, \end{cases} \leq  \frac{1}{\sqrt{2}}.
\end{align*}
Simple calculations show that 
\[ \sup_{\theta_k \in [\frac{\pi}{2},\pi]}
    \hat{S}(\omega_t^\ast, \alpha, \theta_k) \geq \abs{1-\omega_t^\ast}, \]
which completes the proof.
\end{proof}

Because $\alpha(\tau_L)$ is the $(p_t,p_t+1)$ subdiagonal Pad\'{e}
approximation of the exponential function $e^{-t}$, we have that
$\alpha(\tau_L) \rightarrow 1$ as $\tau_L \rightarrow 0$, and hence
$\omega^\ast \approx \frac{1}{2}$ for $\tau_L$ close to zero, see 
Figure \ref{chap4_plot1_best_damping}. It turns out that the estimate
of Lemma \ref{chap4_lemma7} also holds for a uniform damping
parameter $\omega^\ast = \frac{1}{2}$. But for large time steps
$\tau_L$, better smoothing behavior is obtained when the optimal
damping parameter $\omega^\ast = \omega^\ast(\tau_L)$ as given in
Lemma \ref{chap4_lemma7} is used.
% \marginpar{Could one make the legends and texts in the figure a bit bigger?}
\begin{figure}
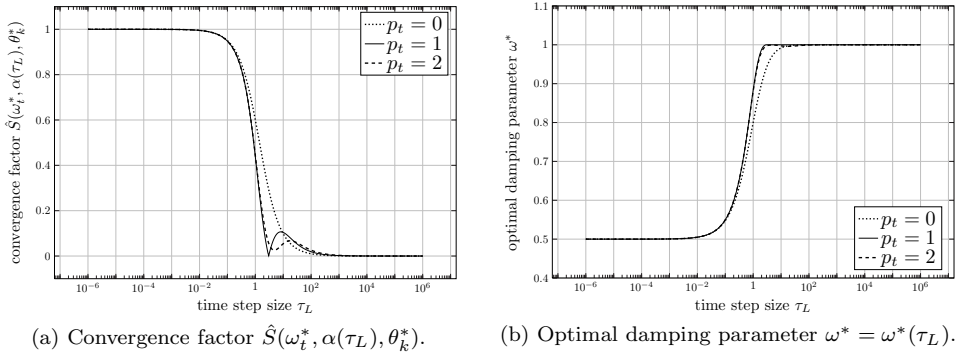

  \centering
  \subfloat[Convergence factor $\hat{S}(\omega_t^\ast, \alpha(\tau_L),
	\theta_k^\ast)$.]
  {
    \scalebox{0.43}{\input{./figures/plots/plotODEconvergence.tex}}
    \label{chap4_plot_convergence_smoother}
  }\hfill
  \subfloat[Optimal damping parameter $\omega^\ast = \omega^\ast(\tau_L)$.]
  {
      \scalebox{0.43}{\input{./figures/plots/plotODEomega.tex}}
      \label{chap4_plot1_best_damping}
  }
  \caption{Convergence factor $\hat{S}(\omega_t^\ast, \alpha(\tau_L),
	\theta_k^\ast)$ and optimal damping parameter  $\omega^\ast$}\label{chap4_plots_smoother}
\end{figure}

To show the convergence behavior of the damped block Jacobi smoother
(\ref{chap4_ODESmoother}) with respect to the time step size $\tau_L$, we 
now prove the following lemma for an arbitrary $\alpha \in \IR$.
\begin{lemma}\label{chap4_lemma8}
  For $\alpha \in \IR$ and the optimal choice for the damping parameter 
  \begin{align*}
    \omega_t^\ast = \begin{cases} \frac{1}{1+\alpha^2} & \alpha \geq 0,\\ 1
      &\alpha < 0, \end{cases}
  \end{align*}
  we have the estimate
  \begin{align*}
    \max_{\theta_k \in \Theta_L} \hat{S}(\omega_t^\ast,\alpha, \theta_k) \leq 
      \frac{\abs{\alpha}(1+\abs{\alpha})}{1+\alpha^2}.
  \end{align*}
\end{lemma}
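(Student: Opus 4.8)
The plan is to reduce the maximization over the discrete frequency set $\Theta_L$ to an elementary optimization over $\cos(\theta_k)$, and then to treat the two sign regimes of $\alpha$ separately, exactly matching the case split in the definition of $\omega_t^\ast$. First I would note that $\hat{S}(\omega_t^\ast,\alpha,\theta_k) \geq 0$, so it suffices to maximize its square, and that
\[
  \left(\hat{S}(\omega_t^\ast, \alpha, \theta_k)\right)^2 = (1-\omega_t^\ast)^2 + 2\omega_t^\ast(1-\omega_t^\ast)\alpha\cos(\theta_k) + \alpha^2(\omega_t^\ast)^2
\]
is \emph{affine} in $c := \cos(\theta_k)$. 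Since $\theta_k \in \Theta_L \subset (-\pi,\pi]$ forces $c \in [-1,1]$, the maximum over $\Theta_L$ is bounded by the maximum of an affine function on $[-1,1]$, which is attained at one of the endpoints $c = \pm 1$; the relevant endpoint is fixed by the sign of the coefficient $2\omega_t^\ast(1-\omega_t^\ast)\alpha$ of $c$. Because $\theta_k = 0$ and $\theta_k = \pi$ both lie in $\Theta_L$, this endpoint bound is in fact exact for the discrete set.

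For $\alpha \geq 0$ I would use $\omega_t^\ast = \frac{1}{1+\alpha^2} \in (0,1]$, so that $1-\omega_t^\ast \geq 0$ and the coefficient of $c$ is nonnegative; hence the maximum is attained at $c = 1$, i.e.\ $\theta_k = 0$. There the square collapses to the perfect square $\left(1 - \omega_t^\ast + \alpha\omega_t^\ast\right)^2$, and inserting $\omega_t^\ast = \frac{1}{1+\alpha^2}$ gives
\[
  1 - \omega_t^\ast(1-\alpha) = \frac{\alpha^2+\alpha}{1+\alpha^2} = \frac{\alpha(1+\alpha)}{1+\alpha^2} \geq 0,
\]
so that $\max_{\theta_k \in \Theta_L} \hat{S} = \frac{\alpha(1+\alpha)}{1+\alpha^2}$, which is exactly the claimed bound, attained here with equality. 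It is worth emphasizing that the worst frequency in this case is a \emph{low} one ($\theta_k = 0$), which is precisely why the estimate must range over all of $\Theta_L$ and not only over $\Theta_L^{\mathrm{high}}$ as in Lemma \ref{chap4_lemma7}.

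For $\alpha < 0$ I would use $\omega_t^\ast = 1$, which collapses the expression entirely: the first two terms vanish and $\left(\hat{S}(1,\alpha,\theta_k)\right)^2 = \alpha^2$ independently of $\theta_k$, so $\max_{\theta_k \in \Theta_L}\hat{S} = \abs{\alpha}$. It then remains to verify $\abs{\alpha} \leq \frac{\abs{\alpha}(1+\abs{\alpha})}{1+\alpha^2}$, which after cancelling $\abs{\alpha}$ reduces to $1+\alpha^2 \leq 1+\abs{\alpha}$, i.e.\ $\abs{\alpha} \leq 1$. This is the one genuinely non-automatic step: the inequality fails once $\abs{\alpha} > 1$, so I would invoke the admissible range of the Pad\'e symbol established in the proof of Lemma \ref{chap4_lemma7}, namely $\alpha(\tau_L) \geq \frac{5-3\sqrt{3}}{2} > -1$, which guarantees $\abs{\alpha} \leq 1$ in the regime $\alpha < 0$. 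The main obstacle is thus conceptual rather than computational: one must recognize that for negative $\alpha$ the bound is a genuine (non-sharp) inequality tied to the range $\alpha \geq -1$, whereas for nonnegative $\alpha$ it is sharp and realized at the lowest frequency.
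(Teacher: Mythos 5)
Your proof is correct, and for $\alpha \geq 0$ it coincides with the paper's: both reduce to the value at $\theta_k=0$, where the expression is a perfect square equal to $\bigl(\tfrac{\alpha(1+\alpha)}{1+\alpha^2}\bigr)^2$, so the bound is sharp there. Where you genuinely diverge is the case $\alpha<0$. The paper's proof substitutes $\omega_t^\ast=\tfrac{1}{1+\alpha^2}$ into $\hat S^2$ \emph{before} splitting into cases, obtaining $\tfrac{\alpha^2}{1+\alpha^2}+\tfrac{2\alpha^3}{(1+\alpha^2)^2}\cos\theta_k$ and evaluating it at $\theta_k=\pi$ to get exactly $\tfrac{\alpha^2(1+\abs{\alpha})^2}{(1+\alpha^2)^2}$ for every real $\alpha$ --- but this silently contradicts the lemma's own definition of $\omega_t^\ast=1$ for $\alpha<0$. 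You instead honor the stated case distinction: with $\omega_t^\ast=1$ the symbol degenerates to $\abs{\alpha}$ for all frequencies, and the claimed bound then reduces to $1+\alpha^2\le 1+\abs{\alpha}$, i.e.\ $\abs{\alpha}\le 1$. You are right that this is the one non-automatic step and that it fails for $\alpha<-1$; strictly speaking the lemma as stated ``for $\alpha\in\IR$'' is only valid in the negative branch under the restriction $\alpha\ge -1$ (the hypothesis already present in Lemma \ref{chap4_lemma6}, and satisfied by the Pad\'e symbol $\alpha(\tau_L)\ge\tfrac{5-3\sqrt3}{2}$). So your route buys fidelity to the statement of $\omega_t^\ast$ and exposes the implicit hypothesis, at the price of a non-sharp inequality in the negative branch, whereas the paper's route gives the bound with equality for all $\alpha$ but for a damping parameter other than the one it announces. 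Your observation that the worst frequency for $\alpha\ge 0$ is the low frequency $\theta_k=0$, which is why the maximum must be taken over all of $\Theta_L$, is also correct and worth keeping.
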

\begin{proof}
  For the optimal damping parameter $\omega_t^\ast$, we have
  \[ \left(\hat{S}(\omega_t^\ast,\alpha, \theta_k)\right)^2 =
  \frac{\alpha^2}{1+\alpha^2} + \frac{2\alpha^3}{(1+\alpha^2)^2}
  \cos(\theta_k).\]
  For the case $\alpha \geq 0$ we therefore obtain
  \begin{align*}
    \argsup_{\theta_k \in [0,\pi]} \hat{S}(\omega_t^\ast,\alpha,
    \theta_k) = 0.
  \end{align*}
  Thus we have
  \[ \left( \hat{S}(\omega_t^\ast,\alpha, 0) \right)^2 = \frac{\alpha^2 +
    \alpha^4 + 2\alpha^3}{(1+\alpha^2)^2} =
  \frac{\alpha^2(1+\alpha)^2}{(1+\alpha^2)^2}. \]
  For the case  $\alpha < 0$ we find that 
   \begin{align*}
    \argsup_{\theta_k \in [0,\pi]} \hat{S}(\omega_t^\ast,\alpha,
    \theta_k) = \pi
  \end{align*}
  and thus
  \[ \left( \hat{S}(\omega_t^\ast,\alpha, 0) \right)^2 = \frac{\alpha^2 +
    \alpha^4 + 2\abs{\alpha}^3}{(1+\alpha^2)^2} =
  \frac{\alpha^2(1+\abs{\alpha})^2}{(1+\alpha^2)^2}. \]
  The statement of this lemma follows with the fact that
  \begin{align*}
    \max_{\theta_k \in \Theta_L} \hat{S}(\omega_t^\ast,\alpha, \theta_k) \leq
    \sup_{\theta_k \in [0,\pi]} \hat{S}(\omega_t^\ast,\alpha, \theta_k).
  \end{align*}
\end{proof}

\begin{remark}\label{chap4_remarkSmootherSolver}
  For sufficiently small values of $\alpha = \alpha(\tau_L)$, i.e. for
  sufficiently large time step sizes $\tau_L$, it is shown in Lemma
  \ref{chap4_lemma8} that the convergence factor
  $\hat{S}(\omega_t^\ast, \alpha(\tau_L), \theta_k^\ast)$ of the block
  Jacobi smoother (\ref{chap4_ODESmoother}) is close to zero, see
  Figure \ref{chap4_plot_convergence_smoother}. Hence the block Jacobi
  smoother (\ref{chap4_ODESmoother}) is already a very good iterative
  solver.
\end{remark}

All the estimates above are valid for arbitrary polynomial degrees
$p_t \in \IN_0$. For the limiting case $p_t \rightarrow \infty$, the
function $\alpha(\tau_L)$ is given by
\[ 
  \alpha(\tau_L) = e^{-\tau_L}, 
\]
since $\alpha(t)$ is the $(p_t,p_t+1)$ subdiagonal Pad\'{e}
approximation of the exponential function $e^{-t}$. Hence, the choice
of the best damping parameter $\omega^\ast$ and the smoothing factors
converge also to a limit function.

\subsection{Two-grid analysis}

We turn our attention now to the two-grid cycle for
(\ref{chap4_equationLinearSystemODE}), for which the error satisfies
\begin{align}\label{chap4_errorTwoGridCycle2}
  \vec{e}^{k+1} = \mathcal{M}_{\tau_L} \vec{e}^k := \mathcal{S}_{\tau_L}^{\nu_2}
  \left[ I - \mathcal{P}^L \mathcal{L}_{2 \tau_L}^{-1} \mathcal{R}^L
    \mathcal{L}_{\tau_L} \right] \mathcal{S}_{\tau_L}^{\nu_1} \vec{e}^k.
\end{align}
We use again Fourier mode analysis, which would be exact for time
periodic problems, see (\ref{chap4_periodicBC}). We thus need to
compute the Fourier symbol of the two-grid iteration matrix
$\mathcal{M}_{\tau_L}$. In Lemma \ref{chap4_lemma3} we already derived
the local Fourier symbol for the system matrix $\tilde{\mathcal{L}}_{\tau_L}$,
\[ 
  \hat{\mathcal{L}}_{\tau_L}(\theta_k) := K_{\tau_L} + M_{\tau_L} -
    e^{-\i \theta_k} N_{\tau_L} \in \IC^{N_t \times N_t},
\]
and the local Fourier symbol for the smoother
$\tilde{\mathcal{S}}_{\tau_L}^\nu$ is given by
\[ 
  \hat{\mathcal{S}}_{\tau_L}^\nu(\theta_k, \omega_t) := \left( (1-\omega_t)
      I_{N_t} + e^{-\i \theta_k} \omega_t (K_{\tau_L} + M_{\tau_L})^{-1}
      N_{\tau_L}\right)^\nu  \in \IC^{N_t \times N_t}, 
\]
see Lemma \ref{chap4_lemma3a}. For the local parts it is convenient
to use the so called stencil notation: for the
system matrix $\tilde{\mathcal{L}}_{\tau_L}$, its stencil is
\begin{align*}
 \widetilde{\mathcal{L}}_{\tau_L} := \begin{bmatrix} -N_{\tau_L} &
   K_{\tau_L}+M_{\tau_L} & 0  \end{bmatrix},
\end{align*}
and one smoothing iteration $\tilde{\mathcal{S}}_{\tau_L}^\nu$, $\nu = 1$, is given in
stencil notation by
\begin{align*}
 \widetilde{\mathcal{S}}_{\tau_L}^1 := \begin{bmatrix} -\omega_t (K_{\tau_L} +
   M_{\tau_L})^{-1}N_{\tau_L} & (1 - \omega_t)I_{N_t} & 0  \end{bmatrix}.
\end{align*}
 Using periodic boundary conditions leads to the mapping properties
\begin{align}\label{chap4_mappingProperties1}
 \tilde{\mathcal{L}}_{\tau_L} : \Psi_L(\theta_k) \rightarrow \Psi_L(\theta_k)
\quad \text{and} \quad \tilde{\mathcal{S}}_{\tau_L}^\nu : \Psi_L(\theta_k) \rightarrow
\Psi_L(\theta_k).
\end{align}
We next analyze the mapping properties of the restriction and
the prolongation operators, for which we need
\begin{lemma}\label{chap4_lemma9}
  The mapping $\gamma : \Theta_L^{\mathrm{low}} \rightarrow
  \Theta_L^{\mathrm{high}}$ with
  $\gamma(\theta_k) := \theta_k - \mathrm{sign}(\theta_k) \pi$
  is a one to one mapping.
\end{lemma}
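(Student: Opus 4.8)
The plan is to establish that $\gamma$ is a bijection by proving two things: that $\gamma$ maps $\Theta_L^{\mathrm{low}}$ into $\Theta_L^{\mathrm{high}}$ (well-definedness), and that it is injective; bijectivity then follows because the two sets are finite and equinumerous. To justify the last point, I would first record a cardinality count: since $N_L = 2N_{L-1}$ is even, every frequency $\theta_k = \frac{2k\pi}{N_L}$ lies in exactly one of the two disjoint sets $\Theta_L^{\mathrm{low}}$ and $\Theta_L^{\mathrm{high}}$, which together exhaust the $N_L$ elements of $\Theta_L$; counting the admissible indices $k$ with $-\frac{N_L}{4} < k \le \frac{N_L}{4}$ shows each set contains exactly $\frac{N_L}{2}$ frequencies. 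Hence an injection of one set into the other is automatically a bijection.

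For well-definedness I would split according to the sign of $\theta_k$, using that adding or subtracting $\pi = \frac{2(N_L/2)\pi}{N_L}$ corresponds to the index shift $k \mapsto k \mp \frac{N_L}{2}$, so that $\gamma(\theta_k)$ automatically remains one of the grid frequencies in $\Theta_L$. It then remains to check the image lands in the correct half-open interval: for $\theta_k \in (0,\frac{\pi}{2}]$ one has $\gamma(\theta_k) = \theta_k - \pi \in (-\pi, -\frac{\pi}{2}]$, and for $\theta_k \in (-\frac{\pi}{2},0)$ one has $\gamma(\theta_k) = \theta_k + \pi \in (\frac{\pi}{2},\pi)$; both lie in $\Theta_L^{\mathrm{high}}$.

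Injectivity I would obtain branch by branch: on each sign-branch $\gamma$ is a pure translation (by $-\pi$ or $+\pi$) and hence injective, while the two branches have disjoint images, since the positive low frequencies are sent into the negative half $(-\pi,-\frac{\pi}{2}]$ and the negative low frequencies into the positive half $(\frac{\pi}{2},\pi)$. No collision across branches is therefore possible, and together with injectivity on each branch this yields global injectivity, hence bijectivity by the cardinality argument. A clean way to package the whole argument is to observe that the same formula $\phi \mapsto \phi - \mathrm{sign}(\phi)\pi$ sends $\Theta_L^{\mathrm{high}}$ back into $\Theta_L^{\mathrm{low}}$ and squares to the identity, exhibiting $\gamma$ as an involution that swaps the two sets and making the one-to-one correspondence transparent.

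The delicate point, and the step I expect to require the most care, is the boundary behaviour at $\theta_k = 0$ together with its partner $\theta_k = \pi$. With the naive convention $\mathrm{sign}(0)=0$ the formula gives $\gamma(0)=0 \notin \Theta_L^{\mathrm{high}}$, so one must fix the sign convention (taking $\mathrm{sign}(0) = -1$) so that $\gamma(0) = \pi$, which is precisely the high frequency left unmatched by the interior frequencies; one then checks consistently that $\gamma(\pi)=0$ under the same formula. Verifying this endpoint pairing, and that the half-open interval endpoints are assigned to exactly the intended set, is the only part that is not a routine translation or counting argument.
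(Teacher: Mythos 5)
Your proof is correct, and it reaches the conclusion by a slightly different route than the paper. The paper establishes well-definedness by the index computation $\hat{k} = k - \mathrm{sign}(\theta_k)\frac{N_L}{2}$ and then observes that applying the same formula to $\gamma(\theta_k)$ (whose sign has flipped) returns $\theta_k$, so $\gamma$ has an explicit two-sided inverse and is therefore a bijection in one stroke. Your primary argument instead proves injectivity branch by branch (each branch is a translation, and the two branches have disjoint images in $(-\pi,-\tfrac{\pi}{2}]$ and $(\tfrac{\pi}{2},\pi]$ respectively) and upgrades this to bijectivity via the cardinality count $\lvert\Theta_L^{\mathrm{low}}\rvert = \lvert\Theta_L^{\mathrm{high}}\rvert = \tfrac{N_L}{2}$; the involution packaging you mention at the end is exactly the paper's argument. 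The counting route costs you one extra observation but avoids having to track the sign flip of $\gamma(\theta_k)$. More importantly, your discussion of the boundary case is a genuine improvement on the paper: the frequency $\theta_0 = 0$ does lie in $\Theta_L^{\mathrm{low}}$, and with the standard convention $\mathrm{sign}(0)=0$ the formula gives $\gamma(0)=0\notin\Theta_L^{\mathrm{high}}$ and the paper's claimed index set for $\hat{k}$ would be violated ($\hat{k}=0$ is excluded from it). The paper silently assumes a convention such as $\mathrm{sign}(0)=-1$, which sends $0\mapsto\pi$ and $\pi\mapsto 0$ consistently; you are right to make this explicit, and it is the only point in the lemma where any care is actually required.
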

\begin{proof}
  Let $\theta_k \in \Theta_L^{\mathrm{low}}$. By definition we have
  \[ \theta_k = \frac{2 k \pi}{N_L} \qquad \text{with} \quad k \in \left\{ 1-
    \frac{N_L}{4},\ldots,\frac{N_L}{4} \right\}. \]
  For the mapping $\gamma$ we then obtain
  \begin{align*}
    \gamma(\theta_k) =  \theta_k - \mathrm{sign}(\theta_k) \pi = \frac{2 k
      \pi}{N_L}  - \mathrm{sign}(\theta_k) \pi = \frac{2(k
      -\mathrm{sign}(\theta_k)\frac{N_L}{2} ) \pi}{N_L} = \frac{2 \hat{k}
      \pi}{N_L},
  \end{align*}
  with
  \begin{align*}
    \hat{k} = k - \mathrm{sign}(\theta_k)\frac{N_L}{2} \in \left\{ 1
      -\frac{N_L}{2},\ldots,-\frac{N_L}{4} \right\} \cup
    \left\{ \frac{N_L}{4} +1,\ldots, \frac{N_L}{2} \right\}.
  \end{align*}
  This implies that $\gamma(\theta_k) \in \Theta_L^{\mathrm{high}}$ and that
  $\mathrm{sign}(\gamma(\theta_k)) = -\mathrm{sign}(\theta_k)$. Hence we have
  \begin{align*}
    \gamma(\gamma(\theta_k)) = \gamma(\theta_k) -
    \mathrm{sign}(\gamma(\theta_k)) \pi = \gamma(\theta_k) +
    \mathrm{sign}(\theta_k) \pi = \theta_k,
  \end{align*}
 which completes the proof.
\end{proof}

\begin{lemma}\label{chap4_lemma10}
  The vector $\vec{u} = (\vec{u}_1, \vec{u}_2,\ldots,\vec{u}_{N_L})^\top \in
  \IR^{N_L\,N_t}$ for $N_{L-1}, N_t \in \IN$ and $N_L = 2 N_{L-1}$ can be written
  as
  \begin{align*}
    \vec{u} = \sum_{\theta_k \in \theta_L^{\mathrm{low}}} \left[
      \vec{\psi}^L(\theta_k) + \vec{\psi}^L(\gamma(\theta_k)) \right],
  \end{align*}
  where the vector $\vec{\psi}^L(\theta_k) \in \IC^{N_t N_L}$ is defined as in
  Lemma
  \ref{chap4_lemma1}.
\end{lemma}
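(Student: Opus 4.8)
The plan is to reindex the full Fourier expansion from Lemma \ref{chap4_lemma1} by exploiting the partition of the frequency set $\Theta_L$ into low and high frequencies, together with the bijection $\gamma$ established in Lemma \ref{chap4_lemma9}. Recall that Lemma \ref{chap4_lemma1} already provides
\[
  \vec{u} = \sum_{\theta_k \in \Theta_L} \vec{\psi}^L(\theta_k),
\]
where for each frequency the coefficient matrix $U = U(\theta_k)$ is determined by $\vec{u}$. The content of the present lemma is therefore purely combinatorial: it is a regrouping of the terms in this sum, not a new analytic identity about $\vec{u}$.

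First I would observe that, by definition, $\Theta_L^{\mathrm{low}}$ and $\Theta_L^{\mathrm{high}}$ form a disjoint partition of $\Theta_L$, since $\Theta_L^{\mathrm{high}} = \Theta_L \setminus \Theta_L^{\mathrm{low}}$. Hence the single sum over $\Theta_L$ splits as
\[
  \sum_{\theta_k \in \Theta_L} \vec{\psi}^L(\theta_k)
  = \sum_{\theta_k \in \Theta_L^{\mathrm{low}}} \vec{\psi}^L(\theta_k)
  + \sum_{\theta_j \in \Theta_L^{\mathrm{high}}} \vec{\psi}^L(\theta_j).
\]

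Next I would invoke Lemma \ref{chap4_lemma9}, which states that $\gamma : \Theta_L^{\mathrm{low}} \rightarrow \Theta_L^{\mathrm{high}}$ is a one to one mapping onto $\Theta_L^{\mathrm{high}}$. This licenses reindexing the second sum by setting $\theta_j = \gamma(\theta_k)$ with $\theta_k$ ranging over $\Theta_L^{\mathrm{low}}$, so that
\[
  \sum_{\theta_j \in \Theta_L^{\mathrm{high}}} \vec{\psi}^L(\theta_j)
  = \sum_{\theta_k \in \Theta_L^{\mathrm{low}}} \vec{\psi}^L(\gamma(\theta_k)).
\]
Combining the two displays yields exactly the claimed representation. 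Here it is essential that the coefficient matrix attached to $\vec{\psi}^L(\gamma(\theta_k))$ is the one belonging to the high frequency $\gamma(\theta_k)$; the reindexing is legitimate precisely because $\gamma$ hits each high frequency exactly once.

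The argument presents no substantial obstacle: the only points requiring care are that $\Theta_L^{\mathrm{low}}$ and $\Theta_L^{\mathrm{high}}$ genuinely partition $\Theta_L$, so that no frequency is double counted or omitted, and that $\gamma$ is both injective and surjective onto $\Theta_L^{\mathrm{high}}$. Both facts are already supplied, the partition directly from the definition of the two frequency sets and the bijectivity from Lemma \ref{chap4_lemma9}, so the proof reduces to the reindexing displayed above. A matching count of cardinalities, namely $\abs{\Theta_L^{\mathrm{low}}} = \abs{\Theta_L^{\mathrm{high}}} = N_{L-1}$ with $\abs{\Theta_L} = N_L = 2 N_{L-1}$, serves as a consistency check that the regrouping exhausts all $N_L$ Fourier modes.
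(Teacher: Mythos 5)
Your proposal is correct and follows exactly the paper's own argument: apply Lemma \ref{chap4_lemma1} to expand $\vec{u}$ over $\Theta_L$, split the sum using the partition $\Theta_L = \Theta_L^{\mathrm{low}} \cup \Theta_L^{\mathrm{high}}$, and reindex the high-frequency part via the bijection $\gamma$ from Lemma \ref{chap4_lemma9}. The extra cardinality check is a nice sanity verification but adds nothing beyond what the paper does.
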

\proof
{
  Applying Lemma \ref{chap4_lemma1} and Lemma \ref{chap4_lemma9} proves the
  statement of this lemma with
  \begin{align*}
     \vec{u} &= \sum_{\theta_k \in \Theta_L} \vec{\psi}^L(\theta_k) = \sum_{\theta_k
       \in \Theta_L^{\mathrm{low}}} \vec{\psi}^L(\theta_k) + \sum_{\theta_k \in
       \Theta_L^{\mathrm{high}}} \vec{\psi}^L(\theta_k)\\
      &= \sum_{\theta_k \in \Theta_L^{\mathrm{low}}} \vec{\psi}^L(\theta_k) +
      \sum_{\theta_k \in
       \Theta_L^{\mathrm{low}}} \vec{\psi}^L(\gamma(\theta_k)) = \sum_{\theta_k \in
       \Theta_L^{\mathrm{low}}} \left[ \vec{\psi}^L(\theta_k) +
       \vec{\psi}^L(\gamma(\theta_k)) \right].
  \end{align*}
}
Lemma \ref{chap4_lemma10} motivates
\begin{definition}[Space of harmonics]
   For $N_{L}, N_t \in \IN$ and for a low frequency $\theta_k\in
   \Theta_L^{\mathrm{low}}$ let the vector $\vec{\Phi}^L(\theta_k) \in \IC^{N_t
     N_L}$ be defined as in Lemma \ref{chap4_lemma1}. Then the linear space of
   harmonics with frequency $\theta_k$ is given by
  \begin{align*}
    \mathcal{E}_L(\theta_k) &:= \mathrm{span}\left\{ \vec{\Phi}^L(\theta_k),
      \vec{\Phi}^L(\gamma(\theta_k)) \right\} \\
    &\phantom{:}= \big\{ \vec{\psi}^L(\theta_k) \in \IC^{N_t N_L} :
    \vec{\psi}_n^L(\theta_k) =
      U_1 \vec{\Phi}_n^L(\theta_k) + U_2 \vec{\Phi}_n^L(\gamma(\theta_k)), \\
      &\qquad\qquad\qquad\qquad\qquad\qquad n = 1,\ldots,N_L \text{ and } U_1,
      U_2 \in
      \IC^{N_t \times N_t} \big\}.
\end{align*}
\end{definition}
Under the assumption of periodic boundary conditions, the mappings
(\ref{chap4_mappingProperties1}) imply the mapping properties
\begin{align}\label{chap4_mappingProperties2}
  \tilde{\mathcal{L}}_{\tau_L} : \mathcal{E}_L(\theta_k) \rightarrow
  \mathcal{E}_L(\theta_k)
\quad \text{and} \quad \tilde{\mathcal{S}}_{\tau_L}^\nu : \mathcal{E}_L(\theta_k)
\rightarrow \mathcal{E}_L(\theta_k),
\end{align}
with the mapping for the system matrix $\tilde{\mathcal{L}}_{\tau_L}$
\begin{align}
    &\begin{pmatrix} U_1\\U_2 \end{pmatrix} \mapsto \begin{pmatrix}
    \hat{\mathcal{L}}_{\tau_L}(\theta_k) & 0 \\ 0 &
    \hat{\mathcal{L}}_{\tau_L}(\gamma(\theta_k)) \end{pmatrix} \begin{pmatrix}
    U_1\\U_2 \end{pmatrix},\\
    \intertext{and the mapping for the smoother
      $\tilde{\mathcal{S}}_{\tau_L}^\nu$}
    &\begin{pmatrix} U_1\\U_2 \end{pmatrix} \mapsto \begin{pmatrix}
    \hat{\mathcal{S}}_{\tau_L}^\nu(\theta_k, \omega_t) & 0 \\ 0 &
    \hat{\mathcal{S}}_{\tau_L}^\nu(\gamma(\theta_k),
    \omega_t) \end{pmatrix} \begin{pmatrix} U_1\\U_2 \end{pmatrix}.
\end{align}
We now analyze the two-grid cycle on the space of harmonics
$\mathcal{E}_L(\theta_k)$ for frequencies $\theta_k \in
\Theta_L^{\mathrm{low}}$. To do so, we further have to investigate the mapping
properties of the restriction and prolongation operators $\mathcal{R}^L$ and
$\mathcal{P}^L$. The restriction operator is given by
\begin{align}
  \mathcal{R}^L &:= \begin{pmatrix}
    R_1 & R_2 &     &        &        & \\
        &     & R_1 & R_2    &        & \\
        &     &     & \ddots & \ddots & \\
        &     &     &        & R_1    & R_2
  \end{pmatrix} \in \IR^{N_t N_L \times N_t N_{L-1}},\label{chap4_restrictionTime}\\
  \intertext{and the prolongation operator is given by}
  \mathcal{P}^L &:= \begin{pmatrix}
    R_1^\top &          &        &         \\
    R_2^\top &          &        &         \\
            & R_1^\top  &        &         \\
            & R_2^\top  & \ddots &         \\
            &          & \ddots & R_1^\top \\
            &          &        & R_2^\top 
  \end{pmatrix} = (\mathcal{R}^L)^\top \in \IR^{N_t N_{L-1} \times N_t
    N_L},\label{chap4_prolongationTime}
\end{align}
with the local prolongation matrices
\begin{align*}
  R_1^\top := M_{\tau_L}^{-1} \widetilde{M}_{\tau_L}^1 \quad \text{and} \quad
  R_2^\top := M_{\tau_L}^{-1} \widetilde{M}_{\tau_L}^2,
\end{align*}
where for basis functions $\left\{ \psi_k \right\}_{k=1}^{N_t} \subset
\IP^{p_t}(0,\tau_L)$ and $\left\{  \widetilde{\psi}_k \right\}_{k=1}^{N_t}
\subset \IP^{p_t}(0, 2\tau_L)$ the local projection matrices from coarse to fine
grids are defined for $k,\ell = 1,\ldots,N_t$ by
\begin{align*}
  \widetilde{M}_{\tau_L}^1[k,\ell] := \int_{0}^{\tau_L}
  \widetilde{\psi}_\ell(t) \psi_k(t)
  \mathrm{d}t \quad \text{and} \quad \widetilde{M}_{\tau_L}^2[k,\ell] :=
  \int_{\tau_L}^{2 \tau_L} \widetilde{\psi}_\ell(t) \psi_k(t+\tau) \mathrm{d}t.
\end{align*}
To prove the mapping properties of the restriction operator
$\mathcal{R}^L$ we need
\begin{lemma}\label{chap4_lemma11}
  Let $\vec{\psi}^L(\theta_k) \in \Psi_L(\theta_k)$ for $\theta_k \in \Theta_L$.
  Then $\psi_{2n}^L(\theta_k) = \psi_{n}^L(2 \theta_k)$ holds
  for $n = 1,\ldots,N_{L-1}$.
\end{lemma}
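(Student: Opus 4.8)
The plan is to prove the identity by a direct componentwise computation, exactly in the spirit of Lemma \ref{chap4_lemma2}, since the claim is essentially the statement that doubling the time-step index $n$ has the same effect as doubling the frequency $\theta_k$ inside the exponential. There is no deep argument here; it is a single change of variable in the exponent, and the only care needed is bookkeeping of index ranges and keeping the coefficient matrix fixed across the asserted equality.

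First I would unfold the definition of a block Fourier mode $\vec{\psi}^L(\theta_k) \in \Psi_L(\theta_k)$ from Lemma \ref{chap4_lemma1}: for the fixed coefficient matrix $U \in \IC^{N_t \times N_t}$ underlying $\vec{\psi}^L$ and for $\ell = 1,\ldots,N_t$ we have
\[
  \vec{\psi}_m^L(\theta_k)[\ell] = \sum_{i=1}^{N_t} U[\ell,i]\, \vec{\Phi}_m^L(\theta_k)[i]
  = \sum_{i=1}^{N_t} U[\ell,i]\, \vec{\varphi}(\theta_k)[m]
  = \sum_{i=1}^{N_t} U[\ell,i]\, e^{\i m \theta_k}.
\]
The whole proof then rests on the elementary identity $e^{\i (2n)\theta_k} = e^{\i n (2\theta_k)}$, that is, $\vec{\varphi}(\theta_k)[2n] = \vec{\varphi}(2\theta_k)[n]$, equivalently $\vec{\Phi}_{2n}^L(\theta_k) = \vec{\Phi}_n^L(2\theta_k)$ as vectors in $\IC^{N_t}$.

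Second I would substitute the index $2n$ for $m$ and reread the resulting exponential as a mode at the doubled frequency, using the same (fixed) matrix $U$ on both sides:
\[
  \vec{\psi}_{2n}^L(\theta_k)[\ell]
  = \sum_{i=1}^{N_t} U[\ell,i]\, e^{\i (2n)\theta_k}
  = \sum_{i=1}^{N_t} U[\ell,i]\, e^{\i n (2\theta_k)}
  = \sum_{i=1}^{N_t} U[\ell,i]\, \vec{\Phi}_n^L(2\theta_k)[i]
  = \vec{\psi}_n^L(2\theta_k)[\ell].
\]
Since this holds for every $\ell = 1,\ldots,N_t$, I obtain the vector equality $\vec{\psi}_{2n}^L(\theta_k) = \vec{\psi}_n^L(2\theta_k)$, which is the assertion.

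Finally I would verify the bookkeeping on the index range so that both sides are well defined. Because $N_L = 2N_{L-1}$ and $n$ ranges over $1,\ldots,N_{L-1}$, the doubled index $2n$ ranges over $2,4,\ldots,N_L$ and thus remains a legitimate fine-grid block index, so the left-hand side is meaningful. I would also record, for use in the subsequent restriction and prolongation analysis, that if $\theta_k = \frac{2k\pi}{N_L} \in \Theta_L$ then $2\theta_k = \frac{2k\pi}{N_{L-1}}$ is exactly the corresponding coarse-grid frequency, so the right-hand side is the natural coarse mode obtained by aliasing. I expect no real obstacle: the argument is the one-line identity in the exponent above, and the substantive points are only that the same $U$ is shared by both modes and that the indices stay in range.
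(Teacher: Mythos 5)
Your proof is correct and follows essentially the same route as the paper: both reduce the claim to the elementary identity $e^{\i(2n)\theta_k}=e^{\i n(2\theta_k)}$, i.e.\ $\vec{\Phi}_{2n}^L(\theta_k)=\vec{\Phi}_n(2\theta_k)$, and then apply the fixed coefficient matrix $U$ to both sides. The only cosmetic difference is that you carry out the computation componentwise with the sum over $i$ written out, while the paper works directly at the level of the vectors $\vec{\Phi}$; the index-range check you add is a harmless extra.
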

\begin{proof}
   Let $\vec{\psi}^L(\theta_k) \in \Psi_L(\theta_k)$. Hence we have
   $\vec{\psi}_n^L(\theta_k) = U \vec{\Phi}_{n}^L(\theta_k)$ for $n =
   1,\dots,N_L$. Then for $\vec{\Phi}_{2n}^L(\theta_k)$ with $n \in
   \{1,\ldots,N_{L-1} \}$ we obtain for $\ell = 1,\ldots,N_t$ that
   \begin{align*}
     \vec{\Phi}_{2n}^L(\theta_k)[\ell] = \vec{\varphi}_{2n}(\theta_k) = e^{\i
       2n \theta_k} = \vec{\varphi}_{n}(2 \theta_k) =
     \vec{\Phi}_{n}^L(2\theta_k)[\ell].
   \end{align*}
   Hence the statement of this lemma follows from
   \[ \vec{\psi}_{2n}^L(\theta_k) = U \vec{\Phi}_{2n}^L(\theta_k) = U
   \vec{\Phi}_{n}^L(2\theta_k) = \vec{\psi}_{n}^L(2\theta_k). \]
\end{proof}

The next two lemmas give the mapping properties of the restriction and
extension:
\begin{lemma}\label{chap4_lemma12}
  Let $\theta_k \in \Theta_L^{\mathrm{low}}$. Then the restriction
  operator $\mathcal{R}^L$ has the mapping property
  \begin{align*}
    \mathcal{R}^L : \mathcal{E}_L(\theta_k) \rightarrow \Psi_{L-1}(2 \theta_k),
  \end{align*}
  with the mapping
  \begin{align*}
    \begin{pmatrix} U_1\\U_2 \end{pmatrix} \mapsto \begin{pmatrix}
    \hat{\mathcal{R}}(\theta_k) &
    \hat{\mathcal{R}}(\gamma(\theta_k)) \end{pmatrix} \begin{pmatrix}
    U_1\\U_2 \end{pmatrix} \in \IC^{N_t \times N_t}
  \end{align*}
  and the Fourier symbol 
  \begin{align*}
    \hat{\mathcal{R}}(\theta_k) := e^{-\i \theta_k} R_1 + R_2.
  \end{align*}
\end{lemma}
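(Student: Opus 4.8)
The plan is to evaluate the $m$-th block of $\mathcal{R}^L \vec{\psi}^L(\theta_k)$ for an arbitrary harmonic $\vec{\psi}^L(\theta_k) \in \mathcal{E}_L(\theta_k)$ and to read off the coefficient matrix of the result. From the block structure (\ref{chap4_restrictionTime}) of $\mathcal{R}^L$, its $m$-th block for $m = 1,\ldots,N_{L-1}$ is $(\mathcal{R}^L \vec{\psi}^L)_m = R_1 \vec{\psi}_{2m-1}^L(\theta_k) + R_2 \vec{\psi}_{2m}^L(\theta_k)$, so the task reduces to expressing the odd- and even-indexed fine-grid blocks through the single coarse Fourier mode $\vec{\Phi}_m^{L-1}(2\theta_k)$.

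First I would insert the definition of the space of harmonics, $\vec{\psi}_n^L(\theta_k) = U_1 \vec{\Phi}_n^L(\theta_k) + U_2 \vec{\Phi}_n^L(\gamma(\theta_k))$. For the even index, the identity $\vec{\Phi}_{2m}^L(\theta) = \vec{\Phi}_m^{L-1}(2\theta)$ underlying Lemma \ref{chap4_lemma11} applies, and since $2\gamma(\theta_k)$ and $2\theta_k$ differ by an integer multiple of $2\pi$ we also have $\vec{\Phi}_m^{L-1}(2\gamma(\theta_k)) = \vec{\Phi}_m^{L-1}(2\theta_k)$; hence $\vec{\psi}_{2m}^L(\theta_k) = (U_1 + U_2)\vec{\Phi}_m^{L-1}(2\theta_k)$. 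For the odd index, the elementary shift $\vec{\Phi}_{2m-1}^L(\theta_k)[\ell] = e^{\i(2m-1)\theta_k} = e^{-\i\theta_k}e^{\i m(2\theta_k)}$ gives $\vec{\Phi}_{2m-1}^L(\theta_k) = e^{-\i\theta_k}\vec{\Phi}_m^{L-1}(2\theta_k)$, and together with the same aliasing applied to $\gamma(\theta_k)$ this yields $\vec{\psi}_{2m-1}^L(\theta_k) = (e^{-\i\theta_k}U_1 + e^{-\i\gamma(\theta_k)}U_2)\vec{\Phi}_m^{L-1}(2\theta_k)$.

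Substituting both expressions into the block formula and collecting the coefficients of $U_1$ and $U_2$ produces
\begin{align*}
 (\mathcal{R}^L \vec{\psi}^L)_m = \left[ (e^{-\i\theta_k}R_1 + R_2)U_1 + (e^{-\i\gamma(\theta_k)}R_1 + R_2)U_2 \right] \vec{\Phi}_m^{L-1}(2\theta_k),
\end{align*}
which is exactly $\left[\hat{\mathcal{R}}(\theta_k)U_1 + \hat{\mathcal{R}}(\gamma(\theta_k))U_2\right]\vec{\Phi}_m^{L-1}(2\theta_k)$ with $\hat{\mathcal{R}}(\theta_k) = e^{-\i\theta_k}R_1 + R_2$. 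Because this holds for every $m$ with one and the same coefficient matrix, the restricted vector lies in $\Psi_{L-1}(2\theta_k)$ and its coefficient matrix is $\begin{pmatrix}\hat{\mathcal{R}}(\theta_k) & \hat{\mathcal{R}}(\gamma(\theta_k))\end{pmatrix}(U_1,U_2)^\top$, as claimed.

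I expect the only genuine subtlety to be the aliasing identity $\vec{\Phi}_m^{L-1}(2\gamma(\theta_k)) = \vec{\Phi}_m^{L-1}(2\theta_k)$: on the fine grid the two modes $\theta_k$ and $\gamma(\theta_k) = \theta_k - \mathrm{sign}(\theta_k)\pi$ are distinct, but doubling the frequency collapses them onto the same coarse-grid mode, which is precisely the mechanism by which restriction couples the two harmonics spanning $\mathcal{E}_L(\theta_k)$ into a single coefficient matrix. The remaining work is routine bookkeeping with the index shift and Lemma \ref{chap4_lemma11}.
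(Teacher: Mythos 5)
Your proposal is correct and follows essentially the same route as the paper's proof: the block formula $(\mathcal{R}^L\vec{\psi}^L)_n = R_1\vec{\psi}_{2n-1}^L + R_2\vec{\psi}_{2n}^L$, the shift identity of Lemma \ref{chap4_lemma2}, the frequency-doubling identity of Lemma \ref{chap4_lemma11}, and the aliasing $\vec{\Phi}_n^{L-1}(2\gamma(\theta_k)) = \vec{\Phi}_n^{L-1}(2\theta_k)$, which the paper also verifies explicitly. The only cosmetic difference is that the paper first derives the symbol for a single generic mode $\vec{\Phi}^L(\theta_\ell)$ and then specializes to the two harmonics, whereas you expand the harmonic decomposition directly and group by even and odd indices; the content is identical.
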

\begin{proof}
  Let $\vec{\psi}^L(\theta_k) \in \mathcal{E}_L(\theta_k)$ for some frequency
  $\theta_k \in \Theta_L^{\mathrm{low}}$ with the linear combination
  $\vec{\psi}_n^L(\theta_k) = U_1 \vec{\Phi}_n^L(\theta_k) + U_2
  \vec{\Phi}_n^L(\gamma(\theta_k))$. Then for the Fourier mode
  $\vec{\Phi}^L(\theta_\ell)$ with frequency $\theta_\ell \in \Theta_L$ we
  have for a fixed $n \in \{ 1, \ldots,N_{L-1}  \}$
  \begin{align*}
    \left( \mathcal{R}^L \vec{\Phi}^L(\theta_\ell)\right)_n &= R_1
    \vec{\Phi}_{2n-1}^L(\theta_\ell) + R_2 \vec{\Phi}_{2n}^L(\theta_\ell)
    = \left[ e^{-\i \theta_\ell} R_1 + R_2 \right]
    \vec{\Phi}_{2n}^L(\theta_\ell)\\
    &= \left[ e^{-\i \theta_\ell} R_1 + R_2 \right]
    \vec{\Phi}_{n}^{L-1}(2\theta_\ell),
  \end{align*}
  since $\vec{\Phi}^L(\theta_\ell) \in \Psi_L(\theta_\ell)$ using
  Lemma \ref{chap4_lemma2}, and also applying Lemma
  \ref{chap4_lemma11}. Using this result for the vector
  $\vec{\psi}^L(\theta_k)$ leads to
  \begin{align*}
    \left( \mathcal{R}^L \vec{\psi}^L(\theta_k) \right)_n =
    \hat{\mathcal{R}}(\theta_k) U_1 \vec{\Phi}_{n}^{L-1}(2\theta_k) +
    \hat{\mathcal{R}}(\gamma(\theta_k))
    U_2\vec{\Phi}_{n}^{L-1}(2\gamma(\theta_k)).
  \end{align*}
  For $i = 1, \ldots, N_t$ we further have that
  \begin{align*}
    \vec{\Phi}_n^{L-1}(2\gamma(\theta_k))[i] &=
    \vec{\varphi}_n(2\gamma(\theta_k)) =
    e^{\i n 2 \gamma(\theta_k)} = e^{\i n 2 \theta_k - \i
      \,\mathrm{sign}(\theta_k) 2 \pi} \\
    &=  e^{\i n 2 \theta_k} = \vec{\varphi}_n(2\theta_k) = \vec{\Phi}_n^{L-1}(2
    \theta_k)[i].
  \end{align*}
  Hence we obtain
  \begin{align*}
    \left( \mathcal{R}^L \vec{\psi}^L(\theta_k) \right)_n = \left[
      \hat{\mathcal{R}}(\theta_k) U_1 + \hat{\mathcal{R}}(\gamma(\theta_k)) U_2
    \right] \vec{\Phi}_{n}^{L-1}(2\theta_k),
  \end{align*}
  which completes the proof.
\end{proof}
\begin{lemma}\label{chap4_lemma13}
  Let $\theta_k \in \Theta_L^{\mathrm{low}}$. Then the the
  prolongation operator $\mathcal{P}^L$ has the mapping property 
  \begin{align*}
    \mathcal{P}^L : \Psi_{L-1}(2 \theta_k) \rightarrow \mathcal{E}_L(\theta_k),
  \end{align*}
  with the mapping
  \begin{align*}
    U \mapsto \begin{pmatrix}
    \hat{\mathcal{P}}(\theta_k) \\
    \hat{\mathcal{P}}(\gamma(\theta_k)) \end{pmatrix} U \in \IC^{2 N_t \times N_t}
  \end{align*}
  and the Fourier symbol 
  \begin{align*}
    \hat{\mathcal{P}}(\theta_k) := \frac{1}{2}\left[e^{\i \theta_k} R_1^\top +
      R_2^\top\right].
  \end{align*}
\end{lemma}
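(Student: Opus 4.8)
The plan is to mirror the argument used for the restriction operator in Lemma \ref{chap4_lemma12}, but now acting on a single coarse-grid Fourier mode and producing a harmonic pair on the fine grid. I would start from an arbitrary $\vec{\psi}^{L-1}(2\theta_k) \in \Psi_{L-1}(2\theta_k)$, written via Lemma \ref{chap4_lemma1} as $\vec{\psi}_m^{L-1}(2\theta_k) = U \vec{\Phi}_m^{L-1}(2\theta_k)$ for $m = 1,\ldots,N_{L-1}$. From the block structure of $\mathcal{P}^L$ in (\ref{chap4_prolongationTime}), the prolongation sends the $m$-th coarse block to the two fine blocks with indices $2m-1$ and $2m$, giving $(\mathcal{P}^L \vec{\psi}^{L-1}(2\theta_k))_{2m-1} = R_1^\top U \vec{\Phi}_m^{L-1}(2\theta_k)$ and $(\mathcal{P}^L \vec{\psi}^{L-1}(2\theta_k))_{2m} = R_2^\top U \vec{\Phi}_m^{L-1}(2\theta_k)$.

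Next I would re-express the coarse Fourier mode $\vec{\Phi}_m^{L-1}(2\theta_k)$ in terms of the fine modes $\vec{\Phi}_n^L(\theta_k)$ and their harmonic partners $\vec{\Phi}_n^L(\gamma(\theta_k))$, so the output can be recognized as an element of $\mathcal{E}_L(\theta_k)$. The two facts needed are the index-doubling relation from Lemma \ref{chap4_lemma11}, which gives $\vec{\Phi}_{2m}^L(\theta_k) = \vec{\Phi}_m^{L-1}(2\theta_k)$ and hence $\vec{\Phi}_{2m-1}^L(\theta_k) = e^{-\i\theta_k}\vec{\Phi}_m^{L-1}(2\theta_k)$, and the parity relation $\vec{\Phi}_n^L(\gamma(\theta_k)) = (-1)^n \vec{\Phi}_n^L(\theta_k)$, which follows from $\gamma(\theta_k) = \theta_k - \mathrm{sign}(\theta_k)\pi$ together with $e^{\mp\i\pi} = -1$. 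Thus an even fine block $n=2m$ satisfies $\vec{\Phi}_{2m}^L(\theta_k) = \vec{\Phi}_{2m}^L(\gamma(\theta_k))$, while an odd fine block $n=2m-1$ satisfies $\vec{\Phi}_{2m-1}^L(\gamma(\theta_k)) = -\vec{\Phi}_{2m-1}^L(\theta_k)$.

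Matching the target form $U_1 \vec{\Phi}_n^L(\theta_k) + U_2 \vec{\Phi}_n^L(\gamma(\theta_k))$ separately on the even and odd blocks then yields the two matrix equations $U_1 + U_2 = R_2^\top U$ and $(U_1 - U_2)e^{-\i\theta_k} = R_1^\top U$. Solving this linear system gives $U_1 = \tfrac{1}{2}(e^{\i\theta_k}R_1^\top + R_2^\top) U = \hat{\mathcal{P}}(\theta_k) U$ and, using the identity $e^{\i\gamma(\theta_k)} = -e^{\i\theta_k}$, $U_2 = \tfrac{1}{2}(R_2^\top - e^{\i\theta_k}R_1^\top)U = \hat{\mathcal{P}}(\gamma(\theta_k)) U$, which is exactly the claimed mapping and symbol.

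I expect the main obstacle to be the bookkeeping of the even/odd fine-grid parity and its interaction with the harmonic frequency $\gamma(\theta_k)$: the essential point is that shifting $\theta_k$ by $\pm\pi$ is invisible on even fine indices but flips the sign on odd ones, and it is precisely this sign pattern that decouples the contributions into the two symbols $\hat{\mathcal{P}}(\theta_k)$ and $\hat{\mathcal{P}}(\gamma(\theta_k))$. Care is also needed to apply the identity $e^{\i\gamma(\theta_k)} = -e^{\i\theta_k}$ consistently, as this is what collapses the $U_2$ block into $\hat{\mathcal{P}}(\gamma(\theta_k))$ rather than a separate expression.
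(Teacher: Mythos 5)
Your proposal is correct and follows essentially the same route as the paper's proof: apply the block structure of $\mathcal{P}^L$ to a coarse mode, use Lemma \ref{chap4_lemma11} (together with the shift relation of Lemma \ref{chap4_lemma2}) to relate $\vec{\Phi}_m^{L-1}(2\theta_k)$ to the fine modes at odd and even indices, exploit the parity sign $\vec{\Phi}_n^L(\gamma(\theta_k)) = (-1)^n\vec{\Phi}_n^L(\theta_k)$, and solve the resulting $2\times 2$ system for $U_1,U_2$, identifying $U_2$ via $e^{\i\gamma(\theta_k)}=-e^{\i\theta_k}$. The only cosmetic difference is that the paper first rewrites everything in terms of an auxiliary fine-grid vector $\vec{\psi}^L(\theta_k)$ before matching coefficients, whereas you match against the Fourier modes directly; the content is identical.
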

\begin{proof}
  For $\theta_k \in \Theta_{L}^{\mathrm{low}}$ let $\vec{\psi}^{L-1}(2\theta_k) \in
  \Psi_{L-1}(2\theta_k)$ with $\vec{\psi}_{\hat{n}}^{L-1}(2\theta_k) = U
  \vec{\Phi}_{\hat{n}}^{L-1}(2\theta_k)$ for $\hat{n} \in \left\{ 1,\ldots,N_{L-1}
  \right\}$. We then define $\vec{\psi}^{L}(\theta_k) \in
  \Psi_{L}(\theta_k)$ as $\vec{\psi}_{n}^{L}(\theta_k) = U
  \vec{\Phi}_{n}^{L}(\theta_k)$ for $n \in \left\{
    1,\ldots,N_{L}\right\}$ and obtain
  \begin{align*}
    \left( \mathcal{P}^L \vec{\psi}^{L-1}(2\theta_k) \right)_{2 \hat{n}-1} =
    R_1^\top
    \vec{\psi}_{\hat{n}}^{L-1}(2\theta_k)
    = R_1^\top \vec{\psi}_{2\hat{n}}^{L}(\theta_k)
    = e^{\i \theta_k} R_1^\top \vec{\psi}_{2\hat{n}-1}^{L}(\theta_k),
  \end{align*}
  where we used Lemma \ref{chap4_lemma11} and Lemma
  \ref{chap4_lemma2}.  Similar computations also give
  \begin{align*}
    \left( \mathcal{P}^L \vec{\psi}^{L-1}(2\theta_k) \right)_{2 \hat{n}} &= R_2^\top
    \vec{\psi}_{\hat{n}}^{L-1}(2\theta_k) =
    R_2^\top\vec{\psi}_{2\hat{n}}^{L}(\theta_k).
  \end{align*}
  Hence we have for $n \in \left\{ 1,\ldots,N_L \right\}$
  \begin{align*}
    \left( \mathcal{P}^L \vec{\psi}^{L-1}(2\theta_k) \right)_n = \begin{cases}
      e^{\i
        \theta_k} R_1^\top \vec{\psi}_n^L(\theta_k) & n \text{ odd},\\ R_2^\top
      \vec{\psi}_n^L(\theta_k) & n \text{ even} \end{cases} \in \IC^{N_t}.
  \end{align*}
  For the image of the prolongation operator $\mathcal{P}^L$ to be
  contained in $\mathcal{E}_L(\theta_k)$, the
  following equations have to be satisfied for $n = 1,\ldots,N_L$:
  \begin{equation}\label{chap4_proofProlongationEqu1}
  \begin{aligned}
    U_1 \vec{\Phi}_n^L(\theta_k) + U_2 \vec{\Phi}_n^L(\gamma(\theta_k)) &= e^{\i
        \theta_k} R_1^\top U \vec{\Phi}_n^L(\theta_k) &\quad &\text{for } n
      \text{ odd},\\
    U_1 \vec{\Phi}_n^L(\theta_k) + U_2 \vec{\Phi}_n^L(\gamma(\theta_k)) &=
    R_2^\top U \vec{\Phi}_n^L(\theta_k) &\quad &\text{for } n
      \text{ even}.
  \end{aligned}
  \end{equation}
  Further computations show for $\ell = 1,\ldots,N_t$ that
  \begin{align*}
    \vec{\Phi}_n^L(\gamma(\theta_k))[\ell] &= \vec{\varphi}_n(\gamma(\theta_k))
    = e^{\i n \gamma(\theta_k)} = e^{\i n \theta_k - \i \,
      \mathrm{sign}(\theta_k) n \pi} = \vec{\varphi}_n(\theta_k) e^{\i
      \,\mathrm{sign}(\theta_k) n \pi}\\
    &= \vec{\Phi}_n^L(\theta_k)[\ell] \begin{cases} 1 & n \text{ even}, \\ -1
      & n \text{ odd}. \end{cases}
  \end{align*}
  Hence the equations (\ref{chap4_proofProlongationEqu1}) are equivalent to the
  system of linear equations
  \begin{align*}
    U_1 - U_2 &= e^{\i \theta_k} R_1^\top U,\\
    U_1 + U_2 &= R_2^\top U.
  \end{align*}
  Solving for $U_1$ and $U_2$ results in
  \begin{align*}
    U_1 &= \frac{1}{2} \left[ e^{\i \theta_k} R_1^\top + R_2^\top \right] =
    \hat{\mathcal{P}}(\theta_k) U,\\
    U_2 &= \frac{1}{2} \left[ -e^{\i \theta_k} R_1^\top + R_2^\top \right] =
    \frac{1}{2} \left[ e^{\i (\theta_k - \mathrm{sign}(\theta_k) \pi)} R_1^\top
      + R_2^\top \right]\\
    &= \frac{1}{2} \left[ e^{\i \gamma(\theta_k)} R_1^\top + R_2^\top \right] =
    \hat{\mathcal{P}}(\gamma(\theta_k)) U,
  \end{align*}
  which completes the proof.
\end{proof}

In view of Lemma \ref{chap4_lemma12} and Lemma \ref{chap4_lemma13},
the stencil notations for the restriction and prolongation operators
$\mathcal{R}^L$ and $\mathcal{P}^L$ are given by
\begin{align*}
  \widetilde{\mathcal{R}}^L :=  \begin{bmatrix} R_1 & R_2 &
    0  \end{bmatrix} \quad \text{and} \quad \widetilde{\mathcal{P}}^L :=
  \frac{1}{2} \begin{bmatrix} 0 & R_2^\top &  R_1^\top  \end{bmatrix}.
\end{align*}
For the two-grid operator $\mathcal{M}_{\tau_L}$ it now remains to
prove the mapping property of the coarse grid operator
$\tilde{\mathcal{L}}_{2\tau_L}^{-1}$. Assuming periodic
boundary conditions, we have for $\theta_k \in \Theta_{L-1}$ by using
(\ref{chap4_mappingProperties1}) that
\begin{align*}
 \tilde{\mathcal{L}}_{2\tau_L}^{-1} : \Psi_{L-1}(\theta_k) \rightarrow
 \Psi_{L-1}(\theta_k),
\end{align*}
with the Fourier symbol
\begin{align*}
  \hat{\mathcal{L}}_{2\tau_L}^{-1}(\theta_k) = \left(K_{\tau_L} + M_{\tau_L} -
e^{-\i \theta_k} N_{\tau_L} \right)^{-1} =
(\hat{\mathcal{L}}_{2\tau_L}(\theta_k))^{-1} \in \IC^{N_t \times N_t}.
\end{align*}
\begin{lemma}\label{chap4_lemma14}
  The frequency mapping 
  \[ \beta: \Theta_L^{\mathrm{low}} \rightarrow \Theta_{L-1}\qquad \text{with}
  \qquad \theta_k \mapsto 2 \theta_k \]
  is a one to one mapping.
\end{lemma}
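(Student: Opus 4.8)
The plan is to verify the claim by the same index-level computation used in the proof of Lemma~\ref{chap4_lemma9}, followed by an elementary cardinality count. First I would parametrize a low frequency $\theta_k \in \Theta_L^{\mathrm{low}}$ by its index: by definition $\theta_k = \frac{2k\pi}{N_L}$, and the constraint $\theta_k \in (-\frac{\pi}{2},\frac{\pi}{2}]$ forces $k \in \{1-\frac{N_L}{4},\ldots,\frac{N_L}{4}\}$, exactly as recorded at the start of the proof of Lemma~\ref{chap4_lemma9}. Using the refinement relation $N_L = 2N_{L-1}$ I would then compute $\beta(\theta_k) = 2\theta_k = \frac{4k\pi}{N_L} = \frac{2k\pi}{N_{L-1}}$.

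Next I would check that $\beta$ is well defined as a map into $\Theta_{L-1}$. Since $\frac{N_L}{4} = \frac{N_{L-1}}{2}$, the admissible indices become $k \in \{1-\frac{N_{L-1}}{2},\ldots,\frac{N_{L-1}}{2}\}$, which is precisely the index set defining $\Theta_{L-1}$; hence $\beta(\theta_k) = \frac{2k\pi}{N_{L-1}} \in \Theta_{L-1}$. It is important to note here that no reduction modulo $2\pi$ occurs: for $\theta_k \in (-\frac{\pi}{2},\frac{\pi}{2}]$ we have $2\theta_k \in (-\pi,\pi]$, so the doubled frequency already lies in the fundamental domain $(-\pi,\pi]$ used to define $\Theta_{L-1}$, and $\beta$ therefore acts simply as multiplication by two on the real line.

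Injectivity is then immediate: if $\beta(\theta_k) = \beta(\theta_j)$ then $2\theta_k = 2\theta_j$ and hence $\theta_k = \theta_j$. To upgrade this to a one to one correspondence I would finally compare cardinalities: the index set $\{1-\frac{N_L}{4},\ldots,\frac{N_L}{4}\}$ has $\frac{N_L}{2} = N_{L-1}$ elements, and the index set $\{1-\frac{N_{L-1}}{2},\ldots,\frac{N_{L-1}}{2}\}$ defining $\Theta_{L-1}$ likewise has $N_{L-1}$ elements, so the injective map $\beta$ between two finite sets of equal size is automatically a bijection.

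I do not expect any genuine difficulty in this argument; the only point requiring care is the index bookkeeping, namely the identity $\frac{N_L}{4} = \frac{N_{L-1}}{2}$ and the verification that doubling a low frequency never leaves $(-\pi,\pi]$, so that $\beta$ coincides with honest multiplication by two rather than multiplication followed by a modular shift, in contrast to the folding map $\gamma$ of Lemma~\ref{chap4_lemma9}. Implicitly one also uses that $N_L$ is divisible by four, which is guaranteed by the nested refinement $N_L = 2N_{L-1}$ together with $N_{L-1}$ even, as is standard for this coarsening in time.
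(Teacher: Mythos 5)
Your proposal is correct and follows essentially the same route as the paper's own proof: rewrite $\beta(\theta_k)=2\theta_k=\frac{2k\pi}{N_{L-1}}$ and observe that the index set $\left\{1-\frac{N_L}{4},\ldots,\frac{N_L}{4}\right\}$ parametrizing $\Theta_L^{\mathrm{low}}$ coincides with the index set $\left\{1-\frac{N_{L-1}}{2},\ldots,\frac{N_{L-1}}{2}\right\}$ defining $\Theta_{L-1}$. Your additional remarks on the absence of modular reduction and the explicit cardinality count merely spell out what the paper's identification of index sets leaves implicit.
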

\begin{proof}
  For $\theta_k \in \Theta_L^{\mathrm{low}}$ we obtain
  \begin{align*}
    \beta(\theta_k) = 2 \theta_k = 2 \frac{2 k \pi}{N_L} = \frac{2 k
      \pi}{\frac{N_L}{2}} = \frac{2 k \pi}{N_{L-1}} \in \Theta_{L-1}.
  \end{align*}
  The proof of this lemma then follows from the identity
  \begin{align*}
    k \in \left\{ 1 - \frac{N_L}{4},\ldots,\frac{N_L}{4} \right\} = \left\{ 1 -
      \frac{N_{L-1}}{2},\ldots,\frac{N_{L-1}}{2} \right\}.
  \end{align*}
\end{proof}

With Lemma \ref{chap4_lemma14} we now have for $\theta_k \in
\Theta_L^{\mathrm{low}}$ the coarse grid operator mapping property
\begin{align}\label{chap4_mappingProperties3}
  \tilde{\mathcal{L}}_{2\tau_L}^{-1} : \Psi_{L-1}(2 \theta_k) \rightarrow
 \Psi_{L-1}(2\theta_k).
\end{align}
We are now able to prove the following theorem for the two-grid operator
$\mathcal{M}_{\tau_L}$.
\begin{theorem}\label{chap4_theorem4}
   Let $\theta_k \in \Theta_L^{\mathrm{low}}$. With time periodic
   boundary conditions, the two-grid operator $\mathcal{M}_{\tau_L}$
   has the mapping property
  \begin{align*}
    \mathcal{M}_{\tau_L} : \mathcal{E}_L(\theta_k) \rightarrow
    \mathcal{E}_L(\theta_k),
  \end{align*}
  with the mapping
  \begin{align*}
    \begin{pmatrix} U_1\\U_2 \end{pmatrix} \mapsto 
    \hat{\mathcal{M}}(\theta_k) \begin{pmatrix}
    U_1\\U_2 \end{pmatrix}
  \end{align*}
  and the iteration matrix
  \begin{align*}
    \hat{\mathcal{M}}(\theta_k) := \begin{pmatrix}
    \hat{\mathcal{S}}_{\tau_L}^{\nu_2}(\theta_k, \omega_t) & 0 \\ 0 &
    \hat{\mathcal{S}}_{\tau_L}^{\nu_2}(\gamma(\theta_k),
    \omega_t) \end{pmatrix}
 \mathcal{K}(\theta_k)
  \begin{pmatrix}
    \hat{\mathcal{S}}_{\tau_L}^{\nu_1}(\theta_k, \omega_t) & 0 \\ 0 &
    \hat{\mathcal{S}}_{\tau_L}^{\nu_1}(\gamma(\theta_k),
    \omega_t) \end{pmatrix}
  \end{align*}
  with
  \begin{align*}
    \mathcal{K}(\theta_k) := I_{2 N_t} - 
    \begin{pmatrix}
    \hat{\mathcal{P}}(\theta_k) \\
    \hat{\mathcal{P}}(\gamma(\theta_k)) \end{pmatrix}
    (\hat{\mathcal{L}}_{2\tau_L}(2\theta_k))^{-1}
    \begin{pmatrix}\hat{\mathcal{R}}(\theta_k)^\top \\
    \hat{\mathcal{R}}(\gamma(\theta_k))^\top \end{pmatrix}^\top
    \begin{pmatrix}
    \hat{\mathcal{L}}_{\tau_L}(\theta_k) & 0 \\ 0 &
    \hat{\mathcal{L}}_{\tau_L}(\gamma(\theta_k)) \end{pmatrix}.
 \end{align*}
\end{theorem}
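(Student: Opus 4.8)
The plan is to read off the symbol of $\mathcal{M}_{\tau_L}$ by composing the local mapping properties of its five constituent operators, each of which has already been determined in the preceding lemmas. The essential structural fact, valid under the periodic boundary conditions assumed here, is that the space of harmonics $\mathcal{E}_L(\theta_k)$ is invariant under the full two-grid operator: restriction collapses the two-dimensional harmonic space onto the single coarse space $\Psi_{L-1}(2\theta_k)$, the coarse solve acts there, and prolongation lifts back into $\mathcal{E}_L(\theta_k)$. This is exactly why $\mathcal{E}_L(\theta_k)$, rather than the single-frequency space $\Psi_L(\theta_k)$, is the correct invariant object.

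Concretely, I would track a generic element of $\mathcal{E}_L(\theta_k)$ through the chain, represented by its coefficient pair $(U_1,U_2)$ relative to the basis $\{\vec{\Phi}^L(\theta_k),\vec{\Phi}^L(\gamma(\theta_k))\}$. First, by \eqref{chap4_mappingProperties2}, the pre-smoother $\tilde{\mathcal{S}}_{\tau_L}^{\nu_1}$ acts block-diagonally, sending $(U_1,U_2)$ to $(\hat{\mathcal{S}}_{\tau_L}^{\nu_1}(\theta_k,\omega_t)U_1,\hat{\mathcal{S}}_{\tau_L}^{\nu_1}(\gamma(\theta_k),\omega_t)U_2)$, so it supplies the rightmost block-diagonal factor of $\hat{\mathcal{M}}(\theta_k)$. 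It then remains to identify the symbol of the coarse-grid correction $I-\mathcal{P}^L\tilde{\mathcal{L}}_{2\tau_L}^{-1}\mathcal{R}^L\tilde{\mathcal{L}}_{\tau_L}$, which I claim equals precisely $\mathcal{K}(\theta_k)$.

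For this middle factor I would compose in order: $\tilde{\mathcal{L}}_{\tau_L}$ acts block-diagonally on $\mathcal{E}_L(\theta_k)$ by \eqref{chap4_mappingProperties2}, giving $\mathrm{diag}(\hat{\mathcal{L}}_{\tau_L}(\theta_k),\hat{\mathcal{L}}_{\tau_L}(\gamma(\theta_k)))$; Lemma \ref{chap4_lemma12} sends the result into $\Psi_{L-1}(2\theta_k)$ via the row symbol $(\hat{\mathcal{R}}(\theta_k),\hat{\mathcal{R}}(\gamma(\theta_k)))$; the coarse-grid mapping property \eqref{chap4_mappingProperties3}, which uses Lemma \ref{chap4_lemma14} to identify the target frequency as $2\theta_k$, applies $(\hat{\mathcal{L}}_{2\tau_L}(2\theta_k))^{-1}$ on $\Psi_{L-1}(2\theta_k)$; and Lemma \ref{chap4_lemma13} lifts back into $\mathcal{E}_L(\theta_k)$ via the column symbol $(\hat{\mathcal{P}}(\theta_k),\hat{\mathcal{P}}(\gamma(\theta_k)))^\top$. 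Multiplying these four symbols and subtracting from $I_{2N_t}$ yields exactly $\mathcal{K}(\theta_k)$. Appending the post-smoother $\tilde{\mathcal{S}}_{\tau_L}^{\nu_2}$, again block-diagonal by \eqref{chap4_mappingProperties2}, supplies the leftmost factor, and the composition is the stated $\hat{\mathcal{M}}(\theta_k)$; in particular $\mathcal{M}_{\tau_L}$ maps $\mathcal{E}_L(\theta_k)$ into itself.

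The remaining work is routine block-matrix multiplication, so the only genuine care needed is the bookkeeping of frequency arguments through the coarsening map: one must use that both $\theta_k$ and its harmonic partner $\gamma(\theta_k)$ are sent by restriction into the common coarse frequency $2\theta_k$, and that prolongation out of $\Psi_{L-1}(2\theta_k)$ returns into the same two-dimensional space $\mathcal{E}_L(\theta_k)$ rather than leaking into other harmonic pairs. These aliasing identities are precisely what Lemmas \ref{chap4_lemma9}, \ref{chap4_lemma12}, \ref{chap4_lemma13} and \ref{chap4_lemma14} secure. I therefore expect the main obstacle to be conceptual rather than computational, namely confirming that the restriction/prolongation pair respects the harmonic decomposition; once this is in hand, both the invariance of $\mathcal{E}_L(\theta_k)$ and the explicit form of $\hat{\mathcal{M}}(\theta_k)$ follow by direct assembly of the symbols above.
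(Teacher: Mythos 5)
Your proposal is correct and follows exactly the route the paper takes: the paper's proof is a one-line citation of Lemma \ref{chap4_lemma12}, Lemma \ref{chap4_lemma13} and the mapping properties (\ref{chap4_mappingProperties2}) and (\ref{chap4_mappingProperties3}), and your composition of the five local symbols is precisely the assembly that citation leaves implicit. You have simply written out in full the bookkeeping that the paper suppresses.
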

\begin{proof}
  The statement of this theorem is a direct consequence of Lemma
  \ref{chap4_lemma12}, Lemma \ref{chap4_lemma13} and the mapping properties
  (\ref{chap4_mappingProperties2}) and (\ref{chap4_mappingProperties3}).
\end{proof}

We now write the initial error $\vec{e}^0 = \vec{u}-\vec{u}^0$ as
\[ \vec{e}^0 = \sum_{\theta_k \in \Theta_L^{\mathrm{low}}} \left[
  \vec{\psi}^L(\theta_k) + \vec{\psi}^L(\gamma(\theta_k)) \right], \]
with $\vec{\psi}^L(\theta_k) + \vec{\psi}^L(\gamma(\theta_k)) \in
\mathcal{E}_L(\theta_k)$ for all $\theta_k \in
\Theta_L^{\mathrm{low}}$, see Lemma \ref{chap4_lemma10}. In view of
Theorem \ref{chap4_theorem4} we can analyze the asymptotic
behavior of the two-grid cycle by simply computing the
largest spectral radius of $\hat{\mathcal{M}}(\theta_k) \in \IC^{2 N_t
  \times 2 N_t}$ with respect to the low frequencies $\theta_k \in
\Theta_L^{\mathrm{low}}$. This motivates
\begin{definition}[Asymptotic two-grid convergence factor]
  For the two-grid iteration matrix $\mathcal{M}_{\tau_L}$, we define the
  asymptotic convergence factor 
  \begin{align*}
    \varrho(\mathcal{M}_{\tau_L}) := \max\left\{
      \varrho\left(\hat{\mathcal{M}}(\theta_k)\right) : \theta_k \in
      \Theta_L^{\mathrm{low}} \right\}.
  \end{align*}
\end{definition}

For the simplest case, i.e. for the polynomial degree $p_t = 0$, we have to
compute the spectral radius of the $2\times 2$ iteration matrix
$\hat{\mathcal{M}}(\theta_k)$. Using one pre and post smoothing step,
i.e. $\nu_1 = \nu_2 = 1$, we find that the spectral radius of
$\hat{\mathcal{M}}(\theta_k) \in \IC^{2 \times 2}$ is
\begin{align*}
\varrho\left(\hat{\mathcal{M}}(\theta_k)\right) = \abs{\frac{4 (1 + \tau_L)^2
    \left(\sin(\theta_k)\right)^2 + \tau_L^2 (1 + 2\tau_L - e^{2 \i
      \theta_k})}{(2 + \tau_L (2 + \tau_L))^2 \left( (1 + 2\tau_L)e^{2 \i
        \theta_k} -1 \right)}}.
\end{align*}
Further calculations show that the maximum of
$\varrho\left(\hat{\mathcal{M}}(\theta_k)\right)$ with respect to the low
frequencies $\theta_k \in \Theta_L^{\mathrm{low}}$  is obtained for
$\theta_k^\ast = \frac{\pi}{2}$. Hence for this simple case we can compute the
asymptotic convergence factor explicitly,
\begin{align*}
   \varrho(\mathcal{M}_{\tau_L}) = \frac{1}{2+2\tau_l + \tau_L^2} \in
   [0,\frac{1}{2}]
   \qquad \text{for all } \tau_L \geq 0.
\end{align*}
For periodic boundary conditions we therefore conclude that the two-grid cycle
converges for any  $\tau_L \geq 0$ to the exact solution, since
$\varrho(\mathcal{M}_{\tau_L}) \leq \frac{1}{2}$ for all $\tau_L \geq
0$. Furthermore, we obtain that the asymptotic convergence factor
$\varrho(\mathcal{M}_{\tau_L})$ gets very small for large time step sizes,
i.e. $\varrho(\mathcal{M}_{\tau_L}) = \mathcal{O}(\tau_L^{-2})$. This results
from the fact that the smoother itself is already an efficient iterative
solver for large time step sizes, see Remark \ref{chap4_remarkSmootherSolver}.

For higher polynomial degrees $p_t$, we have to compute the eigenvalues
of the $2(p_t+1)\times 2(p_t+1)$ iteration matrix
$\hat{\mathcal{M}}(\theta_k)$, which are difficult to obtain in closed
form. We thus compute numerically for all frequencies
$\theta_k \in \Theta_L^{\mathrm{low}}$ the eigenvalues of
$\hat{\mathcal{M}}(\theta_k)$ to determine the asymptotic
convergence factor $\varrho\left(\hat{\mathcal{M}}(\theta_k)\right)$
for a given time step size $\tau_L$.

We show in Figures \ref{chap4PlotODEP0}--\ref{chap4PlotODEP5} the
theoretical asymptotic convergence factors
$\varrho\left(\hat{\mathcal{M}}(\theta_k)\right)$ as solid lines for
$\tau_L \in [10^{-6},10^6]$ and $p_t \in \{0,1,\ldots,5\}$ and three
numbers of smoothing iterations $\nu_1 = \nu_2 = \nu$ with $\nu \in
\{1,2,5\}$. We see that for higher polynomial degrees $p_t \geq 1$ the
theoretical convergence factors are about half the theoretical
convergence factor of the lowest order case $p_t = 0$. We also notice
that the theoretical convergence factors are close to zero for large
time step sizes $\tau_L$, as expected, see Remark
\ref{chap4_remarkSmootherSolver}. Furthermore, for odd polynomial
degrees $p_t$ we observe a peak in the plots for the theoretical
convergence factors. This is because
for odd polynomial degrees, the $(p_t,p_t+1)$ subdiagonal Pad\'{e}
approximation of $e^{-t}$ has exactly one zero for $t > 0$. Hence for
one $\tau_L^\ast > 0$ we have $\alpha(\tau_L^\ast) = 0$ which implies
for the smoothing factor $\mu_S=0$, see Lemma
\ref{chap4_lemma8}. Hence the application of only two smoothing
iterations results in an exact solver.

We also show in the same Figures
\ref{chap4PlotODEP0}--\ref{chap4PlotODEP5}, using dots, triangles and
squares, the numerically computed convergence factors when solving the
equation
\[ \mathcal{L}_{\tau_L} \vec{u} = \vec{f} \]
with our two-grid cycle. We use $N_L = 1024$ time steps with a zero
right hand side, i.e. $\vec{f} = \mathbf{0}$, and a random initial
vector $\vec{u}^0$ with values between zero and one. The numerical 
convergence factor we measure is
\begin{align*}
  \max_{k = 1,\ldots,N_{\mathrm{iter}}}
  \frac{\norm{\vec{r}^{k+1}}_2}{\norm{\vec{r}^{k}}_2}, \quad \text{with
  }\vec{r}^k := \vec{f} - \mathcal{L}_{\tau_L} \vec{u}^k,
\end{align*}
where $N_{\mathrm{iter}} \in \IN$, $N_{\mathrm{iter}} \leq 250$ is the
number of two-grid iterations used until we have reached a given
relative error reduction of $\varepsilon_{\mathrm{MG}}$. To measure
the asymptotic behavior of the two-grid cycle, we have to use quite a
small tolerance $\varepsilon_{\mathrm{MG}} = 10^{-140}$, since in the
pre-asymptotic range the convergence rates of the two-grid cycle are in
fact even better than our asymptotic estimate.
We see that the theoretical results from the Fourier
mode analysis agree very well with the numerical results, even though the
Fourier mode analysis is only rigorous for time periodic
conditions.

 \begin{figure}
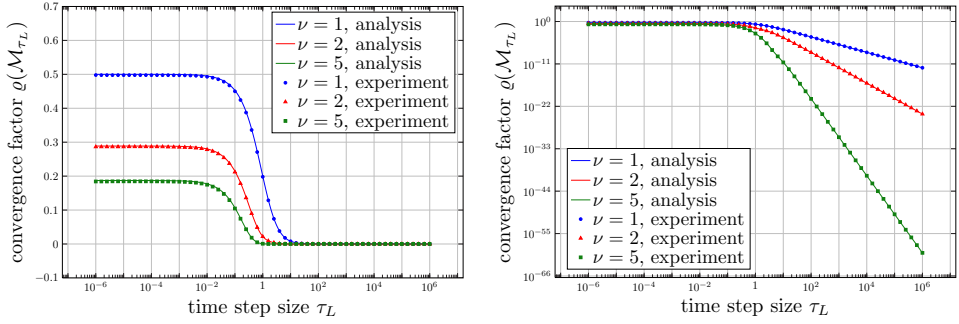

    \centering
    \subfloat
    {
      \scalebox{0.429}{\input{./figures/plots/plotODE0b.tex}}
    }\hfill
    \subfloat
    {
       \scalebox{0.429}{\input{./figures/plots/plotODElog0.tex}}
    }
    \caption{Average convergence factor
      $\varrho\left(\mathcal{M}_{\tau_L}\right)$ for different time step
      sizes $\tau_L$, $p_t = 0$ and numerical convergence rates for $N_t = 1024$
      time steps. Log-linear plot (top) and Log-log plot
      (bottom).}\label{chap4PlotODEP0}
  \end{figure}
  \begin{figure}%[htpb]
    \centering
    \subfloat
    {
      \scalebox{0.429}{\input{./figures/plots/plotODE1.tex}}
    }\hfill
    \subfloat
    {
      \scalebox{0.429}{\input{./figures/plots/plotODElog1.tex}}
    }
    \caption{Average convergence factor
      $\varrho\left(\mathcal{M}_{\tau_L}\right)$ for different time step
      sizes $\tau_L$, $p_t = 1$ and numerical convergence rates for $N_t = 1024$
      time steps. Log-linear plot (top) and Log-log plot
      (bottom). }\label{chap4PlotODEP1}
  \end{figure}
  \begin{figure}%[htpb]
    \centering
    \subfloat
    {
      \scalebox{0.429}{\input{./figures/plots/plotODE2.tex}}
    }\hfill
    \subfloat
    {
      \scalebox{0.429}{\input{./figures/plots/plotODElog2.tex}}
    }
    \caption{Average convergence factor
      $\varrho\left(\mathcal{M}_{\tau_L}\right)$ for different time step
      sizes $\tau_L$, $p_t = 2$ and numerical convergence rates for $N_t = 1024$
      time steps. Log-linear plot (top) and Log-log plot
      (bottom). }\label{chap4PlotODEP2}
  \end{figure}
  \begin{figure}%[htpb]
    \centering
    \subfloat
    {
      \scalebox{0.429}{\input{./figures/plots/plotODE3.tex}}
    }\hfill
    \subfloat
    {
      \scalebox{0.429}{\input{./figures/plots/plotODElog3.tex}}
    }
    \caption{Average convergence factor
      $\varrho\left(\mathcal{M}_{\tau_L}\right)$ for different time step
      sizes $\tau_L$, $p_t = 3$ and numerical convergence rates for $N_t = 1024$
      time steps. Log-linear plot (top) and Log-log plot
      (bottom). }\label{chap4PlotODEP3}
  \end{figure}
  \begin{figure}%[htpb]
    \centering
    \subfloat
    {
      \scalebox{0.429}{\input{./figures/plots/plotODE4.tex}}
    }\hfill
    \subfloat
    {
      \scalebox{0.429}{\input{./figures/plots/plotODElog4.tex}}
    }
    \caption{Average convergence factor
      $\varrho\left(\mathcal{M}_{\tau_L}\right)$ for different time step
      sizes $\tau_L$, $p_t = 4$ and numerical convergence rates for $N_t = 1024$
      time steps. Log-linear plot (top) and Log-log plot
      (bottom). }\label{chap4PlotODEP4}
  \end{figure}
  \begin{figure}
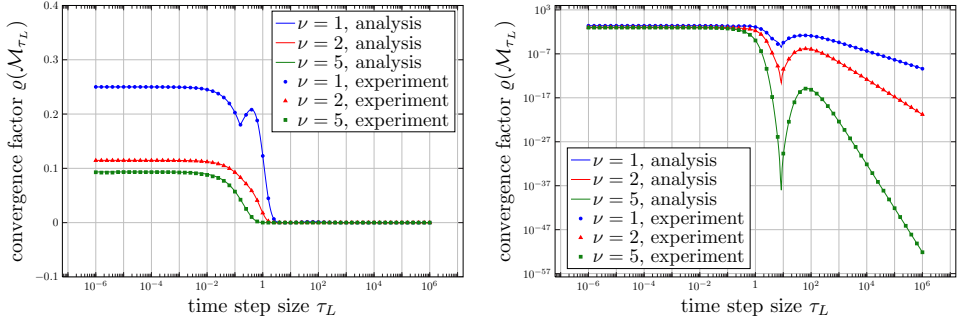
%[htpb]
    \centering
    \subfloat
    {
      \scalebox{0.429}{\input{./figures/plots/plotODE5.tex}}
    }\hfill
    \subfloat
    {
      \scalebox{0.429}{\input{./figures/plots/plotODElog5.tex}}
    }
    \caption{Average convergence factor
      $\varrho\left(\mathcal{M}_{\tau_L}\right)$ for different time step
      sizes $\tau_L$, $p_t = 5$ and numerical convergence rates for $N_t = 1024$
      time steps. Log-linear plot (top) and Log-log plot
      (bottom). }\label{chap4PlotODEP5}
  \end{figure}

%%%%%%%%%%%%%%%%%%%%%%%%%%%%%%%%%%%%%%%%%%%%%%%%%%%%%%%%%%%%%%%%%%%%%%%%%%%%%%%%
%%%%%%%%%%%%%%%%%%%%%%%%%%%%%%%%%%%%%%%%%%%%%%%%%%%%%%%%%%%%%%%%%%%%%%%%%%%%%%%%
\section{Numerical example}\label{NumericalSec}
%%%%%%%%%%%%%%%%%%%%%%%%%%%%%%%%%%%%%%%%%%%%%%%%%%%%%%%%%%%%%%%%%%%%%%%%%%%%%%%%
%%%%%%%%%%%%%%%%%%%%%%%%%%%%%%%%%%%%%%%%%%%%%%%%%%%%%%%%%%%%%%%%%%%%%%%%%%%%%%%%

In this example we test the weak and strong scaling behavior of our
new time multigrid algorithm. We use different polynomial
degrees $p_t \in \{ 0,1,5,10,20 \}$ and a fixed time step size $\tau =
10^{-6}$.  For a random initial guess and a zero right hand side we
run the algorithm until we have reached a relative error reduction of
$\varepsilon_{\mathrm{MG}} = 10^{-8}$.  We first study the
weak scaling behavior by using a fixed number of time
steps per core ($32\; 768$), and we increase the number
of cores when increasing the number of time steps.  In
Table \ref{chap4_TableWeak}, we give computation times for different
numbers of cores and  polynomial degrees. We observe
excellent weak scaling, i.e. the computation times remain
bounded when we increase the number of cores.
We next study the strong scaling behavior by fixing the problem size,
i.e. we use $1\;048\;576$ time steps in this example. Then we increase
the number of cores from $1$ up to $32\;768$. In Table
\ref{chap4_TableStrong} the computation times are given for different
number of cores and polynomial degrees. We observe that the
computation costs are basically divided by a factor of
two if we double the number of cores, only for $32\;768$ cores and
$p_t \in \{0,1,5\}$ we obtain no speedup any more, since the local
problems are to small, i.e. for $p_t = 0$ one core has to solve for
only $32$ unknowns.

These computations were performed on the Monte Rosa supercomputer at 
the Swiss National Supercomputing Centre CSCS in Lugano.

\begin{table}
  \centering
    \subfloat[Weak scaling results.]
    {
      \scalebox{0.9}{
      \begin{tabular}{r|r|r|r|r|r|r}
        \thickhline
        cores & time steps & $p_t=0$ & $p_t =1$ & $p_t=5$ & $p_t=10$ & $p_t=20$ \\ \hline
        $1$ & $32\;768$ & $3.96$ & $2.45$ & $3.60$ & $7.97$ & $17.77$ \\
        $2$ & $65\;536$ & $5.30$ & $3.39$ & $5.10$ & $10.74$ & $25.83$ \\
        $4$ & $131\;072$ & $5.34$ & $3.38$ & $5.45$ & $10.84$ & $26.13$ \\
        $8$ & $262\;144$ & $5.95$ & $3.77$ & $5.75$ & $11.70$ & $28.54$ \\
        $16$ & $524\;288$ & $5.94$ & $3.77$ & $5.78$ & $11.69$ & $28.61$ \\
        $32$ & $1\;048\;576$ & $5.96$ & $3.80$ & $5.87$ & $11.73$ & $27.21$ \\
        $64$ & $2\;097\;152$ & $7.92$ & $4.73$ & $6.68$ & $12.62$ & $27.96$ \\
        $128$ & $4\;194\;304$ & $8.03$ & $4.74$ & $6.66$ & $12.76$ & $28.52$ \\
        $256$ & $8\;388\;608$ & $8.09$ & $4.91$ & $6.80$ & $12.90$ & $28.29$ \\
        $512$ & $16\;777\;216$ & $8.06$ & $4.79$ & $6.75$ & $12.82$ & $29.13$ \\
        $1\;024$ & $33\;554\;432$ & $7.96$ & $4.75$ & $6.69$ & $13.01$ & $28.89$ \\
        $2\;048$ & $67\;108\;864$ & $8.06$ & $4.79$ & $6.73$ & $13.02$ & $28.86$ \\
        $4\;096$ & $134\;217\;728$ & $8.14$ & $4.80$ & $6.77$ & $12.77$ & $29.18$ \\
        $8\;192$ & $268\;435\;456$ & $8.14$ & $4.89$ & $6.84$ & $13.03$ & $29.23$ \\
        $16\;384$ & $536\;870\;912$ & $8.10$ & $4.80$ & $6.82$ & $13.25$ & $29.52$ \\
        $32\;768$ & $1\;073\;741\;824$ & $8.21$ & $4.94$ & $6.90$ & $13.19$ & $29.03$ \\ \thickhline
      \end{tabular}
      }\label{chap4_TableWeak}
    }\hfill
    \subfloat[Strong scaling results.]
    {
      \scalebox{0.9}{\begin{tabular}{r|r|r|r|r|r|r}
        \thickhline
        cores & time steps & $p_t=0$ & $p_t =1$ & $p_t=5$ & $p_t=10$ & $p_t=20$ \\ \hline
        $1$ & $1\;048\;576$ & $129.11$ & $78.79$ & $117.99$ & $254.95$ & $535.43$ \\
        $2$ & $1\;048\;576$ & $85.66$ & $54.69$ & $82.36$ & $172.46$ & $396.71$ \\
        $4$ & $1\;048\;576$ & $42.90$ & $27.22$ & $41.17$ & $86.81$ & $199.71$ \\
        $8$ & $1\;048\;576$ & $23.83$ & $15.08$ & $22.90$ & $46.87$ & $107.77$ \\
        $16$ & $1\;048\;576$ & $11.91$ & $7.57$ & $11.50$ & $23.40$ & $54.06$ \\
        $32$ & $1\;048\;576$ & $5.96$ & $3.80$ & $5.87$ & $11.73$ & $27.21$ \\
        $64$ & $1\;048\;576$ & $3.98$ & $2.45$ & $3.31$ & $6.50$ & $13.70$ \\
        $128$ & $1\;048\;576$ & $1.97$ & $1.18$ & $1.67$ & $3.23$ & $6.97$ \\
        $256$ & $1\;048\;576$ & $0.984$ & $0.598$ & $0.808$ & $1.57$ & $3.49$ \\
        $512$ & $1\;048\;576$ & $0.508$ & $0.299$ & $0.407$ & $0.787$ & $1.77$ \\
        $1\;024$ & $1\;048\;576$ & $0.264$ & $0.155$ & $0.210$ & $0.444$ & $0.904$ \\
        $2\;048$ & $1\;048\;576$ & $0.146$ & $0.0864$ & $0.114$ & $0.210$ & $0.465$ \\
        $4\;096$ & $1\;048\;576$ & $0.0861$ & $0.0506$ & $0.0653$ & $0.116$ & $0.243$ \\
        $8\;192$ & $1\;048\;576$ & $0.0548$ & $0.0329$ & $0.0405$ & $0.0743$ & $0.129$ \\
        $16\;384$ & $1\;048\;576$ & $0.0400$ & $0.0230$ & $0.0272$ & $0.0424$ & $0.0767$ \\
        $32\;768$ & $1\;048\;576$ & $0.0511$ & $0.0241$ & $0.0288$ & $0.0376$ & $0.0608$ \\ \thickhline
      \end{tabular}
      }\label{chap4_TableStrong}
    }
    \caption{Scaling results with solving times in seconds.}
\end{table}

%%%%%%%%%%%%%%%%%%%%%%%%%%%%%%%%%%%%%%%%%%%%%%%%%%%%%%%%%%%%%%%%%%%%%%%%%%%%%%%%
%%%%%%%%%%%%%%%%%%%%%%%%%%%%%%%%%%%%%%%%%%%%%%%%%%%%%%%%%%%%%%%%%%%%%%%%%%%%%%%%
\section{Conclusions}\label{ConclusionSec}

We focused in this paper on the analysis of the multigrid method in
time, and our model problem did not contain an operator in space. To
fully leverage the speedup, we consider now the time dependent heat
equation $\partial_t u=\Delta u+f$. Applying our time multigrid
algorithm requires now in each step of the block Jacobi smoother the
solution of Laplace like problems, which we do by just applying one
% \marginpar{Den Vergleich von forward substitution und ST-multigrid habe
% ich auf meinem lokalen Rechner gerechnet und die parallelen Rechnungen auf VSC2)
% - insgesamt sollten aber die Zahlen änliche sein wenn man sie hoch skaliert}
V-cycle of spatial multi-grid. Doing so, we obtain a
  space-time parallel method which takes on one processor for a
  problem of size $131\;120\;896$ a solution time of $10416.90$
  seconds, which is about the same as for forward substitution which
  took $9970.89$ seconds, but which can run in parallel on 2048 cores
  in about $10$ seconds, about one thousand times faster than using
  forward substitution. These results have been computed on the
  Vienna Scientific Cluster VSC-2.
  The precise analysis of this space-time
multigrid algorithm builds on the results we presented in this paper,
but requires techniques for the spatial part that will appear
elsewhere.

%% \begin{table}
%%   \begin{center}
%%       \begin{tabular}{r|r|r|c|r}
%%         \thickhline
%%         cores & time steps & \multicolumn{1}{c|}{dof} & iter & time \\ \hline
%%         $1$ & $4\;096$ & $61\;202\;432$ & $9$ & $6\;960.7$ \\
%%         $2$ & $4\;096$ & $61\;202\;432$ & $9$ & $3\;964.8$ \\
%%         $4$ & $4\;096$ & $61\;202\;432$ & $9$ & $2\;106.2$ \\
%%         $8$ & $4\;096$ & $61\;202\;432$ & $9$ & $1\;056.0$ \\
%%         $16$ & $4\;096$ & $61\;202\;432$ & $9$ & $530.4$ \\
%%         $32$ & $4\;096$ & $61\;202\;432$ & $9$ & $269.5$ \\
%%         $64$ & $4\;096$ & $61\;202\;432$ & $9$ & $135.2$ \\
%%         $128$ & $4\;096$ & $61\;202\;432$ & $9$ & $68.2$ \\
%%         $256$ & $4\;096$ & $61\;202\;432$ & $9$ & $34.7$ \\
%%         $512$ & $4\;096$ & $61\;202\;432$ & $9$ & $17.9$ \\
%%         $1\;024$ & $4\;096$ & $61\;202\;432$ & $9$ & $9.4$ \\
%%         $2\;048$ & $4\;096$ & $61\;202\;432$ & $9$ & $5.4$ \\ \thickhline
%%       \end{tabular}
%%   \end{center}
%%   \caption{Strong scaling results for the heat equation with solving
%%            times in $[s]$.}\label{heatEquStrongScaling}
%% \end{table}
%%%%%%%%%%%%%%%%%%%%%%%%%%%%%%%%%%%%%%%%%%%%%%%%%%%%%%%%%%%%%%%%%%%%%%%%%%%%%%%%
%%%%%%%%%%%%%%%%%%%%%%%%%%%%%%%%%%%%%%%%%%%%%%%%%%%%%%%%%%%%%%%%%%%%%%%%%%%%%%%%

%%%%%%%%%%%%%%%%%%%%%%%%%%%%%%%%%%%%%%%%%%%%%%%%%%%%%%%%%%%%%%%%%%%%%%%%%%%%%%%%
%%%%%%%%%%%%%%%%%%%%%%%%%%%%%%%%%%%%%%%%%%%%%%%%%%%%%%%%%%%%%%%%%%%%%%%%%%%%%%%%
\section*{Acknowledgments}

We thank Ernst Hairer for his help with Theorem \ref{chap4_theorem1a},
and Rolf Krause and Daniel Ruprecht for the simulations we were
allowed to perform on the Monte Rosa supercomputer in Manno.

%%%%%%%%%%%%%%%%%%%%%%%%%%%%%%%%%%%%%%%%%%%%%%%%%%%%%%%%%%%%%%%%%%%%%%%%%%%%%%%%
%%%%%%%%%%%%%%%%%%%%%%%%%%%%%%%%%%%%%%%%%%%%%%%%%%%%%%%%%%%%%%%%%%%%%%%%%%%%%%%%

\bibliographystyle{abbrv}
\bibliography{bibliography}

\end{document}